\newcommand{\res}{\mathrm{res}}
\newcommand{\qekv}{\quad \Longleftrightarrow \quad}
\newcommand{\dil}{\mathrm{dil}}
\newcommand{\?}{\; \! \!}
\def\uphar{{\upharpoonright}}
\def\sD{{\mathfrak D}}
      \def\dC{{\mathbb C}}
\def\dD{{\mathbb D}}
   \def\dN{{\mathbb N}}   
   \def\dT{{\mathbb T}}
\def\cD{{\mathcal D}}      
   \def\cH{{\mathcal H}}   
   \def\cK{{\mathcal K}}   \def\cL{{\mathcal L}}
   \def\cN{{\mathcal N}}   
      \def\cU{{\mathcal U}}
      \def\cX{{\mathcal X}}
\def\cY{{\mathcal Y}}
\def\bS{{\mathbf S}}
\def\d1{{\mathcal D}}
\def\bm\chi{\mbox{\boldmath$\chi$}}
\def\ran{{\rm ran\,}}
\let\xker=\ker \def\ker{{\xker\,}}
\def\cspan{{\rm \overline{span}\, }}
\def\d1{{\mathcal D}}
\def\SK{{\bS_{\kappa}( \cU,\cY)}}
\def\So{{\bS( \cU,\cY)}}
\def\T{{\T_{\Sigma}}}
\newtheorem{theorem}{Theorem}[section]
\newtheorem{lemma}[theorem]{Lemma}
\newtheorem{proposition}[theorem]{Proposition}
\numberwithin{equation}{section}
\theoremstyle{definition}
\newtheorem{definition}[theorem]{Definition}
\newtheorem{remark}[theorem]{Remark}
\newtheorem{example}[theorem]{Example}
\begin{document}

\title
{Passive
discrete-time systems with a Pontryagin state space}
\author{L. Lilleberg}
\address{Department of Mathematics and Statistics,
University of Vaasa, P.O. Box 700, 65101 Vaasa,
Finland}
\email{lassi.lilleberg@uva.fi}


\keywords{Operator colligation, Pontryagin space contraction, passive discrete-time system, transfer function, generalized Schur class}

\subjclass[2010]{Primary: 47A48, 47A57,	47B50; Secondary 93B05, 93B07. }
\date{}
\begin{abstract}
Passive discrete-time systems 
with  Hilbert spaces 
 as an incoming and outgoing space and a Pontryagin space 
 as a state space are investigated. 
A geometric characterization when the index of the transfer function coincides with the negative index of the state space is given. In this case, an isometric (co-isometric) system has a product representation corresponding to the left (right) Kre\u{\i}n--Langer factorization of the transfer function.   A new criterion, based on the inclusion of reproducing kernel spaces, when a product of  two isometric (co-isometric) systems preserves controllability (observability), is obtained. The concept of the defect function is expanded for generalized Schur functions, and realizations of generalized Schur functions with zero defect functions are studied.
\end{abstract}

\maketitle


\section{Introduction}\label{intro}

Let $\mathcal{U}$ and $\mathcal{Y}$ be separable Hilbert spaces. The \textbf{generalized Schur class} $\bS_{\kappa}( \cU,\cY)$ consists of
 $\mathcal{L}(\cU,\cY)$-valued functions $S(z)$ which are meromorphic in the unit disc $\dD$
 and holomorphic in a neighbourhood $\Omega$ of the origin such that the Schur kernel \begin{equation}\label{kernel1}
K_S(w,z)=\frac{1-S(z)S^*(w)}{1-z\bar{w}}, \qquad w,z \in \Omega,
\end{equation} has $\kappa$ negative squares ($\kappa=0,1,2,\ldots$). This means that for any finite set of points $w_1,\ldots,w_n$
 in the domain of holomorphy $\rho(S)\subset \dD$ of $S$ and vectors $f_1,\ldots,f_n \subset \cY,$ the Hermitian matrix
\begin{equation}\label{kernelmatrix1}
\left(\left\langle K_S(w_j,w_i)f_j,f_i \right\rangle\right)_{i,j=1}^{n}
\end{equation} has at most $\kappa$ negative eigenvalues, and there exists at least one such matrix
 that has exactly $\kappa$ negative eigenvalues. It is known from the reproducing kernel theory  \cite{ADRS}, \cite{Aron}, \cite{rovdrit}, \cite{LangSor},\cite{Sorjonen} that the kernel \eqref{kernel1} generates the reproducing kernel  Pontryagin space $\cH(S)$ with negative index $\kappa.$ The spaces  $\cH(S)$ are called \textbf{generalized de Branges--Rovnyak spaces}, and the elements in $\cH(S)$   are functions defined on $\rho(S)$ with values in $\cY.$   The notation $S^*(z)$ means $(S(z))^*,$ a function ${S}^{\#}(z)$ is defined to be $S^*(\bar z)$ and ${S}^{\#} \in \bS_{\kappa}( \cY,\cU)$ whenever $S \in \SK$ \cite[Theorem 2.5.2]{ADRS}.

 The class $\bS_{0}( \cU,\cY)$ is written as $ \So$ and it coincides with the \textbf{Schur class}, that is, functions holomorphic and bounded by one in $\dD$.  The results first obtained
     by Kre\u{\i}n and Langer \cite{Krein-Langer}, see also {\cite[\S4.2]{ADRS}}  and \cite{DLS1}, show that $S \in \SK$ has \textbf{Kre\u{\i}n--Langer factorizations} of the form $S=S_rB_r^{-1}=B_l^{-1}S_l,$ where $S_r,S_l \in \bS_{0}( \cU,\cY). $ The functions   $B_r^{-1}$ and  $B_l^{-1}$ are inverse Blaschke products, and they have unitary values everywhere on the unit circle $\dT.$ It follows from these factorizations that many properties of the functions in the Schur class  $\So$ hold also for the generalized Schur functions.

  The properties of the generalized Schur functions can be studied by using operator colligations and transfer function realizations.   An \textbf{operator colligation} $\Sigma=(T_{\Sigma};\cX,\cU,\cY;\kappa)$
   consists of a Pontryagin space $\cX$ with the negative index $\kappa$ (\textbf{state space}),
   Hilbert spaces $\cU$ (\textbf{incoming space}), and $\cY$ (\textbf{outgoing space})
   and a \textbf{system operator} $T_{\Sigma}\in \cL(\cX\oplus\cU,\cX\oplus\cY).$ The operator $T_{\Sigma}$
    can be written in the block form
\begin{equation}\label{colli}
T_{\Sigma}=\begin{pmatrix} A&B \cr C&D\end{pmatrix} :
\begin{pmatrix} \cX \\ \cU \end{pmatrix} \to
\begin{pmatrix} \cX \\ \cY \end{pmatrix},
\end{equation} where $A \in \cL(\cX)$ (\textbf{main operator}), $B \in \cL(\cU,\cX)$ (\textbf{control operator}),
 $C \in \cL(\cX,\cY)$ (\textbf{observation
operator}), and $D \in \cL(\cU,\cY)$ (\textbf{feedthrough operator}). Sometimes  the colligation is  written as
 $\Sigma=(A,B,C,D;\cX,\cU,\cY;\kappa).$ It is possible to allow all spaces to be Pontryagin or even Kre\u{\i}n spaces,
 but colligations with only the state space $\cX$ allowed to be a Pontryagin space will be considered in this paper.
 The colligation generated by \eqref{colli} is also called a \textbf{system}
 since it can be seen as a \textbf{linear discrete-time system}
of the form
\begin{equation*}
\label{system}
\begin{cases}
 h_{k+1} &=Ah_k+B\xi_k, \\
  \sigma_{k} &=Ch_k+D\xi_k,
\end{cases}\quad k\geq 0,
\end{equation*}
where $\{h_k\}\subset \cX$, $\{\xi_k\}\subset \cU$ and
$\{\sigma_k\}\subset \cY.$ In what follows, "system" always refers to \eqref{colli}, since other kind of systems are not considered.

  When the system operator $T_{\Sigma}$ in \eqref{colli}
is  a contraction, the corresponding  system is called
\textbf{passive}. If  $T_{\Sigma}$ is isometric
(co-isometric, unitary), then the corresponding system is called isometric
(co-isometric, conservative).  The \textbf{transfer function }of the system \eqref{colli}
is defined by
\begin{equation}
\label{trans} \theta_\Sigma(z):=D+z C(I-z
A)^{-1}B,
\end{equation} whenever $I-z
A$ is invertible. Especially, $\theta$ is defined and holomorphic in a neighbourhood of the origin. The values $\theta_{\Sigma}(z)$ are bounded operators from $\cU$ to $\cY.$
The \textbf{adjoint} or \textbf{dual system} is $\Sigma^*=(T_{\Sigma}^*;\cX,\cY,\cU;\kappa)$ and one has $ \theta_{\Sigma^*}(z)={\theta_{\Sigma}}^\#(z).  $
 Since  contractions between  Pontryagin spaces with the same negative indices are bi-contractions, $\Sigma^*$ is passive whenever $\Sigma$ is.
  If $\theta$ is an $\cL(\cU,\cY)$-valued function and $\theta_{\Sigma}(z)=\theta(z)$ in a neighbourhood of the origin,
then the system $\Sigma$ is called a \textbf{realization} of $\theta.$  A \textbf{realization problem} for the function $\theta \in \SK$ is to find a system $\Sigma$ with a certain minimality property (controllable,  observable, simple, minimal); for details, see Theorem \ref{realz}, such that $\Sigma$ is a realization of $\theta.$

  If $\kappa=0$,
%
%
 the system  reduces to the standard Hilbert space setting of  the
  passive systems studied, for instance, by
de Branges and Rovnyak \cite{BrR1,BrR2},
Ando \cite{Ando},
Sz.-Nagy and Foias \cite{SF},
  Helton \cite{Helton},
   Brodski\u{\i} \cite{Br1},
    Arov \cite{A,Arov} and
     Arov et al \cite{ArKaaP,ArKaaP3,ArNu1,ArNu2,ArSt}.
The theory has been extended to Pontryagin state space case  by Dijksma et al \cite{DLS1,DLS2}, Saprikin \cite{Saprikin1},
Saprikin and Arov  \cite{SapAr} and Saprikin et al  \cite{Saprikinetal}. Especially, in \cite{Saprikin1}, Arov's well-known results of minimal and optimal minimal systems are generalized to the Pontryagin state space settings. Part of those results  are used in \cite{Saprikinetal}, where transfer functions, Kre\u{\i}n--Langer factorizations, and the corresponding product representation of system are studied and, moreover, the connection between bi-inner transfer functions  and systems with bi-stable main operators are generalized to the Pontryagin state space settings. In this paper those results will be further expanded and improved.

 The case when all the spaces are indefinite, the theory of isometric, co-isometric and conservative systems is  considered, for instance,  in \cite{ADRS}, see also \cite{rovdrit}. 
 The indefinite reproducing kernel spaces 
 were first studied by Schwartz in  \cite{Schwartz} and Sorjonen in \cite{Sorjonen}.

The paper is organized as follows.
In Section \ref{sec-pont}, basic notations and definitions about the indefinite spaces and their operators are given. Also,  the  left and right Kre\u{\i}n--Langer factorizations are formulated, and the  boundary value properties of generalized Schur functions are introduced.  After that, basic properties  of linear discrete time systems, or operator colligations, especially in Pontryagin state space, are recalled without proofs. However, the extension of Arov's result about the weak similarity between two minimal passive realizations of the same transfer function,
is given with a proof.

 Section \ref{sec-products} deals mainly with the dilations, embeddings and products of two systems.
  The transfer function $\theta_\Sigma$ of the passive system $\Sigma=(T_{\Sigma};\cX,\cU,\cY;\kappa)$ is a generalized Schur function with negative index no larger than the negative index of the state space $\cX$, but the theory of passive systems will often be meaningful only if the indices are equal. A simple geometric criterion for these indices to coincides is given in Lemma \ref{simp-kar}. Main results in this section contain
   criteria when the product of two co-isometric  (isometric) systems preserves observability (controllability). These results are obtained  in Theorems \ref{obspre} and \ref{contpre}). The criteria involve the reproducing kernel spaces induced by the generalized Schur functions.  Moreover, Theorem \ref{reps1} expands the results of \cite{Saprikinetal} about the realizations of generalized Schur functions and their product representations  corresponding to the Kre\u{\i}n--Langer factorizations. In the end of Section \ref{sec-products}, it is obtained  that if $A$ is the main operator of $\Sigma=(T_{\Sigma};\cX,\cU,\cY;\kappa)$ such that $\theta_\Sigma \in \SK,$ then there exist unique fundamental decompositions
 $\cX= \cX_1^+ \oplus \cX_1^-= \cX_2^{+} \oplus \cX_2^{-}$ such that $A\cX_1^+ \subset \cX_1^+$ and $A\cX_2^{-}\subset \cX_2^{-},$ respectively; see Proposition \ref{invariants}.

   Section \ref{sec-defect} expands and   generalizes the results of \cite{Arov} and \cite{Saprikinetal} about the realizations of bi-inner functions. It will be shown that the notions of stability and co-stability can be generalized to the Pontryagin state space settings in  a similar manner as bi-stability is generalized in \cite{Saprikinetal}. Moreover, the results of \cite{AHS2} about the realizations of ordinary Schur functions with zero defect functions will be generalized. This yields a class of generalized Schur functions with boundary value properties very close to those of inner functions in a certain sense.

\section{Pontryagin spaces, Kre\u{\i}n-Langer factorizations and linear systems}\label{sec-pont}
Let $\cX$ be a complex vector space  with a Hermitian indefinite inner product $\langle \cdot , \cdot \rangle_{\cX}.$ The anti-space of $\cX$ is the space $-\cX$ that coinsides with  $\cX$ as a vector  space but its inner product is $- \langle \cdot , \cdot \rangle_{\cX}.$ Notions of orthogonality and orthogonal direct sum are defined as in the case of Hilbert spaces, and $\cX \oplus \cY$ is often denoted by $\begin{pmatrix}
                                                                                                                                                                                     \cX &
                                                                                                                                                                                     \cY
                                                                                                                                                                                   \end{pmatrix}^\intercal.$
                                                                                                                                                                                   Space $\cX$ is said to be a \textbf{Kre\u{\i}n space} if it admits a decomposition
$\cX=\cX^{+} \oplus \cX^{-}$ where $(\cX^{\pm},\pm \langle \cdot , \cdot \rangle_{\cX})$ are Hilbert spaces. Such a  decomposition is called a \textbf{fundamental decomposition}. In general, it is not unique. However, a fundamental decomposition determines the Hilbert space $|\cX |=\cX^{+} \oplus \left(-\cX^{-}\right)$ with the strong topology which does not depend on the choice of the fundamental decomposition.
 The dimensions of $\cX^+$ and $\cX^-,$ which are also independent of the choice of the fundamental decomposition,  are called the \textbf{positive }and  \textbf{negative indices} $\mathrm{ind}_{\pm}\,\cX= \mathrm{dim} \,\cX^{\pm}
$ of $\cX.$   In what follows, all notions of continuity and convergence are understood
to be with respect to the strong topology. All spaces are assumed to be separable.
A linear manifold $\cN \subset \cX$ is  a \textbf{regular subspace}, if it is itself a Kre\u{\i}n space with the inherited   inner product of $\langle \cdot , \cdot \rangle_{\cX}.$ A \textbf{Hilbert subspace} is a regular subspace such that its negative index is zero, and a uniformly negative subspace is a regular subspace with positive index zero, i.e., an \textbf{anti-Hilbert space.}
 If $\cN \subset \cX$ is a regular subspace, then  $\cX=\cN \oplus \cN^\perp,$ where $\perp$ refers to orthogonality w.r.t.  
  indefinite inner product $\langle \cdot , \cdot \rangle_{\cX}.$ Observe that $\cN$ is regular precisely when $\cN^\perp$ is regular.

 Denote by  $\cL(\cX,\cY)$ the space of all continuous linear operators from the Kre\u{\i}n space $\cX$ to the Kre\u{\i}n space $\cY.$ Moreover,
 $\cL(\cX)$ stands for  $\cL(\cX,\cX)$. Domain of a linear operator $T$ is denoted by $\cD(T),$ kernel by $\ker T$ and $T\uphar_{\cN}$ is a restriction of $T$ to the linear manifold $\cN.$ The   adjoint of $A\in \cL(\cX,\cY)$ is an operator $A^* \in \cL(\cY,\cX) $ such that $\langle Ax,y \rangle_{\cY} =\langle x,A^*y \rangle_{\cX} $ for all $x \in \cX$ and $y \in \cY.$ Classes of invertible, self-adjoint, isometric, co-isometric and unitary operators are defined as for Hilbert spaces, but with respect to the indefinite inner product. For self-adjoint  operators $A, B\in \cL(\cX,\cY), $ the inequality $A\leq B$ means  that $\langle Ax,x \rangle \leq \langle Bx,x  \rangle$ for all $x \in \cX$.  A self-adjoint operator $P \in\cL(\cX)$ is an $\langle \cdot , \cdot \rangle$-\textbf{orthogonal  projection} if  $P^2=P$. The unique orthogonal projection onto a regular subspace $\cN$ of $\cX$ exists and is denoted by $P_{\cN}$.
 A \textbf{ Pontryagin space} is a Kre\u{\i}n space $\cX$ such that $\mathrm{ind}_{-}\,\cX<\infty.$  A linear operator $A\in \cL(\cX,\cY)$  is a \textbf{contraction} if $\langle Ax,Ax\rangle \leq \langle x,x\rangle$ for all $x \in \cX.$ If $\cX$ and $\cY$ are Pontryagin spaces with the same negative index, then the adjoint of a contraction $A\in \cL(\cX,\cY)$ is still a contraction, i.e., $A$ is a \textbf{bi-contraction}. The identity operator of the space $\cX$ is denoted by $I_{\cX}$ or just by $I$ when the corresponding space is clear from the context. For further information about the indefinite spaces and their operators, we refer to \cite{Azizov}, \cite{Bognar} and \cite{rovdrit}.

  For  ordinary Schur class $ \So,$ 
  it is well known \cite{SF} that $S \in \So $ has non-tangential strong limit values  almost everywhere (a.e.) on the unit circle $\dT.$
   It follows that $S \in \So$ can be extended to
   $L^{\infty}(\cU,\cY)$ function, that is,
    the class of weakly measurable a.e. defined and essentially bounded $\mathcal{L}(\cU,\cY)$-valued functions on $\dT.$
     Moreover, $S(\zeta)$ is contractive a.e. on $\dT.$ If $S \in \So$ has isometric (co-isometric, unitary) boundary values a.e. on $\dT,$ then $S$ is said to be \textbf{inner} (\textbf{co-inner}, \textbf{bi-inner}).

 If $\cU=\cY,$ then the notations $\mathbf{S}(\cU)$ and $\mathbf{S}_{\kappa}(\cU)$ are often used instead of $\mathbf{S}(\cU,\cU)$ and $\mathbf{S}_{\kappa}(\cU,\cU)$.
   Suppose that $P \in \cL(\cU)$ is an orthogonal projection from the Hilbert space $\cU$ to an arbitrary one dimensional subspace. Then a function defined by
  \begin{equation}\label{bla-fo-fact}
    b(z)=I-P +\rho \,\frac{z-\alpha}{1-\bar \alpha z}\,P, \qquad |\rho| =1,  \qquad 0<|\alpha|<1,
  \end{equation} is a \textbf{simple Blaschke-Potapov factor.}
  Easy calculations show that $b$ is holomorphic in the closed unit disc $\overline{\dD},$
  it has unitary values everywhere on $\dT$ and $b(z)$ is invertible whenever $z \in \overline \dD \setminus \{\alpha\}.$
In particular,  $b \in \mathbf{S}_0(\mathcal U)$ 
 is bi-inner.
  A finite product
  \begin{equation}\label{bla-pro}
  B(z)= \prod_{k=1}^{n}\left( I-P_k +\rho_k \frac{z-\alpha_k}{1-\bar \alpha_k z}P_k\right),  \qquad |\rho_{k}| =1,  \qquad 0<|\alpha_{k}|<1,
  \end{equation} of simple  Blaschke-Potapov factors  is called \textbf{ Blaschke product of degree $n$,}
   and it is also bi-inner and invertible on $\overline \dD \setminus \{\alpha_1,\ldots, \alpha_n\}.$
 The  following factorization theorem was first obtained
     by Kre\u{\i}n and Langer \cite{Krein-Langer}, see also {\cite[\S4.2]{ADRS}}  and \cite{DLS1}.
  \begin{theorem}\label{krein-langer-fact}
    Suppose $S \in \SK.$ Then
    \begin{equation}\label{right-krein}
      S(z)=S_r(z)B^{-1}_r(z)
    \end{equation} where $S_r \in \So$ and $B_r$ is a Blaschke product of degree $\kappa$
     with values in $\mathcal{L}(\mathcal U) $ such that $B_r(w)f=0$ and $S_r(w)f=0$ for some $w \in \dD$ only if $f=0.$
     Moreover,
    \begin{equation}\label{le-kre}
      S(z)=B^{-1}_l(z)S_l(z)
    \end{equation} where $S_l \in \So$ and $B_l$ is a Blaschke product of degree $\kappa$
     with values in $\mathcal{L}(\mathcal Y)$ such that $B_l^*(w)g=0$ and $S_l^*(w)g=0$ for some $w \in \dD$
    only if $g=0.$

    Conversely, any function of the form \eqref{right-krein}
      or \eqref{le-kre} belongs to $\mathbf{S}_{\kappa'}$ for some $\kappa'\leq \kappa, $
       and $\kappa'=\kappa$ exactly when the functions have no common zeros in sense as  described above.
        Both factorizations are unique up to unitary constant factors.
  \end{theorem}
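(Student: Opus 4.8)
The plan is to prove the right factorization \eqref{right-krein} by induction on $\kappa$, to deduce the left factorization \eqref{le-kre} from it by passing to ${S}^{\#}$, and to settle the converse and the uniqueness by means of a single kernel identity. Throughout I would write $\nu_-(K)$ for the number of negative squares of a Hermitian kernel $K$, so that $S \in \SK$ means $\nu_-(K_S)=\kappa$. The function-theoretic input to record first is that a function whose Schur kernel has finitely many negative squares is meromorphic in $\dD$ with only finitely many poles there, their total multiplicity being controlled by $\kappa$; in particular, if $\kappa\geq 1$ then $S$ cannot be holomorphic and contractive throughout $\dD$, so it has at least one pole $\alpha \in \dD$.

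The induction rests on the following reduction lemma: if $S \in \SK$ with $\kappa\geq 1$, there is a simple Blaschke--Potapov factor $b$ with values in $\cL(\cU)$, built from a rank-one projection $P$ and the point $\alpha$ as in \eqref{bla-fo-fact}, such that $Sb$ is holomorphic at $\alpha$ and $\nu_-(K_{Sb})=\kappa-1$. Granting this, the base case $\kappa=0$ is trivial (take $B_r=I$, $S_r=S$), and in the inductive step one applies the induction hypothesis to $Sb \in \bS_{\kappa-1}(\cU,\cY)$ to write $Sb=S_r B'^{-1}$ with $\deg B'=\kappa-1$; then $S=S_r(bB')^{-1}$ exhibits \eqref{right-krein} with $B_r=bB'$ of degree $\kappa$.

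To prove the reduction lemma I would analyse the pole of $S$ at $\alpha$: its principal part selects a direction in $\cU$ along which to place the rank-one projection $P$, and the phase $\rho$ is chosen so that the zero of $b$ at $\alpha$ cancels the pole of $S$ along that direction, making $Sb$ holomorphic at $\alpha$. That the negative index drops by \emph{exactly} one is the crux, and this is the hard part. Since $b$ is inner, the identity
\[
K_{Sb}(w,z)=K_S(w,z)+S(z)\,K_b(w,z)\,S^*(w)
\]
with $K_b\geq 0$ shows $\nu_-(K_{Sb})\leq\nu_-(K_S)=\kappa$, so the real work is to verify that the local cancellation of one pole removes precisely one negative direction, giving $\nu_-(K_{Sb})=\kappa-1$; this demands careful bookkeeping combining the finite pole structure with the kernel identity. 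Iterating, the requirement that $B_r$ and $S_r$ have no common zeros in the stated sense is exactly what forces each factor to strip a genuine negative square, so that $\deg B_r=\kappa$: a common zero would permit a further cancellation, lowering the degree and contradicting $\nu_-(K_S)=\kappa$.

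The left factorization \eqref{le-kre} then follows by applying the right factorization to ${S}^{\#}\in\bS_\kappa(\cY,\cU)$: writing ${S}^{\#}=\widetilde{S}_r\,\widetilde{B}_r^{-1}$ and applying $\#$ again gives $S=(\widetilde{B}_r^{\#})^{-1}\widetilde{S}_r^{\#}$, so $B_l=\widetilde{B}_r^{\#}$ is an $\cL(\cY)$-valued Blaschke product of degree $\kappa$ and $S_l=\widetilde{S}_r^{\#}\in\So$, with the right-coprimeness of the factors of ${S}^{\#}$ transposing to the stated left-coprimeness. For the converse, starting from $S=S_rB_r^{-1}$ with $S_r\in\So$ and $B_r$ a Blaschke product of degree $\kappa'$, the same identity applied to $S_r=SB_r$ yields $K_S=K_{S_r}-S\,K_{B_r}S^*$ with $K_{S_r}\geq0$ and $K_{B_r}\geq0$ whose model space $\cH(B_r)$ is $\kappa'$-dimensional; subtracting this finite-rank positive kernel gives $\nu_-(K_S)\leq\kappa'$, with equality precisely when $S$ annihilates none of those negative directions, i.e.\ when the factors have no common zeros. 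Uniqueness up to unitary constant factors follows because the poles of $S$ together with their directional data are intrinsic to $S$, so a coprime Blaschke product of minimal degree capturing exactly these is determined up to a right unitary constant, and $S_r=SB_r$ inherits the same ambiguity.
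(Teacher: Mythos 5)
The paper offers no proof of Theorem \ref{krein-langer-fact}: it is quoted from Kre\u{\i}n and Langer, {\cite[\S4.2]{ADRS}} and \cite{DLS1}, where the standard argument runs through realization theory (a co-isometric colligation with a Pontryagin state space of negative index $\kappa$, and the $\kappa$-dimensional negative invariant subspace of its main operator producing the Blaschke factor). Your inductive, function-theoretic route is closer in spirit to the original Kre\u{\i}n--Langer argument, and its algebraic ingredients are sound: the identity $K_{Sb}(w,z)=K_S(w,z)+S(z)K_b(w,z)S^*(w)$ is correct and does show that right multiplication by an inner factor cannot increase the number of negative squares, and deducing \eqref{le-kre} from \eqref{right-krein} via $S^{\#}$ is fine, as is the inequality $\kappa'\leq\kappa$ in the converse.

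Nevertheless there are genuine gaps. First, your reduction lemma is false as stated: for a rank-one factor $b$ as in \eqref{bla-fo-fact} one has $S(z)b(z)=S(z)(I-P)+\rho\,\frac{z-\alpha}{1-\bar\alpha z}\,S(z)P$, so $Sb$ is holomorphic at $\alpha$ only if the entire principal part of $S$ at $\alpha$ is supported on the one-dimensional subspace $P\cU$ and the pole is simple; in general a single factor only lowers the total pole multiplicity by one, and the induction must be organized around that weaker statement. Second, the heart of the theorem --- that each such factor removes \emph{exactly} one negative square, so that after $\kappa$ steps one lands in $\So$ and $B_r$ has degree exactly $\kappa$ --- is precisely the step you label ``the crux'' and do not prove; without it nothing is established. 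Third, the input you ``record first,'' namely that $\kappa\geq1$ forces $S$ to have a pole in $\dD$ with total pole multiplicity controlled by $\kappa$ (equivalently, that a function holomorphic in $\dD$ whose Schur kernel has finitely many negative squares is an ordinary Schur function), is itself a nontrivial consequence of the factorization (compare Lemma \ref{kar-gen} and the remarks following the theorem) and cannot be assumed; as used, it makes the induction circular at its starting point. The equality case $\kappa'=\kappa$ in the converse and the uniqueness statement likewise remain at the level of plausible sketches rather than proofs.
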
  The factorization \eqref{right-krein} is called the \textbf{right Kre\u{\i}n-Langer factorization}
   and \eqref{le-kre} is the \textbf{left Kre\u{\i}n-Langer factorization.}
  It follows that $S \in \SK$ has $\kappa$ poles (counting multiplicities) in $\dD,$
    contractive strong limit values exist a.e. on $\dT$ and $S$ can also be extended to  $L^{\infty}(\cU,\cY)$-function.
    Actually, these properties also characterize the generalized Schur functions. This result will be stated for reference purposes. For the proof of the sufficiency, see   \cite[Proposition 7.11]{DLS1}.
    \begin{lemma}\label{kar-gen}
    Let  $S$ be  an $\cL(\cU,\cY)$-valued function  holomorphic at the origin. Then  $S \in \SK$ if and only if $S$  is meromorphic  on $\dD$ with finite pole multiplicity $\kappa$ and
        $$  \lim_{r \to 1} \sup_{ |z|=r} \| S(z)\|\leq1      $$ holds.
    \end{lemma}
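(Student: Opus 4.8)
The plan is to prove Lemma~\ref{kar-gen} by establishing both implications, treating the forward direction (necessity) as essentially a packaging of facts already collected from Theorem~\ref{krein-langer-fact}, and the converse (sufficiency) as the genuine content, for which I would lean on the cited \cite[Proposition 7.11]{DLS1} but sketch how the argument runs. First I would dispose of necessity. Suppose $S \in \SK$. By Theorem~\ref{krein-langer-fact}, $S$ admits the right Kre\u{\i}n--Langer factorization $S = S_r B_r^{-1}$ with $S_r \in \So$ and $B_r$ a Blaschke product of degree $\kappa$. Since $B_r^{-1}$ has exactly $\kappa$ poles in $\dD$ (counting multiplicities), coming from the $\kappa$ zeros of the Blaschke product, and $S_r$ is holomorphic on $\dD$ with no common zeros cancelling these poles (by the no-common-zero clause of the theorem), $S$ is meromorphic on $\dD$ with pole multiplicity exactly $\kappa$. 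For the boundary growth condition, I would use that $B_r(\zeta)$ is unitary for a.e.\ $\zeta \in \dT$ (indeed everywhere on $\dT$), so $\|B_r^{-1}(\zeta)\| = 1$ there, while $S_r \in \So$ has contractive nontangential boundary values a.e.\ on $\dT$. Combining these gives $\|S(z)\| \to \|S_r(\zeta)B_r^{-1}(\zeta)\| \le 1$ radially a.e., and a short argument using that the finitely many poles lie strictly inside $\dD$ upgrades this to $\limsup_{r \to 1}\sup_{|z|=r}\|S(z)\| \le 1$.

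For the converse, assume $S$ is holomorphic at the origin, meromorphic on $\dD$ with finite pole multiplicity $\kappa$, and satisfies the $\limsup$ bound. The strategy is to extract an inner Blaschke-type factor absorbing the poles and reduce to the ordinary Schur class. Let $\alpha_1, \dots, \alpha_n \in \dD$ be the poles of $S$ with their multiplicities summing to $\kappa$; I would form a scalar (or, more carefully, operator) Blaschke product $B$ of degree $\kappa$ whose zeros match these poles, using factors of the form \eqref{bla-fo-fact}, so that $B$ is bi-inner and $\|B^{-1}(\zeta)\| = 1$ a.e.\ on $\dT$. Then $S_l := B\, S$ (for the left version) is holomorphic on $\dD$ because $B$ cancels all the poles of $S$, and the boundary estimate gives $\limsup_{r\to 1}\sup_{|z|=r}\|S_l(z)\| = \limsup_{r\to1}\sup_{|z|=r}\|B(z)S(z)\| \le 1$. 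A maximum-modulus / Phragm\'en--Lindel\"of argument for bounded holomorphic operator-valued functions on $\dD$ then yields $\|S_l(z)\| \le 1$ throughout $\dD$, i.e.\ $S_l \in \So$. Unwinding, $S = B^{-1}S_l$ is a left Kre\u{\i}n--Langer-type factorization, and by the converse half of Theorem~\ref{krein-langer-fact} such a function lies in $\bS_{\kappa'}$ for some $\kappa' \le \kappa$; the hypothesis that the pole multiplicity equals $\kappa$ forces $S_l$ and $B$ to have no common zeros, so $\kappa' = \kappa$ and $S \in \SK$.

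The main obstacle I anticipate is the operator-valued bookkeeping in the converse, specifically producing a single Blaschke product $B$ that simultaneously cancels all poles of the $\cL(\cU,\cY)$-valued function $S$ with the correct total multiplicity $\kappa$. In the scalar case this is immediate, but for operator-valued $S$ the ``pole multiplicity'' refers to the finite rank of the principal parts in the local Laurent expansions, and one must check that a product of simple Blaschke--Potapov factors of the form \eqref{bla-pro}, with projections chosen adapted to the ranges of those principal parts, removes the singularities without introducing new ones. This is precisely the technical heart handled in \cite[Proposition 7.11]{DLS1}, so for a self-contained account I would either invoke that result directly for sufficiency (as the excerpt already signals) or reproduce its reduction, taking care that the passage from the radial $\limsup$ bound to a global norm bound on the holomorphic factor $S_l$ is justified by the fact that bounded holomorphic functions into $\cL(\cU,\cY)$ attain their supremum on the boundary in the appropriate sense.
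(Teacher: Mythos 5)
Your proposal matches the paper's treatment: the paper gives no self-contained proof of this lemma, obtaining necessity as an immediate consequence of the Kre\u{\i}n--Langer factorization (Theorem \ref{krein-langer-fact}) exactly as you do, and citing \cite[Proposition 7.11]{DLS1} for sufficiency, which is also your stated fallback. One small refinement for the necessity direction: do not route through a.e.\ radial boundary values --- the clean argument is that $\| S(z)\| \leq \| S_r(z)\| \, \| B_r^{-1}(z)\| \leq \| B_r^{-1}(z)\|$ throughout $\dD$ minus the poles (since $S_r \in \So$ is contractive on all of $\dD$), and the finite Blaschke product $B_r$ is continuous on $\overline{\dD}$ with unitary values on $\dT$, so $\sup_{|z|=r}\| B_r^{-1}(z)\| \to 1$ as $r \to 1$, which gives the $\limsup$ bound directly.
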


    A function $S \in \SK$ 
     and the factors $S_r$ and $S_l$  in \eqref{right-krein} and  \eqref{le-kre}  have simultaneously isometric (co-isometric, unitary) boundary values since the factors $B^{-1}_l$ and $B^{-1}_r$ have unitary values everywhere on $\dT.$

  The following result \cite[Theorem V.4.2]{SF}, which involves the notion of an \textbf{outer function} (for the definition, see \cite{SF}), will be utilized.
   \begin{theorem}\label{majorant}
If $\cU$ is a separable Hilbert space and $N\in L^{\infty}(\cU)$ such that $0 \leq N(\zeta)\leq I_{\cU}$ a.e. on $\dT,$
   then there exist a Hilbert space $\cK$ and an outer function $\varphi \in \mathbf{S}(\cU,\cK)$ such that
   \begin{itemize}
     \item[\rm(i)] $\varphi^*(\zeta)\varphi(\zeta)\leq N^2(\zeta)$ a.e. on $\dT;$
     \item[\rm(ii)] if $\widehat{\cK}$ is a Hilbert space and $\widehat{\varphi}\in \mathbf{S}(\cU,\widehat{\cK})$ such that
     $\widehat{\varphi}^*(\zeta)\widehat{\varphi}(\zeta)\leq N^2(\zeta)$ a.e. on $\dT,$ then
     $\widehat{\varphi}^*(\zeta)\widehat{\varphi}(\zeta)\leq \varphi^*(\zeta)\varphi(\zeta)$ a.e. on $\dT.$
   \end{itemize}
   Moreover, $\varphi$ is unique up to a left constant unitary factor.
   \end{theorem}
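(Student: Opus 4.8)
The plan is to recognise the asserted function $\varphi$ as the \emph{maximal factorable minorant} of $N^2$: condition (i) says the boundary function $\varphi^*\varphi$ is a non-negative minorant of $N^2$ that factors through an outer function, and (ii) says it is the largest such minorant. Everything will be extracted from the geometry of a single shift-invariant subspace. Writing $N(\zeta):=(N^2(\zeta))^{1/2}\ge 0$ for the measurable positive square root, multiplication by $N$ is a contraction on $L^2(\cU)$ because $0\le N\le I$; since $N$ commutes with $M_\zeta$ (multiplication by the coordinate), the operator $V\colon H^2(\cU)\to L^2(\cU)$, $Vu=Nu$, is a contraction intertwining the shift on $H^2(\cU)$ with $M_\zeta$. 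Set $\cM:=\cran V\subseteq L^2(\cU)$; it is invariant under $M_\zeta$, so $M_\zeta\uphar_\cM$ is an isometry.

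First I would run the Wold decomposition of $M_\zeta\uphar_\cM$, splitting $\cM=\cM_u\oplus\cM_s$ into its unitary part $\cM_u=\bigcap_{n\ge0}M_\zeta^n\cM$ and its shift part $\cM_s$. By the Beurling--Lax--Halmos theorem the wandering subspace $\cK:=\cM\ominus M_\zeta\cM$ is a Hilbert space for which there is a partial isometry $W_0\colon\cM\to H^2(\cK)$, unitary from $\cM_s$ onto $H^2(\cK)$, vanishing on $\cM_u$, and intertwining $M_\zeta\uphar_\cM$ with the shift. Defining $\varphi$ to be the $\cL(\cU,\cK)$-valued function whose multiplication operator is $M_\varphi:=W_0V\colon H^2(\cU)\to H^2(\cK)$, the intertwining properties of $V$ and $W_0$ force $M_\varphi$ to commute with the shifts, so that $\varphi$ is a bounded holomorphic multiplier; the estimate $\|\varphi u\|=\|W_0Vu\|\le\|Nu\|\le\|u\|$ shows $\varphi\in\bS(\cU,\cK)$. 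Moreover $\varphi$ is outer, since $\overline{\varphi H^2(\cU)}=\overline{W_0V(H^2(\cU))}=W_0\cM=H^2(\cK)$, as $V$ has dense range in $\cM$ and $W_0\cM=H^2(\cK)$.

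For (i), the contraction estimate gives $\int\langle(N^2-\varphi^*\varphi)u,u\rangle\,d\theta\ge0$ for every $u\in H^2(\cU)$. I would upgrade this to the pointwise bound $\varphi^*\varphi\le N^2$ a.e.\ by testing against $u=f\,u_0$ with $u_0\in\cU$ fixed and $f$ an outer scalar function: since $\{|f|^2:f\in H^\infty\}$ contains every weight of the form $\chi_E+\varepsilon$, the scalar function $\zeta\mapsto\langle(N^2(\zeta)-\varphi^*(\zeta)\varphi(\zeta))u_0,u_0\rangle$ must be non-negative a.e., and separability of $\cU$ lets one pass to a countable dense set of $u_0$. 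For (ii), let $\wh\varphi\in\bS(\cU,\wh\cK)$ be outer with $\wh\varphi^*\wh\varphi\le N^2$. Then $Nu\mapsto\wh\varphi u$ extends to a shift-intertwining contraction $\wh W\colon\cM\to H^2(\wh\cK)$ with dense range; being an intertwiner into a pure isometry it annihilates $\cM_u$, and on $\cM_s$ it factors through $W_0$ as $\wh W=M_\Psi W_0$ for a Schur function $\Psi\in\bS(\cK,\wh\cK)$. Evaluating on constant $u_0$ (using $W_0(Nu_0)=\varphi u_0$) yields $\wh\varphi=\Psi\varphi$ a.e.\ on $\dT$, whence $\wh\varphi^*\wh\varphi=\varphi^*\Psi^*\Psi\varphi\le\varphi^*\varphi$ because $\Psi$ has contractive boundary values. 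This is exactly (ii).

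Uniqueness is the standard rigidity of outer functions: if $\varphi_1,\varphi_2$ are outer with $\varphi_1^*\varphi_1=\varphi_2^*\varphi_2$ a.e., then $\varphi_1u\mapsto\varphi_2u$ is isometric on the dense subspace $\varphi_1H^2(\cU)$ and extends to a shift-intertwining unitary $H^2(\cK_1)\to H^2(\cK_2)$, which can only be multiplication by a constant unitary $W$; thus $\varphi_2=W\varphi_1$. I expect the main obstacle to be the bookkeeping in the Wold/Beurling--Lax step, specifically identifying the output space $\cK$ with the wandering subspace (which may be strictly smaller than $\cU$) and verifying that $W_0V$ really is a multiplier operator rather than merely a shift-intertwining contraction; the pointwise upgrade in (i) is the other place where the operator-valued setting, as opposed to the scalar Szeg\H{o} theory, genuinely needs the separability hypothesis.
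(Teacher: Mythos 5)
The paper does not prove this statement at all: it is imported verbatim as Theorem V.4.2 of Sz.-Nagy--Foias \cite{SF}, and only its conclusion is used (to define the defect functions $\varphi_S$ and $\psi_S$). So there is nothing in the paper to compare against line by line; what you have written is essentially the classical proof from that reference, namely the construction of the maximal factorable minorant by restricting the shift to $\cM=\overline{NH^2(\cU)}$, splitting off the unitary part by the Wold decomposition, and reading the outer multiplier off the pure part. The individual steps are sound: $W_0V$ intertwines the shifts and is therefore a multiplication operator by the standard characterization of analytic Toeplitz operators; outerness follows from $\overline{W_0V H^2(\cU)}=W_0\cM=H^2(\cK)$; the pointwise upgrade in (i) via scalar outer weights $|f|^2=\chi_E+\varepsilon$ and a countable dense subset of $\cU$ is correct; the factorization $\widehat{\varphi}=\Psi\varphi$ through the wandering part gives (ii); and the rigidity argument for uniqueness is standard. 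The one blemish is in (ii): you assume $\widehat{\varphi}$ is outer, whereas the theorem asserts the inequality for \emph{every} $\widehat{\varphi}\in\mathbf{S}(\cU,\widehat{\cK})$ satisfying $\widehat{\varphi}^*\widehat{\varphi}\leq N^2$. Fortunately your own argument never uses outerness --- the dense range of $\widehat{W}$ plays no role; only contractivity, the intertwining relation, and the vanishing of $\widehat{W}$ on $\cM_u$ are needed --- so the hypothesis should simply be deleted rather than repaired. With that correction the proposal is a complete and correct proof of the cited result.
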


   For $S \in \SK$ with the Kre\u{\i}n--Langer factorizations $S=S_rB_r^{-1}=B_{l}^{-1}S_l,$ define
    \begin{align*} N_{S}^2(\zeta)&:=I_{\cU}-S^*(\zeta)S(\zeta), \qquad  \mbox{a.e.} \quad\zeta \in \dT, \\
    M_{S}^2(\zeta)&:=I_{\cY}-S(\zeta)S^*(\zeta), \qquad   \mbox{a.e.} \quad\zeta \in \dT. \end{align*}
      Since Blaschke products are unitary on $\dT,$ it follows that
           \begin{align}N_{S}^2(\zeta) &= I_{\cU}-S_l^*(\zeta)S_l(\zeta)  \,=N_{S_l}^2(\zeta) \label{leftde}
           \\M_S^2(\zeta)&= I_{\cY}-S_r(\zeta)S_r^*(\zeta)  =M_{S_r}^2(\zeta)\label{rightde}.\end{align}
Theorem \ref{majorant} guarantees that there exists an outer function  $\varphi_{S}$ with properties introduced in Theorem \ref{majorant}  for $N_{S}.$   An easy modification of Theorem \ref{majorant} shows that there exists a Schur function $\psi_S$ such that $\psi_S^\#$ is an outer function, $ \psi_S(\zeta) \psi_S^*(\zeta) \leq  M_S^2(\zeta)$  a.e. $\zeta \in\dT$ and $ \psi_S(\zeta) \psi_S^*(\zeta) \leq  \widehat{\psi}(\zeta) \widehat{\psi}^*(\zeta)$ for every Schur function $\widehat{\psi}$ with a property $ \widehat{\psi}_S(\zeta) \widehat{\psi}_S^*(\zeta) \leq  M_S^2(\zeta).$  Moreover,it follows from the identies \eqref{leftde} and \eqref{rightde} that
\begin{align}\varphi_{S} =\varphi_{S_l}\quad\text{and}\quad
           \psi_S= \psi_{S_r} \label{rightdefect}.\end{align}

     The     function $\varphi_S$ is called the  \textbf{right defect function} and $\psi_S$  is  the \textbf{left defect function}.

Let $\Sigma=(A,B,C,D;\cX,\cU,\cY;\kappa)$ be a passive system.
The following subspaces \begin{align}
\cX^c&:=\cspan\{\ran A^{n}B:\,n=0,1,\ldots\} \label{cont1},  \\
\cX^o&:= \cspan\{\ran A^{*n}C^*:\,n=0,1,\ldots\}\label{obs1},  \\
\cX^{s}&:= \cspan\{\ran A^{n}B,\ran A^{*m}C^*:\,n,m=0,1,\ldots\},  \label{simp1}
\end{align}
 are called respectively controllable,  observable and simple subspaces.
 The system $\Sigma$ is said to be \textbf{controllable} (\textbf{observable}, \textbf{simple})
  if $\cX^c=\cX (\cX^o=\cX,\cX^s=\cX)$ and \textbf{minimal} if it is both controllable and observable.
   When $\Omega\ni0$ is some symmetric neighbourhood of the origin, that is, $\bar{z} \in \Omega$ whenever $z\in \Omega,$ then also
 \begin{align}
\cX^c&=\cspan\{\ran (I-zA)^{-1}B: z \in \Omega\} \label{cont2},  \\
\cX^o&=\cspan\{\ran (I-zA^*)^{-1}C^*: z \in \Omega\}\label{obs2},  \\
\cX^{s}&=\cspan\{\ran (I-zA)^{-1}B,\ran (I-wA^*)^{-1}C^*: z,w \in \Omega\} \label{simp2}.
\end{align}
 If the system operator $T_\Sigma$ in \eqref{colli} is a contraction, that is, $\Sigma$ is passive,  the operators
 $$A: \cX \to \cX, \quad  \begin{pmatrix}
                    A \\
                    C
                  \end{pmatrix}: \cX \to   \begin{pmatrix}
                    \cX \\
                    \cY
                  \end{pmatrix} , \quad \begin{pmatrix}
                    A &
                    B
                  \end{pmatrix}: \begin{pmatrix}
                    \cX \\
                    \cU
                  \end{pmatrix} \to \cX,      $$ are also bi-contractions. Moreover, the
 operators $B$ and $C^*$ are contractions but not bi-contractions unless $\kappa=0.$

The following realization theorem is known, and the parts (i)--(iii) can be found e.g. in  \cite[Chapter 2]{ADRS} and the part (iv) in \cite[Theorem 2.3 and Proposition 3.3]{Saprikin1}. 
\begin{theorem}\label{realz}
  For $\theta \in \SK$ there exist realizations $\Sigma_k, k=1,\ldots,4,$ of $\theta$ such that
  \begin{itemize}
    \item[\rm{(i)}] $\Sigma_1$ is conservative and simple;
    \item[\rm{(ii)}] $\Sigma_2$ is  isometric and controllable;
    \item[\rm{(iii)}]$\Sigma_3$ is  co-isometric and observable;
    \item[\rm{(iv)}]  $\Sigma_4$ is  passive and minimal.
  \end{itemize} Conversely, if the system $\Sigma$ has some of the properties {\rm(i)--(iv)},
   then $\theta_{\Sigma}\in \SK,$ where $\kappa$ is the negative index of the state space of $\Sigma.$
 \end{theorem}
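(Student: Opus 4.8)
The plan is to obtain the four realizations from reproducing kernel Pontryagin space models and to deduce the converse from the fundamental kernel identity attached to a passive colligation. Since $\theta\in\SK$, the kernel $K_\theta$ of \eqref{kernel1} has exactly $\kappa$ negative squares, so it generates a generalized de Branges--Rovnyak space $\cH(\theta)$ with $\mathrm{ind}_-\cH(\theta)=\kappa$. I would first treat the coisometric observable case (iii): take $\cX=\cH(\theta)$ and define the colligation by the backward shift $(Af)(z)=\bigl(f(z)-f(0)\bigr)/z$, the point evaluation $Cf=f(0)$, the control operator $(Bu)(z)=\bigl(\theta(z)-\theta(0)\bigr)u/z$, and $D=\theta(0)$. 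Using the reproducing property $C^*y=K_\theta(0,\cdot)y$ one checks that $\theta_\Sigma=\theta$ and that the block operator is coisometric; observability is built in, because by \eqref{obs2} the vectors $(I-\bar wA^*)^{-1}C^*y$ are precisely the kernel functions $K_\theta(w,\cdot)y$, whose closed span is all of $\cH(\theta)$.

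The remaining forward cases I would reduce to (iii) or to known models. Since $\theta^{\#}\in\bS_\kappa(\cY,\cU)$, applying the construction above to $\theta^{\#}$ gives a coisometric observable realization $\Sigma'$ of $\theta^{\#}$; its dual $\Sigma=(\Sigma')^*$ then realizes $(\theta^{\#})^{\#}=\theta$ and is isometric and controllable, giving (ii). For (i) I would pass to the two-component de Branges--Rovnyak space whose $2\times2$ kernel again has $\kappa$ negative squares; the associated colligation is unitary and simple. Part (iv) does not come from a kernel model directly; here I would invoke Saprikin's construction of the optimal minimal passive realization, the Pontryagin space generalization of Arov's theorem.

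For the converse, let $\Sigma=(A,B,C,D;\cX,\cU,\cY;\kappa)$ be passive. Writing $F(z)=C(I-zA)^{-1}$, the contractivity of $T_\Sigma$ yields an identity of the form
\[
\frac{I_\cY-\theta_\Sigma(z)\theta_\Sigma(w)^*}{1-z\bar w}=F(z)F(w)^*+R(w,z),
\]
where $R$ is a positive kernel assembled from the defect $I-T_\Sigma T_\Sigma^*\ge0$ and vanishes when $\Sigma$ is coisometric. The Gram matrices of the first term are $\bigl(\langle g_j,g_i\rangle_\cX\bigr)$ with $g_j=(I-\bar w_jA^*)^{-1}C^*y_j\in\cX$, so they have at most $\mathrm{ind}_-\cX=\kappa$ negative eigenvalues; since the number of negative squares of a sum does not exceed the sum of the numbers of negative squares and $R\ge0$, it follows that $\theta_\Sigma\in\mathbf S_{\kappa'}$ with $\kappa'\le\kappa$.

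The hard part is the reverse inequality $\kappa'\ge\kappa$, and it is precisely here that the minimality hypotheses and the indefiniteness of $\cX$ enter. In the coisometric observable case $R=0$ and, by \eqref{obs2}, the vectors $g_j$ range over a dense subset of $\cX^o=\cX$; since $\cX$ has negative index exactly $\kappa$ one can select a configuration whose Gram matrix already exhibits $\kappa$ negative eigenvalues, forcing $\kappa'=\kappa$. The isometric controllable case follows by duality through $\Sigma^*$, and the simple conservative case by identifying the state space with the two-component model, whose negative index equals that of $\cH(\theta_\Sigma)$. The genuinely delicate case is the minimal passive one, where $R\neq0$; there the equality $\kappa'=\kappa$ rests on Saprikin's analysis showing that a minimal passive realization has state space negative index equal to that of its transfer function. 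Throughout, the main technical obstacle, absent in the Hilbert space theory, is to guarantee that the subspaces $\cX^c,\cX^o,\cX^s$ and the compressions used are regular, so that they remain Pontryagin spaces of the correct negative index and the operators involved stay bi-contractions.
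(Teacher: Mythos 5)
Your proposal is correct and follows essentially the route the paper itself relies on: the paper states this theorem without proof, citing \cite[Chapter 2]{ADRS} for parts (i)--(iii) (the canonical coisometric, isometric and unitary reproducing-kernel models on $\cH(\theta)$, $\cH(\theta^{\#})$ and the two-component space) and \cite{Saprikin1} for part (iv) and for the converse bound $\kappa'\leq\mathrm{ind}_-\cX$ with equality under minimality. Your reconstruction of those arguments, including the kernel decomposition $K_{\theta_\Sigma}(w,z)=F(z)F(w)^*+R(w,z)$ with $R\geq 0$ and the density argument forcing $\kappa'=\kappa$ in the observable coisometric case, is the standard proof and is sound.
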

It is also true that the transfer function of passive system is a generalized Schur function,
 but its index may be smaller than the  negative index of the state space \cite[Theorem 2.2]{Saprikin1}. For  a conservative system $\Sigma$ it is known from  \cite[Theorem 2.1.2 (3)]{ADRS} that the index of the transfer function $\theta_\Sigma$ of $\Sigma$ co-insides with the negative index of the state space $\cX$ of $\Sigma$  if and only if the space $
(\cX^{s})^\perp$ is a Hilbert subspace. This result holds also in more general settings when $\Sigma$ is passive, as it will be proved in Lemma \ref{simp-kar}, after introducing  some machinery.

Two  realizations $\Sigma_1=(A_1,B_1,C_1,D_1;\cX_1,\cU,\cY;\kappa)$ and  $\Sigma_2=(A_2,B_2,C_2,D_2;\cX_2,\cU,\cY;\kappa)$
 of the same function $\theta \in \SK$ are  called \textbf{unitarily similar} \label{unsims} if $D_1=D_2$ and there exists a unitary operator $U: \cX_1 \to \cX_2 $
  such that $$  A_1=U^{-1}A_2U, \quad B_1=U^{-1}B_2, \quad C_1=C_2U.   $$ Moreover, the realizations $\Sigma_1$ and $\Sigma_2$ are said to be \textbf{weakly similar} if  $D_1=D_2$
  and there exists an injective closed densely defined possibly unbounded linear operator $Z: \cX_1 \to \cX_2$ with the dense range such that
 \begin{equation*}\label{weaksim}
ZA_1f=A_2Zf, \quad C_1f=C_2Zf, \quad  f\in\cD( Z),\quad  \mbox{and} \quad
ZB_1=B_2.      \end{equation*}

 Unitary similarity preserves dynamical properties of the system and also the spectral properties of the main operator.
  If two realizations of $\theta \in \SK$ both have the same property {\rm(i), (ii)} or { \rm(iii)} of Theorem \ref{realz},
   then they are unitarily similar \cite[Theorem 2.1.3]{ADRS}. In Hilbert state space case,
    results of Helton \cite{Helton} and Arov \cite{A} state that two minimal passive realizations of
    $\theta \in \So$ are weakly similar. However, weak similarity preserves neither
     the dynamical properties of the system nor the spectral properties of its main operator.
      The following theorem shows that Helton's and Arov's statement holds also in Pontryagin state space settings.
       Proof is similar to the one given in the Hilbert space settings  in \cite[Theorem 3.2]{BallCohen} and \cite[Theorem 7.13]{BGKR}.

 \begin{theorem}\label{weaks}
    Let $\Sigma_1=(
    T_{\Sigma_1};\cX_1,\cU,\cY;\kappa)$ and  $\Sigma_2=( T_{\Sigma_2}
    ;\cX_2,\cU,\cY;\kappa)$
     be two minimal passive realizations of $\theta \in \SK.$ Then they are weakly similar.
 \end{theorem}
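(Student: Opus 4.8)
The goal is to construct an explicit weak similarity operator $Z:\cX_1\to\cX_2$ between two minimal passive realizations of the same $\theta\in\SK$. The plan is to define $Z$ on the controllable subspace of $\cX_1$ by matching the natural "reachability" data of the two systems, and then verify the three intertwining relations together with closability, injectivity, and density of domain and range. Since both systems are minimal, in particular controllable, the linear manifold
\begin{equation*}
\cD_0:=\mathrm{span}\{\ran(I-zA_1)^{-1}B_1: z\in\Omega\}
\end{equation*}
is dense in $\cX_1$ by \eqref{cont2}. First I would define $Z$ on vectors of the form $f=(I-zA_1)^{-1}B_1u$ by setting $Zf:=(I-zA_2)^{-1}B_2u$, and extend by linearity to $\cD_0$; this is the candidate domain $\cD(Z)$. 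The key point making this well defined is that both systems realize the same $\theta$, so the transfer-function identity $D+zC_j(I-zA_j)^{-1}B_j=\theta(z)$ holds for $j=1,2$ in a neighbourhood of the origin, forcing the observation data $C_1f$ and $C_2Zf$ to agree on $\cD_0$.

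The intertwining relations should then fall out of the resolvent calculus. For the control relation, $Z B_1 u = Z(I-0\cdot A_1)^{-1}B_1u = B_2u$, giving $ZB_1=B_2$ directly from the $z=0$ case. For the main-operator relation $ZA_1f=A_2Zf$, I would use the resolvent identity $A_j(I-zA_j)^{-1}=\tfrac1z\big((I-zA_j)^{-1}-I\big)$ to rewrite $A_1(I-zA_1)^{-1}B_1u$ as a combination of reachability vectors that $Z$ already knows how to transport, and check the two sides match; this is a routine but slightly delicate computation because one must stay inside $\cD_0$ and handle the $z\to 0$ behaviour. For the observation relation $C_1f=C_2Zf$, the equality of transfer functions gives, after expanding $\theta(z)-D=zC_j(I-zA_j)^{-1}B_j$ and comparing, that $C_1(I-zA_1)^{-1}B_1u=C_2(I-zA_2)^{-1}B_2u$ for all $z$ near $0$ and all $u$, which is exactly $C_1f=C_2Zf$ on generators of $\cD_0$.

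What remains, and where the main work lies, is the functional-analytic side: showing $Z$ is \emph{closable}, \emph{injective}, and has \emph{dense range}, so that its closure is an injective closed densely defined operator with dense range as the definition of weak similarity demands. Density of the range is immediate, since the image $\ran Z$ contains all reachability vectors of $\Sigma_2$, which span $\cX_2$ by controllability of $\Sigma_2$. For injectivity and closability I would pass to the dual systems $\Sigma_1^*$ and $\Sigma_2^*$: because $\theta_{\Sigma_j^*}=\theta_{\Sigma_j}^\#$ and each $\Sigma_j$ is minimal (hence observable), the adjoint systems are controllable, so the same construction produces a densely defined operator $W:\cX_2\to\cX_1$ intertwining the dual data. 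The expected crux is to show that $W$ plays the role of an adjoint-like inverse to $Z$, i.e. that $\langle Zf,g\rangle_{\cX_2}=\langle f,Wg\rangle_{\cX_1}$ holds on the respective dense domains, built from the pairing
\begin{equation*}
\langle (I-zA_2)^{-1}B_2u,\,(I-\bar w A_2^*)^{-1}C_2^* h\rangle_{\cX_2},
\end{equation*}
which both systems compute through the same transfer-function coefficients and hence agree across $\Sigma_1$ and $\Sigma_2$. This symmetric pairing simultaneously yields that $Z$ is closable (its adjoint is densely defined, containing $\cD(W)$ in its domain) and that $Z$ is injective (a nonzero kernel vector would be orthogonal to the dense range of $W$, a contradiction). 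Handling the unbounded, possibly non-contractive nature of $Z$ in the indefinite-metric setting — in particular making sure the pairing computation is legitimate when the reproducing-kernel Gram matrices have negative squares — is the step I expect to require the most care, and is precisely where the Pontryagin-space hypothesis $\mathrm{ind}_-\cX_1=\mathrm{ind}_-\cX_2=\kappa$ enters.
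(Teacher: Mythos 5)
Your proposal is correct and follows essentially the same route as the paper's proof: the intertwiner is defined on the dense reachability manifold of $\Sigma_1$ (you use the resolvent-type generators of \eqref{cont2} where the paper uses the monomial generators $A_1^kB_1u$ of \eqref{cont1}), its graph is closed, and observability of $\Sigma_2$ and of $\Sigma_1$ respectively deliver single-valuedness/closability and injectivity, with the intertwining relations obtained by passing to limits. Your duality pairing of $(I-zA_1)^{-1}B_1u$ against $(I-\bar wA_2^*)^{-1}C_2^*h$, which both systems evaluate as $\bigl\langle \frac{\theta(z)-\theta(w)}{z-w}u,h\bigr\rangle$, is just the paper's direct computation $C_2A_2^ky=\lim C_1A_1^kx_n=0$ read through the inner product, so the two arguments agree in substance.
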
 \begin{proof}
                 Decompose the system operators as in \eqref{colli}. In a sufficiently small neighbourhood of the origin, 
                  the functions $\theta_{\Sigma_1}$ and $\theta_{\Sigma_2}$
                   have the Neumann series which coincide.  Hence $D_1=D_2$ and $C_1A_1^kB_1=C_2A_2^kB_2$ for any  $k \in \dN_0=\{0,1,2,\ldots\}.$ Since $\Sigma_1$ is controllable,
                    vectors of the form $x=\sum_{k=0}^N A_1^k B_1 u_k, u_k \in \cU,$ are dense in $\cX_1.$ 
                     Define $$Rx=\sum_{k=0}^N A_2^k B_2 u_k,$$ and let $Z$ be the closure of the graph of $R.$      Let $\{x_n\}_{n\in \dN}\subset \mathrm{span}\{\ran A_1^{k}B_1:\,k \in \dN_0\} =\cD(R)$ such that $x_n \to 0$ and $Rx_n \to y$ when $n\to \infty.$ Since  $C_1A_1^kB_1=C_2A_2^kB_2$ for any $k\in \dN_0,$ also $C_1A_1^kx_n=C_2A_2^kRx_n,$ and the continuity implies
                    $$
                    C_2A_2^ky =\lim_{n \to \infty }  C_2A_2^kRx_n = \lim_{n \to \infty }C_1A_1^kx_n=0. $$
                  Since $\Sigma_2$ is observable, it follows from \eqref{obs1} that
                 \begin{equation} \label{zeroker}
                 \bigcap_{k \in \dN_0 } \ker  C_2A_2^k =\{0\},  \end{equation}
                 and therefore $y=0.$
                   This implies that $Z$ is a  closed densely defined linear operator. Since $\Sigma_2$ is controllable, the range of $Z$ is dense. 

                 To prove the injectivity,
                     let $x \in \cD(Z)$ such that $Zx=0.$ Then there exists
                  $\{x_n\}_{n\in \dN}\subset \cD(R)
                  $ such that $x_n \to x$ and $Rx_n \to 0.$
                %
                   By the continuity, $$C_1A_1^kx=\lim_{n \to \infty }C_1A_1^kx_n =\lim_{n \to \infty }  C_2A_2^kRx_n=0$$ for any $k \in \dN_0.$ Since $\Sigma_1$ is observable, this implies that $x=0,$ and $Z$ is injective.

                   For $x \in \cD(Z),$ there exists  $\{x_k\}_{k\in \dN}\subset \cD(R)$ such that $x_k \to x$ and $Rx_k \to Zx.$ Then    \begin{align}   A_1x &=\lim_{k \to \infty } A_1x_k\label{eq1} \\                 
                   A_2Zx &= \lim_{k \to \infty } A_2Rx_k =  \lim_{k \to \infty }RA_1x_k =   \lim_{k \to \infty }  ZA_1x_k \label{eq2}\\
                C_1x &=  \lim_{k \to \infty }  C_1x_k=  \lim_{k \to \infty } C_2Rx_k=C_2Zx \label{eq3} \\
                ZB_1&=RB_1=B_2.  \label{eq4}
                  \end{align} Since $Z$ is closed, equations \eqref{eq1}  and \eqref{eq2} show that  $A_1 x \in \cD(Z)$ and $ZA_1 x=A_2 Zx.$ Since \eqref{eq3} and \eqref{eq4} hold also, it has been shown that $Z$ is a weak similarity.
               \end{proof}
       \begin{remark}  It should be noted that Theorem \ref{weaks} holds also when all the spaces are Pontryagin,  Kre\u{\i}n or, if one defines the observability criterion as $\cap_{n \in \dN_0}\ker CA^n=\{0\},$ even Banach spaces. This result can also be derived from \cite[p. 704]{Staffans}.
\end{remark}

          \section{Julia operators, dilations, embeddings and products of systems}\label{sec-products}
         The system \eqref{colli} can be expanded to a larger system  either without changing the transfer function
          or without changing the main operator. Both of these can be done by using the \textbf{Julia operator}, see \eqref{Juliaoperator} below.
          For a proof of the next theorem  and some further details about Julia operators, see \cite[Lecture 2]{rovdrit}.

          \begin{theorem}\label{Julia} Suppose that $\cX_1$  and $\cX_2$ are Pontryagin spaces with the same negative index,
           and $A: \cX_1 \to \cX_2$ is a contraction. Then there exist Hilbert spaces $\sD_{A}$
            and $\sD_{A^*},$ linear operators $D_A: \sD_{A}\to\cX_1, D_{A^*}: \sD_{A^*} \to \cX_2 $
             with zero kernels  and a linear operator $L:\sD_{A} \to \sD_{A^*}  $ such that \begin{equation}\label{Juliaoperator}
                       U_A:= \begin{pmatrix}
                               A & D_{A^*} \\
                                D^*_{A} & -L^*
                             \end{pmatrix}: \begin{pmatrix}
                                              \cX_1 \\
                                              \sD_{A^*}
                                            \end{pmatrix} \to \begin{pmatrix}
                                             \cX_2 \\
                                              \sD_{A}
                                            \end{pmatrix}
                     \end{equation} is unitary. Moreover, $U_A$ is essentially unique.
          \end{theorem}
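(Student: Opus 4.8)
The plan is to reduce the whole statement to two facts: first, that the single ``defect column'' $V:=\begin{pmatrix}A\\ D_A^*\end{pmatrix}$ can be made a Kre\u{\i}n-space isometry of $\cX_1$ into a Pontryagin space $\cX_2\oplus\sD_A$ with $\sD_A$ a Hilbert space; and second, that the orthogonal complement of its range is automatically a Hilbert space, which supplies the missing second column of $U_A$. The point is that once $V$ is an isometry with regular range, the operator $U_A$ is obtained for free as the unitary identifying $\cX_1\oplus(\ran V)^\perp$ with $\cX_2\oplus\sD_A$, and all four block identities encoded in $U_A^*U_A=I$ and $U_AU_A^*=I$ come out of this description. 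The hypothesis that $\cX_1,\cX_2$ are Pontryagin spaces of the \emph{same} negative index enters decisively in the second fact.

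First I would construct $\sD_A$. Since $A$ is a contraction, $G_1:=I_{\cX_1}-A^*A$ is a nonnegative selfadjoint operator, so $[x,y]:=\langle G_1x,y\rangle_{\cX_1}$ is a nonnegative Hermitian form, bounded relative to the Hilbert majorant norm of $\cX_1$ because $G_1$ is continuous. Dividing out its isotropic part and completing yields a Hilbert space $\sD_A$ together with a bounded map $D_A^*\colon\cX_1\to\sD_A$ of dense range with $\|D_A^*x\|_{\sD_A}^2=\langle G_1x,x\rangle_{\cX_1}$; its Kre\u{\i}n adjoint $D_A\colon\sD_A\to\cX_1$ then satisfies $D_AD_A^*=G_1$ and is injective because $D_A^*$ has dense range. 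With this choice $V^*V=A^*A+D_AD_A^*=I_{\cX_1}$, so $\langle Vx,Vx\rangle=\langle x,x\rangle$ and $V$ is a Kre\u{\i}n isometry of $\cX_1$ into the Pontryagin space $\cX_2\oplus\sD_A$, whose negative index equals $\kappa$ since $\sD_A$ is Hilbert.

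The main step --- and where I expect the real work to lie --- is the complementation. Because $V^*V=I$, the operator $V$ is bounded below, so $\ran V$ is closed and $V$ is a bijective isometry of $\cX_1$ onto $\ran V$; hence $\ran V$ is complete and nondegenerate in the inherited inner product, i.e. a regular subspace of $\cX_2\oplus\sD_A$ with negative index $\kappa$. Consequently $\cX_2\oplus\sD_A=\ran V\oplus(\ran V)^\perp$, and the complement $\cR:=(\ran V)^\perp$ is a regular subspace of negative index $\kappa-\kappa=0$; this is exactly the place where the finiteness of $\kappa$ and the equality of the two negative indices are used, and it forces $\cR$ to be a Hilbert space. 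I would then set $\sD_{A^*}:=\cR$ and write the inclusion $\cR\hookrightarrow\cX_2\oplus\sD_A$ in components as $\eta\mapsto\begin{pmatrix}D_{A^*}\eta\\ -L^*\eta\end{pmatrix}$, thereby defining $D_{A^*}\colon\sD_{A^*}\to\cX_2$ and $L\colon\sD_A\to\sD_{A^*}$. By construction $U_A=\begin{pmatrix}A&D_{A^*}\\ D_A^*&-L^*\end{pmatrix}$ maps $\cX_1\oplus\sD_{A^*}$ isometrically onto $\ran V\oplus\cR=\cX_2\oplus\sD_A$, so it is unitary; reading off $U_AU_A^*=I$ gives $D_{A^*}D_{A^*}^*=I_{\cX_2}-AA^*$, and $\ker D_{A^*}=\cR\cap(\{0\}\oplus\sD_A)=\{0\}$ follows from the density of $\ran D_A^*$ in $\sD_A$.

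Finally, for essential uniqueness I would use that the factorization $D_AD_A^*=I_{\cX_1}-A^*A$ with $\sD_A$ Hilbert and $D_A$ injective is unique up to a Hilbert-space unitary: if $D_A'(D_A')^*$ is another such factorization, then $D_A^*x\mapsto(D_A')^*x$ is isometric with dense domain and range, hence extends to a unitary $\sD_A\to\sD_A'$ intertwining $D_A$ and $D_A'$. This identification fixes $V$ and therefore its range, so $\cR=(\ran V)^\perp$ and the second column of $U_A$ are determined up to the corresponding unitary identification of the complements. Thus $U_A$ is unique up to unitary identifications of $\sD_A$ and $\sD_{A^*}$, which is the asserted essential uniqueness.
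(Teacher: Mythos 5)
Your construction is correct, but note that the paper itself offers no proof of this theorem: it is quoted verbatim from the literature, with the reader referred to Dritschel--Rovnyak \cite[Lecture 2]{rovdrit}. The route taken there is two-sided and algebraic: one factors \emph{both} nonnegative operators $I-A^*A=D_AD_A^*$ and $I-AA^*=D_{A^*}D_{A^*}^*$ through Hilbert spaces, produces the link operator $L$ from the intertwining relation between the two defect operators, and then verifies the four block identities in $U_A^*U_A=I$ and $U_AU_A^*=I$ by direct computation. You instead build only the first defect operator, form the column isometry $V=\begin{pmatrix}A\\ D_A^*\end{pmatrix}:\cX_1\to\cX_2\oplus\sD_A$, and obtain the entire second column of $U_A$ for free as a parametrization of $(\ran V)^\perp$; the equality of the negative indices then forces that complement to be a Hilbert space, which becomes $\sD_{A^*}$. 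This is a genuinely different and, for the contractive Pontryagin case at hand, leaner argument: it never writes down $L$ explicitly and replaces the block-identity verification by the single geometric fact that a continuous bijective isometry onto an orthogonal decomposition is unitary. What it gives up is generality --- the reference's construction works for an arbitrary continuous operator between Kre\u{\i}n spaces, whereas yours leans on the contractivity of $A$ (to make $I-A^*A\geq 0$, hence $\sD_A$ Hilbert) and on the Pontryagin structure at the step discussed next.

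One step deserves more care than you give it. From $V^*V=I$ alone you conclude that $V$ is bounded below and that $\ran V$ is a closed regular subspace; in a general Kre\u{\i}n space neither implication holds, and even in the Pontryagin setting both rest on nontrivial facts: that a continuous isometry of a Pontryagin space into a Pontryagin space of the same finite negative index is bounded below with respect to the majorant norm (a compactness argument in the finite-dimensional negative part), and that a closed nondegenerate subspace of a Pontryagin space is automatically ortho-complemented. These are standard and can be cited (e.g.\ from \cite{Bognar} or \cite{rovdrit}), but as written the sentence ``because $V^*V=I$, the operator $V$ is bounded below'' hides exactly the point where the finiteness of $\kappa$ enters a second time. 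The remaining details --- $\ker D_A=\{0\}$ from the density of $\ran D_A^*$, $\ker D_{A^*}=\{0\}$ from $(\{0\}\oplus\sD_A)\cap(\ran V)^\perp=\{0\}$, and the essential uniqueness via the unitary identifying two injective factorizations of $I-A^*A$ --- are sound.
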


         A \textbf{ dilation} of a system \eqref{colli} is
           any system of the form $\widehat{\Sigma}=(\widehat{A},\widehat{B},\widehat{C},D;\widehat{\cX},\cU,\cY;\kappa'),$ where
  \begin{equation}\label{dilation}
 \widehat{\cX}=  \cD\oplus{\cX}\oplus\cD_*  , \quad \widehat{A} \cD \subset \cD, \quad \widehat{A}^* \cD_* \subset \cD_*, \quad \widehat{C} \cD =\{0\}, \quad \widehat{B}^* \cD_* =\{0\}.
 \end{equation} That is, the system operator $T_{\widehat{\Sigma}}$ of $\widehat{\Sigma}$ is of the form
  \begin{equation}\label{dilatio-blok}\begin{split}
    T_{\widehat{\Sigma}}=\begin{pmatrix}
                           \begin{pmatrix}
                             A_{11} & A_{12} & A_{13} \\
                             0 & A & A_{23} \\
                             0 & 0 & A_{33}
                           \end{pmatrix} & \begin{pmatrix}
                                             B_1\\
                                            B \\
                                            0
                                          \end{pmatrix} \\
                           \begin{pmatrix}
                                             0&
                                            C &
                                            C_1
                                          \end{pmatrix} & D
                         \end{pmatrix}: \begin{pmatrix}
                                          \begin{pmatrix}
                                            \cD \\
                                            \cX \\
                                            \cD_*
                                          \end{pmatrix} \\
                                          \cU
                                        \end{pmatrix} \to\begin{pmatrix}
                                          \begin{pmatrix}
                                            \cD \\
                                            \cX \\
                                            \cD_*
                                          \end{pmatrix} \\
                                          \cY
                                        \end{pmatrix}, \\
                                        \widehat{A}=\begin{pmatrix}
                             A_{11} & A_{12} & A_{13} \\
                             0 & A & A_{23} \\
                             0 & 0 & A_{33}
                           \end{pmatrix} ,\qquad\widehat{B}=\begin{pmatrix}
                                             B_1\\
                                            B \\
                                            0
                                          \end{pmatrix},\qquad\widehat{C}= \begin{pmatrix}
                                             0&
                                            C &
                                            C_1
                                          \end{pmatrix}. \end{split}
  \end{equation}

  Then the system $\Sigma$ is called a \textbf{restriction} of $\widehat{\Sigma},$ and it has an expression \begin{equation}\label{restri} \Sigma=(P_{\cX}\widehat{A}\uphar_{\cX},P_{\cX}\widehat{B},\widehat{C}\uphar_{\cX},D;P_{\cX}\widehat{\cX},\cU,\cY;\kappa).\end{equation}
  Dilations and restrictions are denoted by
  \begin{equation}\label{dil-res}
 \widehat{\Sigma}=\dil_{\cX \to \widehat{\cX}} \Sigma, \quad \Sigma=\res_{ \widehat{\cX} \to\cX} \widehat{\Sigma},
 \end{equation} mostly without subscripts when the corresponding state spaces are clear.
  A calculation show that the transfer functions of the original system and its dilation coincide.

 The second way to expand the  system \eqref{colli} is called an  \textbf{embedding}, which is
  any system determined by the system operator 
 \begin{equation}\label{embed}
   T_{\widetilde{\Sigma}}\?=\begin{pmatrix}
                             A & \widetilde{B} \\
                            \widetilde{C} & \widetilde{D}
                           \end{pmatrix}: \begin{pmatrix}
                              \cX     \\  \widetilde{\cU}
                                 \end{pmatrix} \to \begin{pmatrix}
                                 \cX \\  \widetilde{\cY}
                                 \end{pmatrix} \! \qekv  \! \begin{pmatrix}
                             A & \begin{pmatrix}
                                   B & B_1
                                 \end{pmatrix} \\
                           \begin{pmatrix}
                                   C \\ C_1
                                 \end{pmatrix} & \begin{pmatrix}
                                                   D & D_{12} \\
                                                   D_{21} & D_{22}
                                                 \end{pmatrix}
                           \end{pmatrix}: \begin{pmatrix}
                                            \cX \\
                                            \begin{pmatrix}
                                              \cU \\
                                              \cU'
                                            \end{pmatrix}
                                          \end{pmatrix}\to \begin{pmatrix}
                                            \cX \\
                                             \begin{pmatrix}
                                              \cY \\
                                              \cY'
                                            \end{pmatrix}
                                          \end{pmatrix},
 \end{equation}  where $\cU'$ and $\cY'$ are Hilbert spaces.
 The transfer function of the embedded system is \begin{equation}\label{transemb}\begin{split}
                                                             \theta_{\widetilde{\Sigma}}(z)&= \begin{pmatrix}
                                                   D & D_{12} \\
                                                   D_{21} & D_{22}
                                                 \end{pmatrix} +z  \begin{pmatrix}
                                   C \\ C_1
                                 \end{pmatrix}(I_{\cX}-zA)^{-1}\begin{pmatrix}
                                   B & B_1
                                 \end{pmatrix} \\ &= \begin{pmatrix}
                                                       D + zC(I_{\cX}-zA)^{-1}B & D_{12}+zC(I_{\cX}-zA)^{-1}B_1 \\
                                                          D_{21} +zC_1(I_{\cX}-zA)^{-1}B & D_{22} + zC_1(I_{\cX}-zA)^{-1}B_1
                                                     \end{pmatrix} =
                                                             \begin{pmatrix}
                                                                                           \theta_{\Sigma} (z) & \theta_{12}(z)\\
                                                                                            \theta_{21}(z) &  \theta_{22}(z)
                                                                                         \end{pmatrix},
                                                           \end{split}\end{equation}
                                                            where $\theta_{\Sigma}$ is the transfer function of the original system.


  For a passive system there always exist
 a conservative dilation \cite[Theorem 2.1]{Saprikin1} and a conservative embedding \cite[p. 7]{Saprikinetal}.
  Both of these can be constructed such that the system operator of the expanded system is the Julia operator of $T_{\Sigma}$. 
   Such expanded systems are called \textbf{Julia dilation} and \textbf{Julia embedding}, respectively.

   If the passive system \eqref{colli} is simple (controllable, observable, minimal), then so is any conservative embedding \eqref{embed} of it. This follows from the fact that $B\cU \subset \widetilde{B}\widetilde{\cU} $  and $C^*\cY\subset \widetilde{C}^*\widetilde{\cY}.$
   A detailed proof of simplicity can be found  in \cite[Theorem 4.3]{Saprikinetal}. The same argument works also  in the rest of the cases.
    However, it can happen that a simple passive system has no  simple conservative dilation, even in the case when the original system is minimal, see  the example on page 15 in \cite{Saprikinetal}.

\begin{lemma}\label{simp-kar}
  Let $\theta_{\Sigma}$ be the transfer function of a  passive system  $\Sigma=(T_\Sigma;\cX,\cU,\cY;\kappa).$ If $\theta_\Sigma \in \SK,$ then  the spaces $(\cX^c)^\perp,(\cX^o)^\perp$ and $ (\cX^s)^\perp$ are Hilbert subspaces of $\cX.$ Moreover, if one of the spaces $(\cX^c)^\perp,(\cX^s)^\perp$ and $ (\cX^s)^\perp$ is a Hilbert subspace, then so are the others and $\theta_\Sigma \in \SK.$  \end{lemma}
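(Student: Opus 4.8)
The plan is to put the three subspaces on an equal footing, reduce the statement to a single negative-index count for $\theta_\Sigma$, and import that count from the conservative theory. First record the structural facts: $\cX^c,\cX^o\subseteq\cX^s$, so $(\cX^s)^\perp=(\cX^c)^\perp\cap(\cX^o)^\perp$, with $(\cX^o)^\perp$ being $A$-invariant and $(\cX^c)^\perp$ being $A^*$-invariant. Since a closed subspace of a Hilbert subspace of a Pontryagin space is again a Hilbert subspace, each of the hypotheses ``$(\cX^c)^\perp$ Hilbert'' and ``$(\cX^o)^\perp$ Hilbert'' already forces ``$(\cX^s)^\perp$ Hilbert''. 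Hence it suffices to establish the equivalence $\theta_\Sigma\in\SK\Longleftrightarrow(\cX^s)^\perp$ is Hilbert, together with the implication $\theta_\Sigma\in\SK\Rightarrow(\cX^c)^\perp,(\cX^o)^\perp$ Hilbert; the chain of implications in the statement then closes.

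The engine is the conservative case. If $\Sigma$ is conservative then so is $\Sigma^*$, and the observability operator $\cO\colon x\mapsto C(I-zA)^{-1}x$ is a co-isometry onto $\cH(\theta_\Sigma)$ (de Branges--Rovnyak theory, \cite[Ch.~2]{ADRS}). Thus $\cX^o=\ran\cO^*$ is a regular subspace whose negative index equals that of $\cH(\theta_\Sigma)$, i.e. the index of $\theta_\Sigma$. Applying the same to $\Sigma^*$ and using $\cX^c=(\cX^*)^o$ with $\theta_{\Sigma^*}=\theta_\Sigma^{\#}$, the subspace $\cX^c$ is likewise regular with negative index equal to the index of $\theta_\Sigma$. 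Writing $\cX=\cX^o\oplus(\cX^o)^\perp$ and using additivity of the negative index over a regular orthogonal decomposition, $(\cX^o)^\perp$ is Hilbert exactly when the index of $\theta_\Sigma$ equals $\kappa$, i.e. $\theta_\Sigma\in\SK$; the same reasoning applies to $(\cX^c)^\perp$. Together with \cite[Thm.~2.1.2(3)]{ADRS} for $(\cX^s)^\perp$, all four conditions coincide when $\Sigma$ is conservative.

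To descend from conservative to passive I would pass to a conservative dilation $\widehat\Sigma=\dil_{\cX\to\widehat\cX}\Sigma$, which has the same transfer function and a state space $\widehat\cX=\cD\oplus\cX\oplus\cD_*$ of the same negative index $\kappa$, the added summands being Hilbert. From the triangular block form \eqref{dilatio-blok} and the defining relations \eqref{dilation}, a direct computation gives, for $x\in\cX$, that the middle component of $\widehat A^{\,n}x$ (resp. of $\widehat A^{*n}x$) is $A^{n}x$ (resp. $A^{*n}x$), while $\widehat C\widehat A^{\,n}x=CA^{n}x$ and $\widehat B^{*}\widehat A^{*n}x=B^{*}A^{*n}x$. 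This yields the intersection identities $(\cX^\bullet)^\perp=(\widehat\cX^\bullet)^\perp\cap\cX$ for $\bullet\in\{c,o,s\}$. Applying the conservative case to $\widehat\Sigma$, the hypothesis $\theta_\Sigma\in\SK$ makes every $(\widehat\cX^\bullet)^\perp$ a Hilbert subspace, and therefore so is its closed subspace $(\cX^\bullet)^\perp=(\widehat\cX^\bullet)^\perp\cap\cX$. This proves the first assertion of the lemma.

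There remains the converse $(\cX^s)^\perp\text{ Hilbert}\Rightarrow\theta_\Sigma\in\SK$. Now $(\cX^s)^\perp$ Hilbert makes $\cX^s$ regular, and compressing $\Sigma$ to $\cX^s$ produces a simple passive system $\Sigma^{s}$ with $\theta_{\Sigma^{s}}=\theta_\Sigma$ (the coefficients $CA^{n}B$ are unchanged because $\ran B,\ran C^{*}\subseteq\cX^s$ and $\cX^c,\cX^o$ are $A$- and $A^*$-invariant). A conservative embedding $\widetilde{\Sigma^{s}}$ is again simple and lives on the \emph{same} state space $\cX^s$, so no auxiliary Hilbert summands are introduced and no tilting between $\cX$ and defect spaces can occur; by the conservative case its empty simple complement is trivially Hilbert, so its transfer function has index $\kappa$. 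The main obstacle---the step I expect to require the most care---is to conclude that the $(1,1)$-corner $\theta_\Sigma$ has the \emph{same} index as the full embedded function, since a corner can a priori have smaller index. I would settle this through the pole count of Lemma \ref{kar-gen}: every block of $\theta_{\widetilde{\Sigma^{s}}}$ shares the resolvent $(I-zA)^{-1}$, so the index equals the total pole multiplicity in $\dD$, which is carried by the negative $A$-invariant subspace of $\cX^s$ provided by Proposition \ref{invariants}; simplicity forces that subspace to lie in $\cX^c\cap\cX^o$, whence the corner $\theta_\Sigma$ already exhibits all $\kappa$ poles and $\theta_\Sigma\in\SK$.
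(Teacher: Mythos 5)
Your first half is correct and takes a genuinely different route from the paper. For the implication $\theta_\Sigma\in\SK\Rightarrow(\cX^c)^\perp,(\cX^o)^\perp,(\cX^s)^\perp$ Hilbert, the paper simply cites \cite[Lemma 2.5]{Saprikin1}; you instead reprove it by passing to a conservative (Julia) dilation, establishing the intersection identities $(\cX^\bullet)^\perp=(\widehat\cX^\bullet)^\perp\cap\cX$ from the triangular block structure \eqref{dilatio-blok}, and handling the conservative case via the identity $K_{\theta}(w,z)=C(I-zA)^{-1}(I-\bar wA^*)^{-1}C^*$, which makes the observability map a co-isometry onto $\cH(\theta)$ and gives $\mathrm{ind}_-\cX^o=\mathrm{ind}_-\cX^c=\mathrm{ind}(\theta)$. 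That is more self-contained than the paper and the index bookkeeping is sound. The reduction of the three hypotheses to the single hypothesis on $(\cX^s)^\perp$ via $(\cX^s)^\perp=(\cX^c)^\perp\cap(\cX^o)^\perp$ matches the paper.

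The converse direction, however, has a genuine gap exactly at the step you flag. After embedding (the compression to $\cX^s$ is harmless but unnecessary; the paper works with a conservative embedding of $\Sigma$ itself and uses $(\widetilde\cX^s)^\perp\subset(\cX^s)^\perp$), you correctly get $\theta_{\widetilde\Sigma}\in\mathbf S_\kappa(\widetilde\cU,\widetilde\cY)$ from \cite[Theorem 2.1.2(3)]{ADRS}, and you must then show the $(1,1)$ corner $\theta_\Sigma$ has the full index $\kappa$. Your proposed resolution is inadmissible for two reasons. First, it invokes Proposition \ref{invariants}, whose proof rests on Theorem \ref{reps1}, which in turn uses Lemma \ref{simp-kar}; this is circular. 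Second, even granting that proposition, the chain ``simplicity forces the negative $A$-invariant subspace to lie in $\cX^c\cap\cX^o$, whence the corner exhibits all $\kappa$ poles'' is unjustified: simplicity says $\cX^c+\cX^o$ is dense, not that the $\kappa$-dimensional negative subspace sits inside the intersection, and even if it did, nothing in your argument shows that the pole contributions survive in the specific matrix entry $zC(I-zA)^{-1}B$ rather than only in the entries involving $B_1$ or $C_1$. The paper closes this step by a direct pole-count comparison: from the first identity in \eqref{transemb} all four blocks of $\theta_{\widetilde\Sigma}$ share the resolvent $(I-zA)^{-1}$ of the \emph{same} main operator, so $\theta_{\widetilde\Sigma}$ and $\theta_\Sigma$ have the same pole multiplicity in $\dD$, and Lemma \ref{kar-gen} then equates their indices. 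You need either that argument or some other non-circular proof that the corner's pole multiplicity is not smaller; as written, your proof of the converse does not go through.
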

\begin{proof} 
 If $\theta_{\Sigma}\in \SK,$ it is proved in \cite[Lemma 2.5]{Saprikin1} that $(\cX^c)^\perp$ and $(\cX^o)^\perp$   are Hilbert spaces. It easily follows from \eqref{cont1} and \eqref{obs1} that \begin{equation}\label{intse}(\cX^s)^\perp=(\cX^c)^\perp\cap(\cX^o)^\perp,\end{equation} so $(\cX^s)^\perp$ is also a Hilbert space, and the first claim is proved.

 Suppose next  that $(\cX^s)^\perp$   is a Hilbert space. Consider a conservative embedding $\widetilde{\Sigma}$ of $\Sigma,$   and represent the system operator $T_{\widetilde{\Sigma}}$ as in \eqref{embed}.
  The first identity in \eqref{transemb} shows that the transfer function of any embedding of $\Sigma$ has the same number of poles (counting multiplicities)   as $\theta_\Sigma,$ and therefore it follows from 
   Lemma \ref{kar-gen} that the indices of  $\theta_{\Sigma}  $ and $\theta_{\widetilde{\Sigma}}$ coincides. Denote the simple subspace of the embedded system as $\widetilde{\cX}^s.$ Since ${\cX}^s\subset\widetilde{\cX}^s,$ it holds $(\widetilde{\cX}^s)^\perp \subset (\cX^s)^\perp,$ and therefore $(\widetilde{\cX}^s)^\perp$ is also a Hilbert space. It follows  from  \cite[Theorem 2.1.2 (3)]{ADRS} that the transfer function  $\theta_{\widetilde{\Sigma}}$ of $\widetilde{\Sigma}$  belongs to $ \mathbf{S}_{\kappa}(\widetilde{\cU},\widetilde{\cY}),$ which implies now    $\theta_{\Sigma}\in \SK.$ Then the first claim  proved above implies that $(\cX^c)^\perp$ and $ (\cX^o)^\perp$ are Hilbert subspaces.

  If one assumes that $(\cX^c)^\perp$ or $(\cX^o)^\perp$ is a Hilbert space, the identity  \eqref{intse} shows that  $(\cX^s)^\perp$ is a Hilbert space as well. Then the argument above can be applied, and the second claim is proved.

  %

\end{proof}

The \textbf{product} or \textbf{cascade connection} of two  systems
 $\Sigma_1=(A_1,B_1,C_1,D_1;\cX_1,\cU,\cY_1;\kappa_1)$ and $\Sigma_2=(A_2,B_2,C_2,D_2;\cX_2,\cY_1,\cY;\kappa_2)$
is a system $\Sigma_2 \circ \Sigma_1=(T_{\Sigma_2 \circ \Sigma_1};\cX_1 \oplus \cX_2,\cU,\cY;\kappa_1 +\kappa_2)$ such that
 \begin{equation}\label{product1}
  T_{\Sigma_2 \circ \Sigma_1}= \begin{pmatrix}
                             \begin{pmatrix}
                               A_1 & 0 \\
                               B_2C_1 & A_2
                             \end{pmatrix} & \begin{pmatrix}
                                   B_1 \\ B_2D_1
                                 \end{pmatrix} \\
                           \begin{pmatrix}
                                   D_2C_1 & C_2
                                 \end{pmatrix} & D_2D_1
                           \end{pmatrix}: \begin{pmatrix}
                                            \begin{pmatrix}
                                              \cX_1 \\
                                              \cX_2
                                            \end{pmatrix} \\
                                              \cU
                                          \end{pmatrix}\to \begin{pmatrix}
                                            \begin{pmatrix}
                                              \cX_1 \\
                                              \cX_2
                                            \end{pmatrix} \\
                                              \cY
                                          \end{pmatrix}.
 \end{equation} Written in the form \eqref{colli}, one has $\cX= \begin{pmatrix}
                                              \cX_1 \\
                                              \cX_2
                                            \end{pmatrix}$ and
 \begin{equation}\label{prod-oper}  A=\begin{pmatrix}
                               A_1 & 0 \\
                               B_2C_1 & A_2
                             \end{pmatrix},\qquad B=\begin{pmatrix}
                                   B_1 \\ B_2D_1
                                 \end{pmatrix},\qquad C=\begin{pmatrix}
                                   D_2C_1 & C_2
                                 \end{pmatrix},\qquad D=  D_2D_1.       \end{equation}
  Note that $A_2=A\uphar_{\cX_2}$  
  and \begin{equation}\label{product2}
                            \begin{pmatrix}
                               A_1 & 0 & B_1 \\
                               B_2C_1 & A_2 & B_2D_1\\
                                   D_2C_1 & C_2  & D_2D_1
                           \end{pmatrix} = \begin{pmatrix}
                                             I_{\cX_1} & 0 & 0 \\
                                             0 & A_2 & B_2 \\
                                             0 & C_2 & D_2
                                           \end{pmatrix}\begin{pmatrix}
                                             A_1 & 0 & B_1 \\
                                             0 & I_{\cX_2} & 0 \\
                                             C_1 & 0 & D_1
                                           \end{pmatrix}: \begin{pmatrix}
                                              \cX_1 \\
                                              \cX_2
                                            \\
                                              \cU
                                          \end{pmatrix}\to \begin{pmatrix}
                                              \cX_1 \\
                                              \cX_2 \\
                                              \cY
                                          \end{pmatrix}.
                          \end{equation}
                         The product $\Sigma_2 \circ \Sigma_1$ is defined when the incoming space of $\Sigma_2$ is the outgoing space of $\Sigma_1.$ Again, direct computations show that
                          $\theta_{\Sigma_2 \circ \Sigma_1} =\theta_{\Sigma_2}  \theta_{\Sigma_1}$ whenever both functions are defined. For the dual system one has $(\Sigma_2 \circ \Sigma_1)^*=  \Sigma_1^* \circ \Sigma_2^*.$
                          It follows from the identity \eqref{product2}  that the product  $ \Sigma_2 \circ \Sigma_1 $ is
                           conservative (isometric, co-isometric, passive)
                           whenever   $\Sigma_1$ and   $\Sigma_2$ are. Also, if the product is isometric (co-isometric, conservative) and  one factor  of the product is conservative, then the other factor must be isometric (co-isometric, conservative).

                           The product of two systems preserves similarity properties introduced on page \pageref{unsims} in sense that if $\Sigma=\Sigma_2 \circ \Sigma_1$ and $\Sigma'=\Sigma_2' \circ \Sigma_1'$ such that $\Sigma_1$ is unitarily (weakly) similar with $\Sigma_1'$ and $\Sigma_2$ is unitarily (weakly) similar with $\Sigma_2',$ then easy calculations using \eqref{product2} show that $\Sigma$ and $\Sigma'$ are unitarily (weakly) similar.

                            It is known (c.f. e.g. \cite[Theorem 1.2.1]{ADRS}) that if $\Sigma_2 \circ \Sigma_1$ is
                            controllable (observable, simple, minimal), then so are $\Sigma_1$ and $\Sigma_2.$
                             The converse statement is not true. The following lemma  gives necessary and sufficient conditions when the product is observable, controllable or simple. The simple case is handled in \cite[Lemma 7.4]{Saprikinetal}.
 \begin{lemma}\label{product-conds}
  Let $\Sigma_1=(A_1,B_1,C_1,D_1;\cX_1,\cU,\cY_1;\kappa_1), \Sigma_2=(A_2,B_2,C_2,D_2;\cX_2,\cY_1,\cY;\kappa_2)$ and   $\Sigma= \Sigma_2 \circ \Sigma_1.$  Let $\Omega=\overline{\Omega}$ be a symmetric neighbourhood of the origin such that the transfer function $\theta_\Sigma=\theta_{\Sigma_2}\theta_{\Sigma_1}$ of  $\Sigma$ is analytic in  $\Omega.$ Consider the equations
   \begin{align}
     \theta_{\Sigma_2}(z)C_1(I-zA_1)^{-1}x_1&=-C_2(I-zA_2)^{-1}x_2,\quad \text{for all $z \in \Omega$}; \label{pro-ops} \\
   { \theta}_{\Sigma_1}^\#(z)B_2^*(I-zA_2^*) ^{-1}x_2&=-B_1^*(I-zA_1^*)^{-1 } x_1,\quad \text{for all $z \in \Omega$},\label{pro-cont}
   \end{align} where $ x_1 \in \cX_1$ and $x_2 \in \cX_2.$ Then $\Sigma$ is observable if and only if \eqref{pro-ops} has only the trivial solution, and $\Sigma$ is controllable if and only if \eqref{pro-cont} has only the trivial solution. Moreover, $\Sigma$ is simple if and only if  the pair of equations consisting of \eqref{pro-ops} and \eqref{pro-cont} has only the trivial solution. 
 \end{lemma}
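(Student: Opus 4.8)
The plan is to translate each of the three dynamical conditions into a statement about the orthogonal complement of the corresponding subspace in \eqref{cont2}--\eqref{simp2}, and to compute the resolvent of the product main operator $A$ from \eqref{prod-oper} explicitly, exploiting its block lower-triangular structure. Everything is carried out on the symmetric neighbourhood $\Omega=\overline\Omega$ on which $\theta_\Sigma$, and hence both resolvents $(I-zA_1)^{-1}$ and $(I-zA_2)^{-1}$, are analytic; this is exactly the standing hypothesis on $\Omega$.

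First I would treat observability. Since $\Sigma$ is observable precisely when $(\cX^o)^\perp=\{0\}$, and since $\Omega$ is symmetric, the identity \eqref{obs2} together with $\big((I-zA^*)^{-1}\big)^*=(I-\bar z A)^{-1}$ shows that a vector $x\in\cX_1\oplus\cX_2$ with components $x_1\in\cX_1$, $x_2\in\cX_2$ lies in $(\cX^o)^\perp$ if and only if $C(I-zA)^{-1}x=0$ for all $z\in\Omega$. The key computation is then the resolvent of $A$: being block lower-triangular, $I-zA$ inverts to
\[
(I-zA)^{-1}=\begin{pmatrix}(I-zA_1)^{-1}&0\\ z(I-zA_2)^{-1}B_2C_1(I-zA_1)^{-1}&(I-zA_2)^{-1}\end{pmatrix}.
\]
Applying $C=\begin{pmatrix}D_2C_1&C_2\end{pmatrix}$ and collecting the terms acting on $x_1$, the coefficient of $C_1(I-zA_1)^{-1}x_1$ is exactly $D_2+zC_2(I-zA_2)^{-1}B_2=\theta_{\Sigma_2}(z)$ by the definition \eqref{trans}. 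Hence
\[
C(I-zA)^{-1}x=\theta_{\Sigma_2}(z)\,C_1(I-zA_1)^{-1}x_1+C_2(I-zA_2)^{-1}x_2 ,
\]
and the vanishing of this expression for all $z\in\Omega$ is precisely equation \eqref{pro-ops}, which proves the observability criterion.

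For controllability I would pass to the adjoint. By the remark following \eqref{product2} one has $(\Sigma_2\circ\Sigma_1)^*=\Sigma_1^*\circ\Sigma_2^*$, and $\Sigma$ is controllable if and only if $\Sigma^*$ is observable (the controllable subspace of $\Sigma$ equals the observable subspace of $\Sigma^*$, directly from \eqref{cont1} and \eqref{obs1}). Writing out $\Sigma_2^*=(A_2^*,C_2^*,B_2^*,D_2^*;\cX_2,\cY,\cY_1;\kappa_2)$ and $\Sigma_1^*=(A_1^*,C_1^*,B_1^*,D_1^*;\cX_1,\cY_1,\cU;\kappa_1)$, the observability criterion just established --- now applied to the product $\Sigma^*$ with $\Sigma_2^*$ as the inner and $\Sigma_1^*$ as the outer factor, and using $\theta_{\Sigma_1^*}=\theta_{\Sigma_1}^\#$ --- becomes exactly equation \eqref{pro-cont}. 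Finally, simplicity follows at once from $(\cX^s)^\perp=(\cX^c)^\perp\cap(\cX^o)^\perp$, the analogue of \eqref{intse} immediate from \eqref{cont1} and \eqref{obs1}: a vector lies in $(\cX^s)^\perp$ if and only if it solves both \eqref{pro-ops} and \eqref{pro-cont}, so $\Sigma$ is simple if and only if the pair of equations has only the trivial solution.

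The main obstacle is the resolvent computation together with the recognition of $\theta_{\Sigma_2}$ inside $C(I-zA)^{-1}x$; once this algebraic identity is secured, the observability statement is immediate, and the controllability and simplicity statements are purely formal consequences of the duality $(\Sigma_2\circ\Sigma_1)^*=\Sigma_1^*\circ\Sigma_2^*$ and of the intersection formula for $(\cX^s)^\perp$. A minor point requiring care is the correct matching of roles when transposing the observability equation through the adjoint, so that the outer factor supplying the transfer function is $\Sigma_1^*$ and the variables $x_1,x_2$ remain attached to $\cX_1,\cX_2$.
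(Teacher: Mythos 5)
Your proof is correct and follows essentially the same route as the paper: translate observability/controllability/simplicity into the vanishing of $C(I-zA)^{-1}x$ and $B^*(I-zA^*)^{-1}x$ via \eqref{cont2}--\eqref{simp2}, invert the block lower-triangular $I-zA$ explicitly, and recognize $\theta_{\Sigma_2}(z)$ in the resulting expression. The only (harmless) deviation is that you obtain the controllability equation \eqref{pro-cont} by applying the observability criterion to $\Sigma^*=\Sigma_1^*\circ\Sigma_2^*$, whereas the paper repeats the resolvent computation directly for $(I-zA^*)^{-1}$; both yield the same identity.
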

\begin{proof}
  Write the system operator $T_{\Sigma_2 \circ \Sigma_1}$ in \eqref{product1} in the form  \eqref{colli}.  
  It follows from \eqref{cont2}--\eqref{simp2} that
  \begin{align}
     x \in (\cX^o)^\perp  &\qekv C(I-zA)^{-1}x=0 \quad \text{for all $z \in \Omega$}; \label{perps2}\\
       x \in (\cX^c)^\perp  &\qekv B^*(I-zA^*)^{-1}x=0 \quad \text{for all $z \in \Omega$};\label{perps1} \\
      x \in (\cX^s)^\perp  &\qekv B^*(I-zA^*)^{-1}x=0 \quad \text{and}\quad  C(I-A)^{-1}x=0 \quad \text{for all $z \in \Omega$}\label{perps3}.
  \end{align} Decompose $x=x_1 \oplus x_2,$ where $x_1 \in \cX_1$ and $x_2 \in \cX_2.$  With respect to the this decomposition, the definition of the main operator $A$ from  \eqref{prod-oper} yields
  $$  (I-zA)^{-1}  = \begin{pmatrix}
                                                   (I_{\cX_1}-zA_1)^{-1} & 0 \\
                                  z(I_{\cX_2}-zA_2)^{-1}B_2C_1(I_{\cX_1}-zA_1)^{-1}   &  (I_{\cX_2}-zA_2)^{-1}
                                               \end{pmatrix}.   $$
  From this relation and \eqref{prod-oper},   it follows that the right hand side of \eqref{perps2} is equivalent to    \begin{align} \begin{pmatrix}
                                   D_2C_1 & C_2
                                 \end{pmatrix} \begin{pmatrix}
                                                   (I_{\cX_1}-zA_1)^{-1} & 0 \\
                                  z(I_{\cX_2}-zA_2)^{-1}B_2C_1(I_{\cX_1}-zA_1)^{-1}   &  (I_{\cX_2}-zA_2)^{-1}
                                               \end{pmatrix}  \begin{pmatrix}
     x_1 \\
     x_2
   \end{pmatrix}=0 \quad \text{for all $z \in \Omega$}
   . \label{perps5}\end{align}
   Similar calculations show that the right hand side of  \eqref{perps1} is equivalent to  \begin{align}
     \begin{pmatrix}
                                   B_1^* & D_1^*B_2^*
                                 \end{pmatrix}\?
                                 \begin{pmatrix}
                                   (I_{\cX_1}-zA_1^*)^{-1} & z(I_{\cX_1}-zA_1^*)^{-1}C_1^*B_2^*(I_{\cX_2}-zA_2^*)^{-1} \\
                                   0 &  (I_{\cX_2}-zA_2^*)^{-1}
                                 \end{pmatrix}
                                \? \begin{pmatrix}
     x_1 \\
     x_2
   \end{pmatrix}=0 \quad \text{for all $z \in \Omega$}
   .\label{perps4}
  \end{align} Expanding the identity \eqref{perps5} and using the definition of the transfer function $$\theta_{\Sigma_2}(z)=D_2 + zC_2(I_{\cX_2}-zA_2)^{-1}B_2$$ one gets that \eqref{perps5} is equivalent to
  \begin{align*}
    \left(  D_2
     +       C_2   z(I_{\cX_2}-zA_2)^{-1}B_2 \right) C_1(I_{\cX_1}-zA_1)^{-1}x_1 &=-  C_2(I_{\cX_2}-zA_2)^{-1}x_2\\ \qekv
     \theta_{\Sigma_2}(z) C_1(I_{\cX_1}-zA_1)^{-1}x_1 &=-  C_2(I_{\cX_2}-zA_2)^{-1}x_2.
  \end{align*} That is, the identity \eqref{perps5} is equivalent to \eqref{pro-ops}. Similar calculations and the identity
  $$  \theta_{\Sigma_1}^\#(z)=D_1^* + zB_1^*(I_{\cX_1}-zA_1^*)^{-1}C_1^*    $$ shows that the identity \eqref{perps4}
 is equivalent to \eqref{pro-cont}. The results follow now by observing that if the system $\Sigma$ is observable, controllable or simple, then, respectively, $(\cX^o)^\perp=\{0\}$,   $(\cX^c)^\perp=\{0\}$ or $(\cX^s)^\perp=\{0\}$.
\end{proof}

 Part {(iii)} of the theorem below with an additional condition that all the realizations are conservative, is proved in \cite[Theorem 7.3, 7.6]{Saprikinetal}. Similar techniques  will be used to expand this result as follows.
 \begin{theorem} \label{preserv}
 Let $\theta \in \SK$ and let $\theta=\theta_r B_r^{-1}=B_l^{-1}\theta_l$ be its Kre\u{\i}n--Langer factorizations.
 Suppose that \begin{align*}\Sigma_{\theta_r}&=(T_{\Sigma_{\theta_r}},\cX^+_r,\cU,\cY,0),& \Sigma_{\theta_l}&=(T_{\Sigma_{\theta_l}},\cX^+_l,\cU,\cY,0), \\ \Sigma_{B_r^{-1}}&=(T_{\Sigma_{B_r^{-1}}},\cX^-_r,\cU,\cU,\kappa),&  \Sigma_{B_l^{-1}}&=(T_{\Sigma_{B_l^{-1}}},\cX^-_l,\cY,\cY,\kappa),\end{align*} are the realizations of $\theta_r,\theta_l,B_r^{-1}$ and $B_l^{-1},$ respectively.  Then:
 \begin{itemize}
   \item[{\rm (i)}]  If $\Sigma_{\theta_r} $ and $ \Sigma_{B_r^{-1}}$ are observable and passive, then so is  $\Sigma_{\theta_r} \circ \Sigma_{B_r^{-1}}$;
   \item[{\rm (ii)}] If $\Sigma_{\theta_l} $ and $ \Sigma_{B_l^{-1}}$ are controllable and passive, then so is  $\Sigma_{B_l^{-1}} \circ   \Sigma_{\theta_l}  $;
   \item[{\rm (iii)}]  If all the realizations described above are simple passive, then so are $\Sigma_{\theta_r} \circ \Sigma_{B_r^{-1}}  $ and $\Sigma_{B_l^{-1}}\circ   \Sigma_{\theta_l} .$
 \end{itemize}
 \end{theorem}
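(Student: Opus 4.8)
The plan is to derive all three parts from Lemma \ref{product-conds}, translating the functional equations \eqref{pro-ops}--\eqref{pro-cont} into the injectivity of a coisometry attached to a Kre\u{\i}n--Langer factorization. Throughout recall that transfer functions multiply along a product and negative indices add, so each product realizes $\theta\in\SK$ on a state space of negative index $\kappa+0=\kappa$; in particular the products are passive and Lemma \ref{simp-kar} applies to them as well as to the factors.

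For (i) put $\Sigma_1=\Sigma_{B_r^{-1}}$ and $\Sigma_2=\Sigma_{\theta_r}$, so by Lemma \ref{product-conds} the product is observable iff \eqref{pro-ops} has only the trivial solution. With $g_1(z)=C_1(I-zA_1)^{-1}x_1$ and $g_2(z)=C_2(I-zA_2)^{-1}x_2$, equation \eqref{pro-ops} reads $g_2+\theta_r g_1=0$. The observability operator of a passive realization carries the state space into the de Branges--Rovnyak space of its transfer function (see \cite[Ch.~2]{ADRS}), so $g_2\in\cH(\theta_r)$ and $g_1\in\cH(B_r^{-1})$. The kernel identity $K_\theta(w,z)=K_{\theta_r}(w,z)+\theta_r(z)\,K_{B_r^{-1}}(w,z)\,\theta_r(w)^*$, read off from $\theta=\theta_r B_r^{-1}$, exhibits a coisometry $V\colon\cH(\theta_r)\oplus\cH(B_r^{-1})\to\cH(\theta)$, $V(f,h)=f+\theta_r h$; the no-common-zero clause of Theorem \ref{krein-langer-fact}, which holds since $\theta\in\SK$ has index exactly $\kappa$, forces $V$ to be injective. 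Hence $g_2+\theta_r g_1=V(g_2,g_1)=0$ yields $g_1=g_2=0$, and observability of the two factors gives $x_1=x_2=0$.

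Part (ii) follows from (i) by duality. Passive operators between Pontryagin spaces of equal negative index are bi-contractive, so the adjoint factors are again passive, and controllability of a system is observability of its adjoint. Since $(\Sigma_{B_l^{-1}}\circ\Sigma_{\theta_l})^*=\Sigma_{\theta_l}^*\circ\Sigma_{B_l^{-1}}^*$ realizes the right Kre\u{\i}n--Langer factorization $\theta^\#=\theta_l^\#(B_l^\#)^{-1}$ with observable passive factors $\Sigma_{\theta_l}^*$ and $\Sigma_{B_l^{-1}}^*$, part (i) applied to $\theta^\#$ shows this adjoint product is observable, i.e. $\Sigma_{B_l^{-1}}\circ\Sigma_{\theta_l}$ is controllable. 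For (iii) it suffices to treat $\Sigma_{\theta_r}\circ\Sigma_{B_r^{-1}}$, the other product being handled analogously with the roles of \eqref{pro-ops} and \eqref{pro-cont} interchanged. By Lemma \ref{product-conds}, simplicity is equivalent to the pair \eqref{pro-ops}--\eqref{pro-cont} having only the trivial solution. The argument of (i) applied to \eqref{pro-ops} gives $g_1=g_2=0$, so $x_1,x_2$ are unobservable in their factors. Equation \eqref{pro-cont} is the observability equation of the adjoint product $\Sigma_{B_r^{-1}}^*\circ\Sigma_{\theta_r}^*$, associated with the left Kre\u{\i}n--Langer factorization $\theta^\#=(B_r^\#)^{-1}\theta_r^\#$; the analogous kernel identity yields a coisometry $W\colon\cH((B_r^\#)^{-1})\oplus\cH(\theta_r^\#)\to\cH(\theta^\#)$, $W(f,h)=f+(B_r^\#)^{-1}h$, again injective by Theorem \ref{krein-langer-fact}. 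Thus $h_1=B_1^*(I-zA_1^*)^{-1}x_1$ and $h_2=B_2^*(I-zA_2^*)^{-1}x_2$ both vanish, so $x_1,x_2$ are uncontrollable in their factors. By \eqref{intse} each $x_i\in(\cX_i^s)^\perp$, which is trivial by simplicity of the factors; hence $x_1=x_2=0$.

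The main obstacle is the injectivity of $V$ and $W$ in the indefinite setting. That each is a coisometry onto the relevant reproducing kernel Pontryagin space is the sum-of-kernels construction; what must be verified is triviality of its kernel, which is precisely the reproducing-kernel incarnation of the no-common-zero condition of Theorem \ref{krein-langer-fact}. The delicate case is the arrangement in \eqref{pro-cont}, where the inverse Blaschke factor multiplies from the outside so that a naive comparison of poles no longer forces cancellation; here the equality of indices (via Lemma \ref{simp-kar}) together with an adaptation of the conservative-case techniques of \cite{Saprikinetal} to passive colligations is what secures the injectivity of $W$.
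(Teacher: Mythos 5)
Your treatment of parts (i) and (ii) is essentially the paper's argument in reproducing-kernel clothing: the injectivity of $V$ on $\cH(\theta_r)\oplus\cH(B_r^{-1})$ is exactly the observation that every nonzero element of $\cH(B_r^{-1})$ has poles at the zeros of $B_r$, which $\theta_r$ cannot cancel because of the no-common-zero clause of Theorem \ref{krein-langer-fact}, while the right-hand side of \eqref{pro-ops} is holomorphic on $\dD$. (You do implicitly use that an observable passive realization of $B_r^{-1}$ is automatically conservative and minimal, via \cite[Theorems 9.4, 10.2]{Saprikinetal}, to place $g_1$ in $\cH(B_r^{-1})$; that should be said.) Part (ii) by duality is exactly what the paper does.

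Part (iii) contains a genuine gap: the claimed injectivity of $W\colon(f,h)\mapsto f+(B_r^{\#})^{-1}h$ is \emph{false} in general. The factorization $\theta^{\#}=(B_r^{\#})^{-1}\theta_r^{\#}$ is a left Kre\u{\i}n--Langer factorization, and for that arrangement the inverse Blaschke factor multiplies from the outside, so the zeros of $B_r^{\#}$ can cancel the poles of elements of $\cH((B_r^{\#})^{-1})$; no pole comparison, equality of indices, or "adaptation of conservative-case techniques" can rescue the statement, because the paper's own Example \ref{counter} (via Theorem \ref{obspre} and \cite[p.~149]{ADRS}) exhibits nonzero $h_1\in\cH(S_l)$, $h_2\in\cH(b^{-1})$ with $h_2+b^{-1}h_1=0$ --- precisely a nonzero element of $\ker W$ for a function of the relevant form. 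The correct route, and the one the paper takes, is to use the two equations \emph{sequentially} rather than independently: first, \eqref{pro-ops} and the argument of (i) give $C_1(I-zA_1)^{-1}x_1\equiv 0$, whence $x_1=0$ because the simple passive realization $\Sigma_{B_r^{-1}}$ is automatically minimal, and $x_2\in({\cX^+_r}^{o})^\perp$; then, with $x_1=0$, equation \eqref{pro-cont} collapses to $B_r^{-1\#}(z)B_2^*(I-zA_2^*)^{-1}x_2\equiv 0$, and since $B_r^{-1\#}(z)$ has trivial kernel for each fixed $z$, one gets $B_2^*(I-zA_2^*)^{-1}x_2\equiv 0$, i.e.\ $x_2\in({\cX^+_r}^{c})^\perp$. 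Only then does \eqref{intse} and simplicity of $\Sigma_{\theta_r}$ give $x_2=0$. Your version asserts uncontrollability of $x_1$ and $x_2$ from \eqref{pro-cont} alone, which is exactly the step that fails.
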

 \begin{proof} 
Suppose first that 
  $ \Sigma_{B_r^{-1}}$ is a  simple passive system and    $ \Sigma_{\theta_r}$ is a passive system.
   The results from \cite[Theorems 9.4 and 10.2]
   {Saprikinetal} show that all the simple passive realizations of $B_r^{-1}$ are conservative and minimal. Thus, the assumptions guarantees that   $ \Sigma_{B_r^{-1}}$ is conservative and minimal. Represent the system operators $T_{\Sigma_{B_r^{-1}}}$ and $T_{\Sigma_{\theta_r}}$ as
\begin{equation}\label{repre}   T_{\Sigma_{B_r^{-1}}}  = \begin{pmatrix}
                                      A_1 & B_1 \\
                                      C_1 & D_1
                                    \end{pmatrix} : \begin{pmatrix}
                                                     \cX^-_r  \\
                                                     \cU
                                                    \end{pmatrix} \to \begin{pmatrix}
                                                     \cX^-_r  \\
                                                     \cU
                                                    \end{pmatrix}  ,\qquad  T_{\Sigma_{\theta_r}}  =\begin{pmatrix}
                                      A_2 & B_2 \\
                                      C_2 & D_2
                                    \end{pmatrix}   \begin{pmatrix}
                                                     \cX^+_r  \\
                                                     \cU
                                                    \end{pmatrix} \to \begin{pmatrix}
                                                     \cX^+_r  \\
                                                     \cY
                                                    \end{pmatrix}.    \end{equation}
  Let $\Omega=\overline{\Omega}$ be a symmetric neighbourhood of the origin  such that $B_r^{-1}$  is analytic in $\Omega.$
   Suppose that $x_1 \in   \cX^-_r$ and $x_2 \in \cX^+_r $ satisfy
  \begin{equation}\label{cannotbe}
       \theta_{r}(z)C_1(I-zA_1)^{-1}x_1=-C_2(I-zA_2)^{-1}x_2,\quad \text{for all $z \in \Omega$}.\end{equation}
       The space $\cX^-_r $  is $\kappa$-dimensional anti-Hilbert space, and all the poles of $B_r^{-1}$ are also  poles of $C_1(I-zA_1)^{-1}x_1.$   Since $ \cX^+_r$ is a Hilbert space, the operator $A_2$ is a Hilbert space contraction,  and $(I-zA_2)^{-1}$ exists for all $z \in \dD.$ That is, the right hand side of \eqref{cannotbe} is holomorphic in $\dD,$ and then so is the left hand side also. Since $\theta_r$ and $B_r$ have no common zeros in the sense of Theorem \ref{krein-langer-fact} and the zeros of $B_r$ are the poles of $B_r^{-1}$, the factor $\theta_r(z)$ cannot cancel out the poles of $C_1(I-zA_1)^{-1}x_1$ (For a more detailed argument, see the proof of \cite[Theorem 7.3]{Saprikinetal}).
       %
       %
       %
        That is,  $ \theta_{r}(z)C_1(I-zA_1)^{-1}x_1    $ is holomorphic in $\dD$ only if   $C_1(I-zA_1)^{-1}x_1\equiv0.$ Then also $C_2(I-zA_2)^{-1}x_2\equiv0,$  and it follows from \eqref{obs2} that  $x_1\in ({\cX^-_r}^o)^\perp$    and $x_2 \in ({\cX^+_r}^o)^\perp.$ Since the system $\Sigma_{B_r^{-1}}$ is minimal, $x_1=0.$ If  the system $\Sigma_{\theta_r}$ is observable, then   $x_2=0,$ and it follows from  Lemma \ref{product-conds} that $\Sigma_{\theta_r} \circ \Sigma_{B_r^{-1}}$ is observable and passive, and part (i) is proven.

        Next suppose that  $x_1$ and $x_2$ satisfy \eqref{cannotbe} and
     \begin{equation}\label{cannotbe1} B^{-1\#}_{r}(z)B_2^*(I-zA_2^*) ^{-1}x_2=-B_1^*(I-zA_1^*)^{-1 } x_1,\quad \text{for all $z \in \Omega$}. \end{equation} The argument above gives  $x_1=0$ and $x_2 \in ({\cX^+_r}^o)^\perp.$ Then,
   $$B^{-1\#}_{r}(z)B_2^*(I-zA_2^*) ^{-1}x_2\equiv0. $$  Since $B^{-1\#}_{r}(z)$ has just the trivial kernel for every $z \in \Omega,$ also $B_2^*(I-zA_2^*) ^{-1}x_2\equiv0. $ 
   The identity  \eqref{cont2} implies now   $x_2 \in ({\cX^+_r}^c)^\perp,$  and therefore $$x_2 \in ({\cX^+_r}^c)^\perp \cap ({\cX^+_r}^o)^\perp =({\cX^+_r}^s)^\perp.  $$ If  the system $\Sigma_{\theta_r}$ is simple, then   $x_2=0,$ and it follows from  Lemma \ref{product-conds} that $\Sigma_{\theta_r} \circ \Sigma_{B_r^{-1}}$ is simple and passive, and the first claim of the part (iii) is proven.
   The other claim  in part (iii) and also part (ii) follow now by considering the dual systems.
 \end{proof}

The  product of the form  $\Sigma_{B_l^{-1}}\circ   \Sigma_{\theta_l} $ does not necessarily preserve observability as is shown in Example \ref{counter}  below. A counter-example is constructed with the help of the following realization result.  For the proof and more details, see \cite[Theorem 2.2.1]{ADRS}.

\begin{lemma}\label{canonical}
Let $S\in \SK$ and let $\cH(S)$ be the Pontryagin space induced by the reproducing kernel \eqref{kernel1}. Then   the system $\Sigma=(A,B,C,D,\cH(S),\cU,\cY ; \kappa)$ where
\begin{equation}\left\{\begin{aligned}\label{canonco}
  &A: h(z) \mapsto \frac{h(z)-h(0)}{z}, \qquad &B&: u \mapsto \frac{S(z)-S(0)}{z}u, \\
  &C:  h(z) \mapsto h(0),\qquad &D&: u \mapsto S(0)u,
\end{aligned} \right. \end{equation} is co-isometric  and observable realization of $S.$ Moreover, $C(I-zA)^{-1}h=h(z)$ for $h\in \cH(S). $
\end{lemma}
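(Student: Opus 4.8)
The plan is to recognize $\Sigma$ as the canonical backward-shift (de Branges--Rovnyak) realization in the reproducing kernel Pontryagin space $\cH(S)$, and to establish the assertions in the order: well-definedness of the operators, the resolvent formula $C(I-zA)^{-1}h=h(z)$, the identification $\theta_\Sigma=S$, observability, and finally co-isometry. First I would note that $A,B,C,D$ are bounded on $\cH(S)$: that the difference-quotient operator $h\mapsto (h(z)-h(0))/z$ leaves $\cH(S)$ invariant and is bounded, and that $Bu:z\mapsto (S(z)-S(0))u/z$ lies in $\cH(S)$, are standard properties of $\cH(S)$ supplied by the reproducing kernel theory cited in the introduction (\cite{ADRS}), while continuity of $C$ is continuity of point evaluation at the origin. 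Continuity of point evaluation also gives that the kernel sections $K_S(w,\cdot)f\in\cH(S)$ satisfy $\langle h,K_S(w,\cdot)f\rangle_{\cH(S)}=\langle h(w),f\rangle_{\cY}$; their finite linear combinations are dense, and this is the set on which I would test all subsequent identities.

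Next I would prove the resolvent formula. Writing $h(z)=\sum_{n\ge0}h_nz^n$ near the origin, the $n$-th Taylor coefficient of $A^nh$ at $0$ is $h_n$, so $CA^nh=h_n$, and the Neumann series gives $C(I-zA)^{-1}h=\sum_{n\ge0}z^nCA^nh=\sum_{n\ge0}h_nz^n=h(z)$ near $0$, extended by analyticity. Two consequences follow at once. Taking $h=Bu$ yields $C(I-zA)^{-1}Bu=(Bu)(z)=(S(z)-S(0))u/z$, whence $\theta_\Sigma(z)u=Du+zC(I-zA)^{-1}Bu=S(0)u+(S(z)-S(0))u=S(z)u$, so $\Sigma$ realizes $S$. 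Moreover, by the characterization of $(\cX^o)^\perp$ coming from \eqref{obs2}, an element $h\in(\cX^o)^\perp$ satisfies $C(I-zA)^{-1}h=h(z)\equiv0$, forcing $h\equiv0$; hence $(\cX^o)^\perp=\{0\}$ and $\Sigma$ is observable.

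The main work, and the step I expect to be the real obstacle, is co-isometry, i.e. $T_\Sigma T_\Sigma^*=I_{\cX\oplus\cY}$. I would first compute the adjoints on kernel sections using only the reproducing property, obtaining $C^*f=K_S(0,\cdot)f$, $D^*=S(0)^*$, and, from $\langle A^*K_S(w,\cdot)f,h\rangle$ and $\langle B^*K_S(w,\cdot)f,u\rangle$,
$$A^*K_S(w,\cdot)f=\frac{1}{\bar w}\bigl(K_S(w,\cdot)f-K_S(0,\cdot)f\bigr),\qquad B^*K_S(w,\cdot)f=\frac{1}{\bar w}\bigl(S(w)^*-S(0)^*\bigr)f.$$
Then $T_\Sigma T_\Sigma^*=I$ reduces to the four block identities $CC^*+DD^*=I_\cY$, $CA^*+DB^*=0$, its adjoint $AC^*+BD^*=0$, and $AA^*+BB^*=I_{\cH(S)}$. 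The first two are short direct substitutions: $CC^*+DD^*=(I-S(0)S(0)^*)+S(0)S(0)^*=I$, and the $S(0)$-prefactors in $CA^*K_S(w,\cdot)f$ and $DB^*K_S(w,\cdot)f$ cancel term by term.

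The genuinely laborious identity is $AA^*+BB^*=I_{\cH(S)}$, which is where I expect the difficulty to concentrate. I would verify it on each section $K_S(w,\cdot)f$ by writing out $(AA^*K_S(w,\cdot)f)(z)$ and $(BB^*K_S(w,\cdot)f)(z)$ as explicit $\cY$-valued functions and collecting everything over the common denominator $1-z\bar w$; the decisive cancellation of the spurious $1/\bar w$ and of the term $z\,(S(z)-S(0))/z$ is produced exactly by the kernel's functional equation $(1-z\bar w)K_S(w,z)=I-S(z)S(w)^*$, and the sum collapses to $K_S(w,z)f$. Density of the kernel sections then upgrades all four identities from the test set to all of $\cH(S)\oplus\cY$, which establishes co-isometry and completes the proof.
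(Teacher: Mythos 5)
Your proposal is correct. The paper itself gives no proof of this lemma---it simply refers the reader to \cite[Theorem 2.2.1]{ADRS}---and your argument (boundedness of the difference-quotient operators deferred to standard reproducing-kernel theory, the resolvent formula via Taylor coefficients, observability from \eqref{obs2}, and verification of $T_\Sigma T_\Sigma^*=I$ on the dense set of kernel sections, with the key cancellation in $AA^*+BB^*=I$ coming from $(1-z\bar w)K_S(w,z)=I-S(z)S(w)^*$) is essentially the standard verification carried out in that reference.
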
 The system $\Sigma$ in Lemma \ref{canonical} is called a \textbf{canonical co-isometric realization} of $S.$

If the systems $\Sigma_1$ and $\Sigma_2$ in Lemma \ref{product-conds} have additional properties,  a criterion for observability that does not explicitly depend on a system operator can be obtained.

%

  \begin{theorem} \label{obspre}  Let $\Sigma_1\!=\!(A_1,B_1,C_1,D_1;\cX_1,\cU,\cY_1; \? \kappa_1\?)$ and $ \Sigma_2\!=\!(A_2,B_2,C_2,D_2;\cX_2,\cY_1,\cY;\?\kappa_2\?)$ be co-isometric and observable realizations of the functions $S_1 \in \mathbf{S}(\cU,\cY_1)$ and $S_2 \in \mathbf{S}(\cY_1,\cY)$, respectively. Then  $\Sigma= \Sigma_2 \circ \Sigma_1$
  is co-isometric observable realization of $S=S_2S_1$ if and only if the following two conditions hold:
  \begin{itemize}
   \item[{\rm(i)}] $\cH(S)=S_2\cH(  S_1) \oplus \cH(  S_2 );$
   \item[{\rm(ii)}] The mapping
$ h_1 \mapsto S_2h_1$
 is an isometry from $\cH(  S_1) $ to $S_2\cH(  S_1).$
  \end{itemize}\end{theorem}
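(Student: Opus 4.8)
The plan is to reduce to the canonical realizations of Lemma \ref{canonical}, write down the observability operator of the product explicitly, and then read off the orthogonality and isometry data from the requirement that this operator be isometric.

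First I would dispose of everything except observability. By \eqref{product2} the product $\Sigma=\Sigma_2\circ\Sigma_1$ is co-isometric and $\theta_\Sigma=\theta_{\Sigma_2}\theta_{\Sigma_1}=S_2S_1=S$, so $\Sigma$ is automatically a co-isometric realization of $S$, and the entire content is the observability criterion. Since $S_1,S_2\in\So$ and the realizations are observable, the state spaces are Hilbert spaces (their negative indices equal the indices $0$ of $S_1,S_2$), hence so are $\cX_1\oplus\cX_2$ and $\cH(S)$. Because two co-isometric observable realizations of the same function are unitarily similar \cite[Theorem 2.1.3]{ADRS} and the product preserves unitary similarity (as noted above), I may replace $\Sigma_1,\Sigma_2$ by the canonical co-isometric realizations of $S_1,S_2$ on $\cH(S_1),\cH(S_2)$; observability is invariant under this replacement, and conditions (i)--(ii) depend only on $S_1,S_2,S$.

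Next I would compute the observability operator. For the canonical realizations Lemma \ref{canonical} gives $C_j(I-zA_j)^{-1}x_j=x_j(z)$, and $\theta_{\Sigma_2}=S_2$, so the computation behind Lemma \ref{product-conds} yields
\[
  W_o(x_1,x_2):=C(I-zA)^{-1}(x_1\oplus x_2)=S_2x_1+x_2\in\cH(S),\qquad x_1\in\cH(S_1),\ x_2\in\cH(S_2).
\]
Inserting $S=S_2S_1$ into \eqref{kernel1} gives the kernel identity $K_S(w,z)=K_{S_2}(w,z)+S_2(z)K_{S_1}(w,z)S_2^*(w)$, which shows that $\cH(S_2)$ and $S_2\cH(S_1)$ are contractively contained in $\cH(S)$, so that conditions (i)--(ii) are meaningful. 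By \eqref{pro-ops} of Lemma \ref{product-conds}, $\Sigma$ is observable precisely when $W_o$ is injective; since $\Sigma$ is co-isometric, $W_o$ is a coisometry of $\cX_1\oplus\cX_2$ onto $\cH(S)$, and for a coisometry injectivity is equivalent to being unitary, that is, to being isometric.

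It then remains to show that $W_o$ is isometric if and only if (i)--(ii) hold, which I would do by polarizing
\[
  \|S_2x_1+x_2\|_{\cH(S)}^2=\|x_1\|_{\cH(S_1)}^2+\|x_2\|_{\cH(S_2)}^2.
\]
Putting $x_1=0$ forces the inclusion $\cH(S_2)\hookrightarrow\cH(S)$ to be isometric; putting $x_2=0$ gives exactly (ii), that $x_1\mapsto S_2x_1$ is isometric into $\cH(S)$; and the vanishing of the remaining cross term $\RE\langle S_2x_1,x_2\rangle_{\cH(S)}$ for all $x_1,x_2$ (using also $ix_2$) says $S_2\cH(S_1)\perp\cH(S_2)$ in $\cH(S)$. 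Together with surjectivity of $W_o$, i.e.\ $\cH(S)=S_2\cH(S_1)+\cH(S_2)$, this is the orthogonal decomposition (i); conversely (i)--(ii) reassemble the displayed identity, so $W_o$ is isometric. The main obstacle is the careful bookkeeping of the three norms --- on $\cH(S_1)$, on $\cH(S_2)$, and the norm that $S_2\cH(S_1)$ inherits as a subspace of $\cH(S)$ --- and checking that the orthogonal-direct-sum assertion (i) encodes precisely the isometric inclusion of $\cH(S_2)$ together with orthogonality, while (ii) supplies the isometry on the $S_2\cH(S_1)$ summand.
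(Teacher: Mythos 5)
Your proof is correct, and for the necessity direction it takes a genuinely different route from the paper. The paper also reduces to the canonical models of Lemma \ref{canonical} and to the injectivity statement \eqref{pro-obs2}, but from there it argues via the complementation theory of \cite{ADRS}: it first gets $S_2\cH(S_1)\cap\cH(S_2)=\{0\}$ from injectivity, then invokes \cite[Theorem 4.1.1]{ADRS} to see that $\cH(S_2)$ and $S_2\cH(S_1)$ are contractively contained in $\cH(S)$ and that $h_1\mapsto S_2h_1$ is a partial isometry (hence an isometry, giving (ii)), and finally uses \cite[Theorem 1.5.3]{ADRS} to upgrade the contractive containments to isometric ones and obtain the orthogonal decomposition (i). You instead exploit the single structural fact that the observability operator $W_o\colon x_1\oplus x_2\mapsto S_2x_1+x_2$ of a co-isometric realization is a coisometry of the state space onto $\cH(S)$ (equivalently, $K_S(w,z)=C(I-zA)^{-1}(I-\bar wA^*)^{-1}C^*$ for co-isometric colligations), so that observability, i.e.\ injectivity of $W_o$, is equivalent to $W_o$ being unitary; polarizing $\|S_2x_1+x_2\|^2_{\cH(S)}=\|x_1\|^2_{\cH(S_1)}+\|x_2\|^2_{\cH(S_2)}$ then yields exactly (i) and (ii), and reassembling it gives the converse. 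This buys a more self-contained and symmetric argument in which necessity and sufficiency come from one identity, at the cost of relying on the coisometry property of $W_o$, which you assert without reference; since that fact is the crux of your argument (it cannot be derived from unitary similarity with the canonical model, as that presupposes observability), you should justify it, e.g.\ by the kernel factorization above or a citation to \cite[Theorem 2.1.2]{ADRS}. Your sufficiency argument is also slightly heavier than needed: as in the paper, (i) alone forces $h_2=0$ and $S_2h_1=0$ in \eqref{pro-obs2} because $S_2h_1=-h_2$ lies in $S_2\cH(S_1)\cap\cH(S_2)=\{0\}$, and (ii) then gives $h_1=0$, without reconstituting the full isometry identity.
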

  \begin{proof}
     Since all co-isometric observable realizations of $S_1$ and $S_2$ are unitarily similar, it can be assumed that $\Sigma_1$ and $\Sigma_2$ are realized as in Lemma \ref{canonical}.  Let $\Omega$ be a neighbourhood of the origin such that $S_1$ and $S_2$ are analytic in $\Omega.$ By combining  Lemma \ref{canonical} and the condition \eqref{pro-ops} in Lemma \ref{product-conds}, it follows that $\Sigma$ is observable if and only if
     \begin{equation}\label{pro-obs2}
        S_2(z)h_1(z)=-h_2(z), \quad h_1 \in \cH(S_1),  \quad h_2 \in \cH(S_2),
     \end{equation}   holds for every $z \in \Omega$ only when $h_1\equiv0$ and $h_2\equiv0.$

     Assume the conditions {\rm(i)} and {\rm(ii)}. 
       Then $S_2(z)h_1(z)=-h_2(z)$ can hold only if $h_2\equiv0.$ Since the mapping $ h_1 \mapsto S_2h_1$ is an isometry, it has only the trivial kernel. Therefore $h_1\equiv0,$  and sufficiency is proven.

       Conversely, assume  that $\Sigma$ is co-isometric and observable. The condition \eqref{pro-obs2} shows that  the  mapping $ h_1 \mapsto S_2h_1$  has only the trivial kernel, and \begin{equation}\label{inters}S_2\cH(  S_1)\cap \cH(  S_2 )=\{0 \}.\end{equation} It now follows from \cite[Theorem 4.1.1]{ADRS}  that $\cH(  S_1) $ and  $S_2\cH(  S_1)$ are  contained contractively in $\cH(  S), $ and  $ h_1 \mapsto S_2h_1$ is a partial isometry. Since it has only the trivial kernel, it is an isometry, and {\rm(ii)} holds. Since \eqref{inters} holds and $\cH(  S_1)$  and $S_2\cH(  S_1)$ are  contained contractively in $\cH(  S), $ a result from
       \cite[Theorem 1.5.3]{ADRS}  shows that   $\cH(  S_1) $ and  $S_2\cH(  S_1)$ are actually  contained isometrically in $\cH(  S).$ Therefore $\cH(  S_1) ^\perp=S_2\cH(  S_1)$ so the condition {\rm(i)} holds and the necessity is proven.  \end{proof}

  The dual version can be obtained by  using the  \textbf{canonical isometric} realizations from \cite[Theorem 2.2.2]{ADRS} or taking adjoint systems in Theorem \ref{obspre}.

\begin{theorem} \label{contpre}
      Let $\Sigma_1\!=\!(A_1,B_1,C_1,D_1;\cX_1,\cU,\cY_1; \? \kappa_1\?)$ and $ \Sigma_2\!=\!(A_2,B_2,C_2,D_2;\cX_2,\cY_1,\cY;\?\kappa_2\?)$ be isometric and controllable realizations of the functions $S_1 \in \mathbf{S}(\cU,\cY_1)$ and $S_2 \in \mathbf{S}(\cY_1,\cY)$, respectively. Then  $\Sigma= \Sigma_2 \circ \Sigma_1$
  is isometric and controllable realization of $S=S_2S_1$ if and only if the following two conditions hold:
  \begin{itemize}
       \item[{\rm(i)}] $\cH({S^\#})={S}^\#_1\cH(  {S}^\#_2) \oplus \cH(  {S}^\#_1 );$
       \item[{\rm(ii)}]  The mapping
$ h_2 \mapsto S^\#_1h_2$
 is an isometry from $\cH(  S^\#_2) $ to $S_1^\#\cH(  S^\#_2).$
  \end{itemize}
\end{theorem}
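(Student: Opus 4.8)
The plan is to obtain Theorem~\ref{contpre} as the dual of Theorem~\ref{obspre}, exploiting the relation $(\Sigma_2\circ\Sigma_1)^*=\Sigma_1^*\circ\Sigma_2^*$ established earlier. First I would observe that $\Sigma_1$ is isometric and controllable if and only if its adjoint $\Sigma_1^*$ is co-isometric and observable, and likewise for $\Sigma_2$; moreover $\theta_{\Sigma_j^*}={\theta_{\Sigma_j}}^\#=S_j^\#$. Thus $\Sigma_1^*$ is a co-isometric observable realization of $S_1^\#\in\mathbf S(\cY_1,\cU)$ and $\Sigma_2^*$ is a co-isometric observable realization of $S_2^\#\in\mathbf S(\cY,\cY_1)$. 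The key bookkeeping point is that because the incoming/outgoing spaces swap under adjunction, the composable order reverses: the product to which Theorem~\ref{obspre} applies is $\Sigma_1^*\circ\Sigma_2^*$, realizing $S_1^\#S_2^\#=(S_2S_1)^\#=S^\#$.

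Next I would apply Theorem~\ref{obspre} verbatim to the pair $(\Sigma_2^*,\Sigma_1^*)$ playing the roles of $(\Sigma_1,\Sigma_2)$ there, with $S_2^\#$ in the role of ``$S_1$'' and $S_1^\#$ in the role of ``$S_2$''. That theorem states $\Sigma_1^*\circ\Sigma_2^*$ is co-isometric and observable if and only if (i$'$) $\cH(S^\#)=S_1^\#\cH(S_2^\#)\oplus\cH(S_1^\#)$ and (ii$'$) the map $h_2\mapsto S_1^\# h_2$ is an isometry from $\cH(S_2^\#)$ to $S_1^\#\cH(S_2^\#)$. These are exactly conditions (i) and (ii) in the present statement. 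Finally, since $\Sigma=\Sigma_2\circ\Sigma_1$ satisfies $\Sigma^*=\Sigma_1^*\circ\Sigma_2^*$, and a passive system is co-isometric and observable precisely when its adjoint is isometric and controllable, the equivalence for $\Sigma^*$ transfers back to the asserted equivalence for $\Sigma$.

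The one step that requires genuine care, rather than formal dualization, is verifying that the controllability of $\Sigma$ corresponds correctly to the observability of $\Sigma^*$ under the identifications above, and that the direct-sum and isometry conditions are the literal adjoint translations. Concretely, I would check that $\cX^c$ of $\Sigma$ equals $(\cX^o)^{\perp\perp}$ of $\Sigma^*$ in the sense dictated by \eqref{cont1}--\eqref{obs1}, so that ``$\Sigma$ controllable'' $\Longleftrightarrow$ ``$\Sigma^*$ observable''; this is immediate from the definitions since replacing $(A,B,C)$ by $(A^*,C^*,B^*)$ interchanges \eqref{cont1} and \eqref{obs1}. The reproducing-kernel conditions carry over because $\cH(S_j^\#)$ is the natural companion space to $\cH(S_j)$, and the roles of $S_1,S_2$ swap exactly as the product order reverses.

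Alternatively, and perhaps more transparently for the reader, one could give a direct self-contained argument mirroring the proof of Theorem~\ref{obspre}: realize $\Sigma_1,\Sigma_2$ as canonical isometric realizations (from \cite[Theorem 2.2.2]{ADRS}), translate the controllability criterion \eqref{pro-cont} of Lemma~\ref{product-conds} into the condition that $S_1^\#(z)h_2(z)=-h_1(z)$ on $\Omega$ forces $h_1\equiv h_2\equiv0$ for $h_1\in\cH(S^\#_1),h_2\in\cH(S^\#_2)$, and then repeat the sufficiency/necessity dichotomy using the containment and isometry results \cite[Theorems 1.5.3, 4.1.1]{ADRS} applied to the $\#$-spaces. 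I expect the adjoint route to be shorter, so I would present that as the main proof and merely remark that the canonical isometric realizations yield the same conclusion directly.
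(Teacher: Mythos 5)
Your proposal is correct and takes essentially the same approach as the paper: the paper gives no separate proof of Theorem~\ref{contpre}, stating only that it follows either by taking adjoint systems in Theorem~\ref{obspre} or by using the canonical isometric realizations of \cite[Theorem 2.2.2]{ADRS}, which are precisely your main route and your alternative. Your careful bookkeeping of the order reversal under $(\Sigma_2\circ\Sigma_1)^*=\Sigma_1^*\circ\Sigma_2^*$ and of the correspondence between controllability of $\Sigma$ and observability of $\Sigma^*$ is exactly the content the paper leaves implicit.
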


In the Hilbert state space settings,  a different  criterion   than in Theorems \ref{obspre} and \ref{contpre} was obtained in \cite{Khanh}.
If $\Sigma_1$ and $\Sigma_2$ are simple conservative, a criterion for $\Sigma= \Sigma_2 \circ \Sigma_1$ to be simple conservative was obtained in the Hilbert state space case in \cite{Br1}
and generalized to the Pontryagin state space case in \cite{Saprikinetal}.

Here is the promised counter-example.
\begin{example} \label{counter}
  Let $a \in H^\infty(\dD)$ such that $\| a\|\leq 1$  and let $b(z)=(z-\alpha)/(1-z \bar{\alpha} )$ where $\alpha \in \dD \setminus \{ 0\}.$ Define
   \begin{equation}\label{funct1}
        S(z):=  \frac{1}{\sqrt{2}}  \begin{pmatrix}
                                      a(z) &  \dfrac{1}{b(z)}
                                    \end{pmatrix}, \quad z \in \dD \setminus \{ \alpha\}.
      \end{equation}
      Then  $ S \in \mathbf{S}_{1}(\dC^2,\dC) $ and it has the  left 
       Kre\u{\i}n--Langer factorization 
   \begin{align} S(z)=b^{-1}(z)S_l(z)&=b^{-1}(z)\begin{pmatrix} \frac{1}{\sqrt {2}}a(z)b(z) &  \frac{1}{\sqrt {2}}\end{pmatrix} .
         \end{align}
                      Consider the canonical co-isometric realizations $\Sigma_{b^{-1}}$ and $\Sigma_{S_l}$ of $b^{-1}$ and ${S_l}$, respectively.  
                    It follows    from Theorem \ref{obspre} that if $\Sigma_{b^{-1}} \circ \Sigma_{S_l}   $ is observable, then $\cH(S)=b^{-1}\cH(  S_l) \oplus \cH(  b^{-1}).$ The argument in \cite[p. 149]{ADRS} shows that this is false, so  $\Sigma_{b^{-1}}\circ \Sigma_{S_l}$ is not observable. By considering the adjoint system one obtains a product of type $\Sigma_{S_r} \circ \Sigma_{B^{-1}_r}$ which is not controllable, while  $\Sigma_{S_r} $ and $ \Sigma_{B^{-1}_r}$ are.
\end{example} The function $S$ in  Example \ref{counter} is taken from \cite[p. 149]{ADRS}.

If the realization $\Sigma$ of $\theta =\theta_r B^{-1}_r=B^{-1}_l\theta_l \in \SK$ has additional properties, it can be represented as the product of the form $\Sigma_{\theta_r} \circ \Sigma_{B^{-1}_r}$ or $\Sigma_{B^{-1}_l} \circ  \Sigma_{\theta_l}. $   The following theorem expands the results of \cite[Theorem 7.2
                           ]{Saprikinetal}. 

                           \begin{theorem} \label{reps1}
                             Let $\theta \in \SK$ and $\theta=\theta_rB_r^{-1}=B_l^{-1}\theta_l$ be its Kre\u{\i}n-Langer factorizations.
                              Let $\Sigma_k, k=1,2,3,$ be the realizations of $\theta$ which are respectively conservative, co-isometric and isometric  such that the negative dimension of the state space in each realization is $\kappa$. Then:
                             \begin{itemize}
                             \item[{\rm(i)}]  The realization $\Sigma_1$ can be represented as the products of the form $$\Sigma_1=\Sigma_{\theta_r} \circ \Sigma_{B_r^{-1}}=\Sigma_{B_l^{-1}} \circ \Sigma_{\theta_l},$$ where $\Sigma_{\theta_r}=(T_{\Sigma_{\theta_r}};\cX^{+}_{r},\cU,\cY;0)$ and $\Sigma_{\theta_l}=(T_{\Sigma_{\theta_l}};\cX^{+}_{l},\cU,\cY;0)$ are conservative realizations of the functions $\theta_r$ and $\theta_l$, respectively,  and $ \Sigma_{B_r^{-1}}=(T_{\Sigma_{B_r^{-1}}};\cX^{-}_{r},\cU,\cU;\kappa)$  and $ \Sigma_{B_l^{-1}}=(T_{\Sigma_{B_l^{-1}}};\cX^{-}_{l},\cY,\cY;\kappa)$ are conservative and minimal realizations of the functions $B_r^{-1}$ and  $B_l^{-1},$ respectively.

                               \item[{\rm(ii)}]  The realization $\Sigma_2$ can be represented as the product of the form $$\Sigma_2=\Sigma_{\theta_r} \circ \Sigma_{B_r^{-1}},$$ where $\Sigma_{\theta_r}=(T_{\Sigma_{\theta_r}};\cX^+,\cU,\cY;0)$
                                is  a co-isometric realization of the function $\theta_r$  and $ \Sigma_{B_r^{-1}}=(T_{\Sigma_{B_r^{-1}}};\cX^-,\cU,\cU;\kappa)$
                                is a conservative minimal realization of  $B_r^{-1}.$

                               \item[{\rm(iii)}] The realization $\Sigma_3$ can be represented as the product of the form
                               $$\Sigma_3=\Sigma_{B_l^{-1}} \circ \Sigma_{\theta_l},$$ where $\Sigma_{\theta_l}=(T_{\Sigma_{\theta_l}};\cX^+,\cU,\cY;0)$
                               is an isometric  realization of the function $\theta_l$  and $ \Sigma_{B_l^{-1}}=(T_{\Sigma_{B_l^{-1}}};\cX^-,\cY,\cY;\kappa)$
                               is a conservative minimal realization of  $B_l^{-1}.$
                             \end{itemize}
                           \end{theorem}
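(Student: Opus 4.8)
The plan is to prove all three parts by one device. Throughout, $\Sigma_1,\Sigma_2,\Sigma_3$ are the simple conservative, observable co-isometric and controllable isometric realizations supplied by Theorem \ref{realz} (for each of these the negative index of the state space equals $\kappa$ automatically, by the converse part of that theorem). For a fixed Kre\u{\i}n--Langer factor pair I will form the product of suitable canonical realizations of the two factors, use Theorem \ref{preserv} to force the product to inherit the required minimality, and then invoke the uniqueness of such realizations up to unitary similarity (\cite[Theorem 2.1.3]{ADRS}) to identify $\Sigma_k$ with the product. The essential input is Theorem \ref{preserv}: in general a product of observable (controllable, simple) systems need not be observable (controllable, simple)---Example \ref{counter} exhibits a product of observable co-isometric systems that fails to be observable---so it is precisely the absence of common zeros between the factors, exploited in Theorem \ref{preserv}, that makes the construction succeed.

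I will carry out part {\rm(ii)} first. By Theorem \ref{realz} take an observable co-isometric realization $\Sigma_{\theta_r}=(T_{\Sigma_{\theta_r}};\cX^+,\cU,\cY;0)$ of $\theta_r \in \So$ and a minimal conservative realization $\Sigma_{B_r^{-1}}=(T_{\Sigma_{B_r^{-1}}};\cX^-,\cU,\cU;\kappa)$ of $B_r^{-1}$; the latter exists because every simple passive realization of an inverse Blaschke product is conservative and minimal by \cite[Theorems 9.4 and 10.2]{Saprikinetal}. The product $\Sigma_{\theta_r}\circ\Sigma_{B_r^{-1}}$ is then co-isometric, realizes $\theta_r B_r^{-1}=\theta$, and has state space of negative index $0+\kappa=\kappa$. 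Since both factors are observable and passive, Theorem \ref{preserv}{\rm(i)} makes the product observable. Thus $\Sigma_{\theta_r}\circ\Sigma_{B_r^{-1}}$ and $\Sigma_2$ are two observable co-isometric realizations of $\theta$ with state-space negative index $\kappa$, hence unitarily similar by \cite[Theorem 2.1.3]{ADRS}; transporting the two factors and the state-space splitting through this unitary exhibits $\Sigma_2$ itself as the product of the transported factors, which proves {\rm(ii)}.

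Part {\rm(iii)} is the exact dual: take a controllable isometric realization $\Sigma_{\theta_l}$ of $\theta_l$ and a minimal conservative realization $\Sigma_{B_l^{-1}}$ of $B_l^{-1}$, form $\Sigma_{B_l^{-1}}\circ\Sigma_{\theta_l}$ (isometric, of index $\kappa$, with transfer function $B_l^{-1}\theta_l=\theta$), invoke Theorem \ref{preserv}{\rm(ii)} to obtain controllability, and finish with the uniqueness of controllable isometric realizations. For part {\rm(i)} choose simple conservative realizations of $\theta_r$ and $\theta_l$ together with minimal (hence simple) conservative realizations of $B_r^{-1}$ and $B_l^{-1}$; both products $\Sigma_{\theta_r}\circ\Sigma_{B_r^{-1}}$ and $\Sigma_{B_l^{-1}}\circ\Sigma_{\theta_l}$ are then conservative, and Theorem \ref{preserv}{\rm(iii)} makes each of them simple. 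Since simple conservative realizations of $\theta$ with index $\kappa$ are unique up to unitary similarity, $\Sigma_1$ is unitarily similar to each product and therefore admits both representations at once.

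The analytic work is already absorbed into Theorem \ref{preserv}; what remains is bookkeeping, and the point where care is needed is the choice of factor realizations. One must realize each Blaschke factor $B_r^{-1},B_l^{-1}$ conservatively and minimally, so that it is simultaneously observable, controllable and simple and thus meets the hypotheses of whichever part of Theorem \ref{preserv} is invoked; and one must check that the negative indices add to exactly $\kappa$, since this is what licenses \cite[Theorem 2.1.3]{ADRS}. I expect the main obstacle to be organizational rather than analytic: ensuring that the passage from unitary similarity to a genuine product representation keeps the transported factors of their declared types ($\cX^+$ a Hilbert, $\cX^-$ a $\kappa$-dimensional anti-Hilbert state space) and realizing the correct Kre\u{\i}n--Langer factors, which is routine because unitary similarity preserves the type, the state-space signature and the transfer function of each factor.
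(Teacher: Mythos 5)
Your argument is essentially sound for the \emph{particular} realizations supplied by Theorem \ref{realz}, and within that special case it is a legitimate alternative to the paper's route: for part {\rm(i)} the paper invokes \cite[Theorem 4.4]{DLS1} to decompose a simple conservative system directly and then identifies the factors' transfer functions as the Kre\u{\i}n--Langer factors, whereas you build the product from chosen factor realizations, use Theorem \ref{preserv} to secure simplicity (observability, controllability), and finish with uniqueness up to unitary similarity; transporting the product structure through the unitary is indeed routine. But there is a genuine gap: the theorem is \emph{not} restricted to the simple conservative, observable co-isometric and controllable isometric realizations. It asserts the product representation for an \emph{arbitrary} conservative ($\Sigma_1$), co-isometric ($\Sigma_2$), resp.\ isometric ($\Sigma_3$) realization whose state space has negative index $\kappa$. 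Your parenthetical remark that the index condition holds ``automatically'' reveals the misreading: for the realizations of Theorem \ref{realz} it is automatic, but for a general conservative realization the negative index of the state space can exceed the index of the transfer function, so the hypothesis $\mathrm{ind}_-\cX=\kappa$ is a substantive assumption, and it is exactly what the general case needs.

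Concretely, your proof covers only Step 1 of the paper's two-step argument and omits Step 2 entirely. In the general case one cannot appeal to uniqueness up to unitary similarity, because a non-simple conservative realization of $\theta$ is not unitarily similar to the simple one. The paper instead uses Lemma \ref{simp-kar} (which is where the hypothesis $\mathrm{ind}_-\cX=\kappa$ enters) to conclude that $(\cX^s)^\perp$ (resp.\ $(\cX^o)^\perp$, $(\cX^c)^\perp$) is a Hilbert subspace, block-decomposes the system operator accordingly as in \eqref{projection1} and \eqref{projection2}, applies the Step 1 result to the restriction $\res_{\cX\to\cX^s}\Sigma_1$ (resp.\ $\res_{\cX\to\cX^o}\Sigma_2$, $\res_{\cX\to\cX^c}\Sigma_3$), and then checks that dilating back by the Hilbert-space complement can be absorbed into the positive factor, so that $\Sigma_1=(\dil\,\Sigma_{\theta_r})\circ\Sigma_{B_r^{-1}}$, etc. Without this restriction--dilation step your proposal proves a strictly weaker statement.
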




                            \begin{proof} The theorem will be proved in two steps. In the first step, it is assumed that $\Sigma_1$ is simple,   $\Sigma_2$ is observable and  $\Sigma_3$ is controllable. In the second step, the general case will be proved by using the results from the first step.

\emph{{Step 1.}}
 \rm{(i)} This is stated essentially in  \cite[Theorem 7.2]{Saprikinetal} but without proof.
  According to \cite[Theorem 4.4]{DLS1},
    $  \Sigma_1=(T_{\Sigma};\cX,\cU,\cY;\kappa)   $ can be represented as the products of the form $$\Sigma_1= \Sigma_{r2}\circ \Sigma_{r1}=\Sigma_{l2}\circ \Sigma_{l1}$$
    such that
   \begin{equation}\label{products} \begin{split}  \Sigma_{r1}&=(T_{\Sigma_{r1}},\cX^{-}_{r},\cU,\cU,\kappa), \qquad \Sigma_{r2}=(T_{\Sigma_{r2}},\cX^{+}_{r},\cU,\cY,0), \\
     \Sigma_{l1}&=(T_{ \Sigma_{l1}},\cX^{+}_{l},\cU,\cY,0), \qquad
    \,\, \Sigma_{l2}=(T_{ \Sigma_{l2}},\cX^{-}_{l},\cY,\cY,\kappa),\end{split}\end{equation} where $\cX^{-}_{r}$ and  $\cX^{-}_{l}$ are $\kappa$-dimensional anti-Hilbert spaces. Subscripts refer "right" and "left", because it will be proved that the factorizations $$\theta =\theta_{\Sigma_{r2}}\theta_{\Sigma_{r1}}=\theta_{\Sigma_{l2}}\theta_{\Sigma_{l1}}$$ of the transfer function $\theta$ of $\Sigma_1$ corresponding to the product representations above are actually Kre\u{\i}n-Langer factorizations.
 Since    all  the realizations in \eqref{products} are simple and conservative, it follows from Lemma \ref{simp-kar} that
  $\theta_{\Sigma_{r2}},\theta_{\Sigma_{l1}}\in \So,
      \theta_{\Sigma_{r1}} \mathbf{S}_{\kappa}(\cU), \theta_{\Sigma_{l2}} \in \mathbf{S}_{\kappa}(\cY),$  and the spaces
       \begin{equation}\label{zerospaces} \cX^{-}_{r} \ominus{\cX^{-}_{r}}^c, \qquad \cX^{-}_{r} \ominus{\cX^{-}_{r}}^o,\qquad \cX^{-}_{l} \ominus{\cX^{-}_{l}}^c,\qquad
           \cX^{-}_{r} \ominus{\cX^{-}_{l}}^o,\end{equation} are Hilbert spaces. But since the state spaces $\cX^{-}_{r}$ and  $\cX^{-}_{l}$ are 
           anti-Hilbert spaces, all the spaces in \eqref{zerospaces} must be the   zero spaces.
            Thus $\Sigma_{r1}$ and $\Sigma_{l2}$ are minimal.
            By using the unitary similarity introduced on page \pageref{unsims} it can be deduced now that
           all co-isometric observable   realizations of $\theta_{r2}$
               and $ \theta_{l1} $ are conservative and minimal,
                and then it follows from \cite[Theorem A3]{ADRS}
             that  $\theta_{r2}$       and $ \theta_{l1} $
                are inverse Blaschke products, which gives the result.

\rm{(ii)} It is known (cf. e.g.  \cite[Theorem 2.4.1]{ADRS})  that  the co-isometric and observable realization
$\Sigma_2=(A,B,C,D;\cX,\cU,\cY;\kappa)$ of the function $\theta$ has a
 simple and conservative dilation
 $\widehat{\Sigma}_{2}=(\widehat{A},\widehat{B},\widehat{C},D;\widehat{\cX},\cU,\cY;\kappa)$ such that
 \begin{equation}\label{condil1}
                                   T_{\widehat{\Sigma}_{2}} 
                                                                            =\begin{pmatrix}
                                                            \begin{pmatrix}
                                                              A_{11} & A_{12} \\
                                                              0 & A
                                                            \end{pmatrix} & \begin{pmatrix}
                                                                              B_1 \\
                                                                              B
                                                                            \end{pmatrix}  \\
                                                            \begin{pmatrix}
                                                              0 & C
                                                            \end{pmatrix} & D  \\

                                                          \end{pmatrix} :  \begin{pmatrix}
                                                          \begin{pmatrix}
                                                                            \cX_0  \\
                                                                            \cX
                                                                          \end{pmatrix} \\ \cU \end{pmatrix} \?\to\?  \begin{pmatrix}
                                                          \begin{pmatrix}
                                                                            \cX_0  \\
                                                                            \cX
                                                                          \end{pmatrix} \\ \cY \end{pmatrix},
                                 \end{equation}
 where $\cX_0$ is a Hilbert space.
  By \cite[Theorem 7.7]{Saprikinetal}, there exist
   unique fundamental decompositions $\cX=\cX^+ \oplus \cX^-$ and
                                 $\widehat{\cX}=\widehat{\cX}^+\oplus\widehat{ \cX}^{-}$
   such that $A\cX^+\subset\cX^+$ and $\widehat{A} \widehat{\cX}^+\subset \widehat{\cX}^+.$
   Then $(\cX_0  \oplus\cX^+ ) \oplus \cX^- $ is a fundamental decomposition
   of $\widehat{\cX},$ and for $x_0 \in \cX_0  $ and $x_+ \in \cX^+ $
                                 \begin{equation}\label{condil2}
                                   \widehat{A}(x_0 \oplus x_+) =  \begin{pmatrix}
                                                              A_{11} & A_{12} \\
                                                              0 & A
                                                            \end{pmatrix}\begin{pmatrix}
                                                                              x_0 \\
                                                                              x_+
                                                                            \end{pmatrix} =\begin{pmatrix}
                                                                              A_{11}x_0 + A_{12}x_+  \\
                                                                              Ax_+
                                                                            \end{pmatrix} \in \begin{pmatrix}
                                                                              \cX_0 \\
                                                                              \cX^+
                                                                            \end{pmatrix}.
                                 \end{equation}
  This yields $\widehat{\cX}^+ = \cX_0 \oplus
  \cX^+$ and $\widehat{\cX}^{-}={\cX}_{2}^{-}$.
    Part {\rm(i)} shows that  $\widehat{\Sigma}_2$ can be represented as
     $\widehat{\Sigma}_2= \widehat{\Sigma}_{\theta_r} \circ \widehat{\Sigma}_{B_r^{-1}}. $ The transfer functions of the components are $\theta_r$ and  $B_r^{-1}$, respectively, and $\widehat{\Sigma}_{\theta_r} $ is simple and conservative and $\widehat{\Sigma}_{B_r^{-1}}$ is conservative  and  minimal.
    It follows from \cite[Theorem 7.7]{Saprikinetal}  that the  state spaces of  $\widehat{\Sigma}_{\theta_r}$
           and $\widehat{\Sigma}_{B_r^{-1}}$ are $\widehat{\cX}^+$ and $\cX^-$, respectively. Thus
     $$ \widehat{\Sigma}_{B_r^{-1}}=(A_1,B_1,C_1,D_1;\cX^-,\cU,\cU;\kappa), \qquad
      \widehat{\Sigma}_{\theta_r}=(A_2,B_2,C_2,D_2;\widehat{\cX}^+,\cU,\cY;0).$$
         Now the representation $ \widehat{\Sigma}_{\theta_r} \circ \widehat{\Sigma}_{B_r^{-1}}$,  equation \eqref{product2} and the representation \eqref{condil1} yield
\begin{align*}\label{dilations1}
                &T_{\widehat{\Sigma}_2}= \begin{pmatrix}
                               I_{ \cX^-} & 0 & 0 \\
                               0 & A_2 & B_2\\
                                   0 & C_2  & D_2
                           \end{pmatrix} \begin{pmatrix}
                               A_1 & 0 & B_1 \\
                                0& I_{\widehat{ \cX}^+} &0\\
                                   C_1 & 0  & D_1
                           \end{pmatrix}
                           \!:\! \begin{pmatrix}
                \cX^-    \\   \widehat{ \cX}^+ \\
                    \cU
                  \end{pmatrix}\!\to\!  \begin{pmatrix}
                  { \cX}^-   \\  \widehat{ \cX}^+\\
                    \cY
                  \end{pmatrix}                                  \qekv  \\
              &T_{\widehat{\Sigma}_2}=    \begin{pmatrix}
                                      I_{\cX^-} & 0 & 0 & 0 \\
                                    0&    P_{\cX_0} A_2\uphar_{\cX_0}   & P_{\cX_0} A_2\uphar_{\cX^+}&    P_{\cX_0}B_2 \\
                                    0  &     P_{\cX^+}  A_2  \uphar_{\cX_0}   & P_{\cX^+}  A_2 \uphar_{\cX^+} & P_{\cX^+}B_2\\
                                      0  &    0   & C_2 & D_2
                                    \end{pmatrix}  \begin{pmatrix}
                                                     A_1 & 0 & 0& B_1  \\
                                                     0 & I_{\cX^0} & 0 & 0 \\
                                                     0 & 0 & I_{\cX^+} & 0 \\
                                                     C_1 & 0 & 0 & D_1
                                                   \end{pmatrix}
                                      \!:\! \begin{pmatrix}  { \cX}_2^-  \\
               { \cX}_0     \\{ \cX}^+ \\
                    \cU
                  \end{pmatrix} \!\to\! \begin{pmatrix}  { \cX}_2^-  \\
               { \cX}_0     \\{ \cX}^+ \\
                    \cY
                  \end{pmatrix}.
\end{align*}
By using the representation above and \eqref{dilation}--\eqref{dil-res}, it follows  that $$\res_{\widehat{\cX} \to {\cX}}\widehat{\Sigma}_2 =\res_{\widehat{\cX} \to {\cX}} ( \widehat{\Sigma}_{\theta_r} \circ \widehat{\Sigma}_{B_r^{-1}} ) =
                                                                               \left( \res_{\widehat{\cX}^{+} \to \cX^{+}}\widehat{\Sigma}_{\theta_r} \right) \circ    
                                                                               \widehat{\Sigma}_
                                                                               {B_r^{-1}}
                                                                               =\Sigma_2.$$ Define $\widehat{\Sigma}_
 {B_r^{-1}}:={\Sigma}_
 {B_r^{-1}}$ and  $ \res_{\widehat{\cX}^{+} \to \cX^{+}}\widehat{\Sigma}_{\theta_r}:={\Sigma}_{\theta_r}.$ 
 Since $\Sigma_2$ is co-isometric and observable and
  ${\Sigma}_
 {B_r^{-1}}$ is minimal and conservative, ${\Sigma}_{\theta_r} $ must be co-isometric and observable. That is,  $\Sigma_2={\Sigma}_{\theta_r}  \circ    
 {\Sigma}_
    {B_r^{-1}}$ is the desired representation.

     {\rm (iii) } This can be done by using \cite[Theorem 2.4.3]{ADRS} and then  proceeding along the lines of the proof of  {\rm (ii) }.

  \emph{Step 2.}
                           {\rm(i)} Denote $\Sigma_1=(A,B,C,D;\cX;\cU,\cY;\kappa).$ Since the index of the transfer function $\theta$ coincides with the negative index of $\cX,$ Lemma \ref{simp-kar} shows that $ (\cX^s)^\perp$ is a Hilbert space. It easily follows from \eqref{simp1} that $C (\cX^s)^\perp =\{0\},B^{*} (\cX^s)^\perp =\{0\}, A\cX^s\subset \cX^s $ and $A(\cX^s)^\perp \subset(\cX^s)^\perp. $ This implies that  the system operator has the representation
 \begin{equation}\label{projection1}
   T_{\Sigma_1} =\begin{pmatrix}
                 \begin{pmatrix}
                   A_1 & 0 \\
                   0 & A_0
                 \end{pmatrix} & \begin{pmatrix}
                                   0 \\
                                   B_0
                                 \end{pmatrix}\\
                                  \begin{pmatrix}
                                   0 &
                                   C_0
                                 \end{pmatrix} & D
                \end{pmatrix}: \begin{pmatrix}
                    \begin{pmatrix}(\cX^s)^\perp \\ { \cX}^s \end{pmatrix}\\
                    \cU
                  \end{pmatrix}  \to \begin{pmatrix}
                  \begin{pmatrix}(\cX^s)^\perp \\ { \cX}^s \end{pmatrix}\\
                    \cY
                  \end{pmatrix}.
 \end{equation} Easy calculations show that the restriction $$\res_{\cX\to \cX^s } \Sigma_1 =(A_0,B_0,C_0,D; \cX^s,\cU,\cY;\kappa):=\Sigma_0$$ is conservative and simple. Step 1 (i)   shows that
 $\Sigma_0 = \Sigma_{\theta_r} \circ \Sigma_{B_r^{-1}}=\Sigma_{B_l^{-1}}\circ \Sigma_{\theta_l}, $ where
   \begin{align*}
   \Sigma_{\theta_r}    &=(T_{\Sigma_{\theta_r}};\cX_r^{s+},\cU,\cY;0), \qquad  &\Sigma_{B_r^{-1}} &=(T_{\Sigma_{B_r^{-1}}};\cX_r^{s-},\cU,\cU;\kappa),
   \\ \Sigma_{\theta_l} &=(T_{\Sigma_{\theta_l}};\cX_l^{s+},\cU,\cY;0),  \qquad&\Sigma_{B_l^{-1}}  &=(T_{\Sigma_{B_l^{-1}}};\cX_l^{s-},\cY,\cY;\kappa). \end{align*}
    The spaces $\cX_r^{s-}$ and $\cX_l^{s-}$ are $\kappa$-dimensional anti-Hilbert spaces, $\Sigma_{\theta_r}$    and $\Sigma_{\theta_l}$ are conservative and  simple and $\Sigma_{B_r^{-1}}$ and $\Sigma_{B_l^{-1}}$ are conservative and minimal.
  It can be now deduced that $\cX$ has the fundamental decompositions $( (\cX^s)^\perp\oplus\cX_r^{s+}   ) \oplus \cX_r^{s-}$
 and $( (\cX^s)^\perp \oplus \cX_l^{s+})  \oplus \cX_l^{s-}.$  Moreover,
 \begin{equation*}
 A((\cX_r^s)^\perp \oplus \cX_r^{s+}) \subset (\cX_r^s)^\perp \oplus \cX_r^{s+}, \quad A \cX_l^{s-} \subset\cX_l^{s-}.
 \end{equation*} Similar calculations as in the proof of Step 1 \rm{(ii)} show that
 \begin{equation*}
  \dil \;
   \Sigma_0 =\left(\dil \;
   \Sigma_{\theta_r}\right) \circ \Sigma_{B_r^{-1}} =  \Sigma_{B_l^{-1}} \circ  \left(\dil \;
    \Sigma_{\theta_l} \right) =\Sigma_1.  \end{equation*} Since $\Sigma_1,$ $\Sigma_{B_l^{-1}}$ and $\Sigma_{B_r^{-1}}$  are conservative, $\dil \;
   \Sigma_{\theta_r} $ and $\dil \;
    \Sigma_{\theta_l}$ must be conservative. Moreover, the state spaces $(\cX^s)^\perp\oplus\cX_r^{s+}   $ and $  (\cX^s)^\perp \oplus \cX_l^{s+}$ of $\dil \;
   \Sigma_{\theta_r} $ and $\dil \;
    \Sigma_{\theta_l},$ respectively, are Hilbert spaces. That is, $\Sigma_1=\left(\dil \;
   \Sigma_{\theta_r}\right) \circ \Sigma_{B_r^{-1}} $ and $\Sigma_1= \Sigma_{B_l^{-1}} \circ  \left(\dil \;
    \Sigma_{\theta_l} \right) $  are the desired representations.

    {\rm(ii)} Denote $\Sigma_2=(A,B,C,D;\cX,\cU,\cY;\kappa).$ Lemma \ref{simp-kar} show  that $(\cX^o)^\perp$ is a Hilbert space. From the identity \eqref{obs1} it follows easily that $
      A(\cX^o)^\perp \subset (\cX^o)^\perp $ and $C(\cX^o)^\perp =\{0\}.$ This implies that  the system operator  can be represented as
    \begin{equation}\label{projection2}
   T_{\Sigma_2} =\begin{pmatrix}
                   A_1 & A_2 & B_1 \\
                  0 & A_0 & B_0 \\
                  0 & C_0 & D
                \end{pmatrix}: \begin{pmatrix}
                    (\cX_2^o)^\perp \\ { \cX}_2^o\\
                    \cU
                  \end{pmatrix}  \to \begin{pmatrix}
                    (\cX_2^o)^\perp \\ { \cX}_2^o\\
                    \cY
                  \end{pmatrix}.
 \end{equation}  Moreover,  the restriction $$\res_{\cX_2 \to \cX_2^o } \Sigma_2 =(A_0,B_0,C_0,D; \cX_1^o,\cU,\cY;\kappa):=\Sigma_0$$ is co-isometric and observable. Step 1 {\rm(ii)} shows that $\Sigma_0$ has the representation $\Sigma_0 = \Sigma_{\theta_r} \circ \Sigma_{B_r^{-1}}$ such that the components $$ \Sigma_{\theta_r} =(T_{\Sigma_{\theta_r}},\cX^{o+},\cU,\cY,0)   , \quad \Sigma_{B_r^{-1}} =(T_{\Sigma_{B_r^{-1}}},\cX^{o-},\cU,\cU,\kappa)$$  have the properties introduced in Part 1 {\rm(ii)}. The final statement is obtained by proceeding as in the proof of {\rm(i)}.

  {\rm(iii)} The proof is similar to the proofs of {\rm(i)} and  {\rm(ii)} and hence the details are omitted.  \end{proof}

\begin{proposition} \label{invariants}
  Suppose that $A \in \cL(\cU,\cY)$ is the main operator of  a passive system $\Sigma=(T_\Sigma;\cX,\cU,\cY;\kappa)$ such that the index of the transfer function of $\Sigma$ is $\kappa.$ Then there exist unique fundamental decompositions
 $\cX= \cX_1^+ \oplus \cX_1^-= \cX_2^{+} \oplus \cX_2^{-}$ such that $A\cX_1^+ \subset \cX_1^+$ and $A\cX_2^{-}\subset \cX_2^{-},$ respectively.
\end{proposition}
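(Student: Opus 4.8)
The plan is to reduce both assertions to a single statement about the main operator: a bi-contraction $A$ on a Pontryagin space of negative index $\kappa$ whose transfer function has index $\kappa$ possesses a unique $\kappa$-dimensional uniformly negative $A$-invariant subspace. Granting this, I would obtain the second decomposition by taking $\cX_2^-$ to be that subspace and $\cX_2^+:=(\cX_2^-)^\perp$, which is then a Hilbert subspace because $\cX_2^-$ already carries all $\kappa$ negative squares of $\cX$. For the first decomposition I would apply the same statement to the dual system $\Sigma^*=(T_\Sigma^*;\cX,\cY,\cU;\kappa)$, whose main operator is $A^*$, which is again passive, and whose transfer function $\theta_{\Sigma}^{\#}$ again has index $\kappa$; this produces a unique $\kappa$-dimensional uniformly negative $A^*$-invariant subspace $\cX_1^-$, and setting $\cX_1^+:=(\cX_1^-)^\perp$ together with the equivalence $A^*\cX_1^-\subseteq\cX_1^- \Leftrightarrow A\cX_1^+\subseteq\cX_1^+$ yields the first decomposition. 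Uniqueness of both decompositions will follow from the uniqueness in the single statement.

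The candidate for the invariant subspace is the Riesz spectral subspace $\cX_{\mathrm{ext}}$ of $A$ associated with $\sigma(A)\cap\{|z|>1\}$. Since $A$ is a contraction on $\Pi_\kappa$, this exterior spectrum consists of finitely many eigenvalues of finite total algebraic multiplicity, so $\cX_{\mathrm{ext}}$ is finite-dimensional and $A$-invariant, and by the theory of contractions in Pontryagin spaces it is uniformly negative; in particular $\dim\cX_{\mathrm{ext}}\le\kappa$. To see that the dimension is exactly $\kappa$ I would invoke the index hypothesis: since $\theta_\Sigma\in\SK$ has precisely $\kappa$ poles in $\dD$ counted with multiplicity, and every pole $z_0$ of $\theta_\Sigma(z)=D+zC(I-zA)^{-1}B$ forces $1/z_0\in\sigma(A)$ with $|1/z_0|>1$, the total pole multiplicity $\kappa$ cannot exceed the total algebraic multiplicity of the exterior eigenvalues, whence $\dim\cX_{\mathrm{ext}}\ge\kappa$. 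Thus $\dim\cX_{\mathrm{ext}}=\kappa$, and $\cX_2^-:=\cX_{\mathrm{ext}}$ is a $\kappa$-dimensional uniformly negative $A$-invariant subspace, as required.

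For uniqueness, suppose $\cM$ is any $\kappa$-dimensional uniformly negative $A$-invariant subspace. Then $\cM$ is regular, $\cM^\perp$ is a Hilbert subspace, and $A\cM\subseteq\cM$ gives, with respect to $\cX=\cM\oplus\cM^\perp$, a block upper-triangular form for $A$ whose diagonal block $P_{\cM^\perp}A\uphar_{\cM^\perp}$ on $\cM^\perp$ is a Hilbert-space contraction and hence has spectrum in $\overline{\dD}$. Consequently $\sigma(A)\cap\{|z|>1\}\subseteq\sigma(A\uphar_{\cM})$, so the Riesz projection for the exterior spectrum maps into $\cM$, giving $\cX_{\mathrm{ext}}\subseteq\cM$; comparing dimensions yields $\cM=\cX_{\mathrm{ext}}$, which proves uniqueness. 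The hard part I expect is the claim that $\cX_{\mathrm{ext}}$ is uniformly negative (equivalently negative definite, so that the exterior eigenvalues are semisimple with negative root vectors): this is precisely where the indefinite geometry is decisive, and it should be drawn from the theory of $\pi$-contractions in \cite{Azizov}. Alternatively, this point can be bypassed by passing to a conservative dilation $\widehat\Sigma$, for which Theorem \ref{reps1} already supplies a $\kappa$-dimensional anti-Hilbert $\widehat A$-invariant subspace with exterior spectrum; since the dilating Hilbert spaces contribute only spectrum in $\overline{\dD}$, the Riesz projection localizes this subspace inside $\cX$, its image under $P_\cX$ is the desired $\cX_2^-$, and uniform negativity is inherited from the dilation.
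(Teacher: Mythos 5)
Your route is genuinely different from the paper's (the paper applies Theorem \ref{reps1}(i) to a conservative \emph{embedding} of $\Sigma$, which keeps $\cX$ and $A$ unchanged, to produce both decompositions, and proves uniqueness via generalized eigenvectors), and your uniqueness argument via the Riesz projection is correct and arguably cleaner than the paper's. However, the existence half has a genuine gap at exactly the point you defer to general theory: the claim that $\cX_{\mathrm{ext}}$ is uniformly negative is \emph{not} a general fact about contractions on Pontryagin spaces. Take $\dC^2$ with $\langle(a,b),(c,d)\rangle=a\bar d+b\bar c$ (negative index $1$) and $Ae_1=\lambda e_1$, $Ae_2=\bar\lambda^{-1}e_2$ with $|\lambda|>1$; this $A$ is unitary, and its exterior spectral subspace is the neutral line spanned by $e_1$ --- one-dimensional but degenerate, hence not admissible as $\cX_2^-$ in a fundamental decomposition. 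General $\pi_\kappa$-contraction theory only gives that $\cX_{\mathrm{ext}}$ is nonpositive of dimension at most $\kappa$; non-degeneracy must come from the hypothesis $\theta_\Sigma\in\SK$ (in the example, any passive system with this main operator has transfer function of index $0$), and you invoke that hypothesis only for the dimension count, which does not suffice. The missing step can be repaired in the spirit of the paper's own argument for unimodular eigenvalues: a neutral root vector $x$ for an exterior eigenvalue satisfies $\widetilde{C}A^nx=0$ for a conservative embedding $\widetilde{\Sigma}$, hence $x\in(\widetilde{\cX}^o)^\perp$, which is a Hilbert subspace by Lemma \ref{simp-kar}, forcing $x=0$. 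That argument is not in your proposal.

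Your fallback does not close the gap either: a conservative \emph{dilation} $\widehat{\Sigma}$ replaces the state space by $\cD\oplus\cX\oplus\cD_*$ and the main operator by $\widehat{A}$, whose block $A_{23}:\cD_*\to\cX$ is nonzero in general, so $P_{\cX}$ does not intertwine $\widehat{A}$ with $A$, the exterior Riesz subspace of $\widehat{A}$ need not lie inside $\cX$, and the $P_{\cX}$-image of an $\widehat{A}$-invariant subspace need not be $A$-invariant. This is precisely why the paper uses an embedding (enlarging $\cU$ and $\cY$ only) rather than a dilation. Two smaller points: the inequality ``total pole multiplicity of $\theta_\Sigma$ $\le$ total algebraic multiplicity of the exterior eigenvalues of $A$'' is correct but used without justification, and the assertion that $P_{\cM^\perp}A\uphar_{\cM^\perp}$ is a Hilbert-space contraction should be routed through $A^*\uphar_{\cM^\perp}$ (the restriction of the bi-contraction $A^*$ to the invariant Hilbert subspace $\cM^\perp$), as the paper does.
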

\begin{proof}
  Embed the system $\Sigma$ in a conservative system $\widetilde{\Sigma}=(T_{\widetilde{\Sigma}},\cX,\widetilde{\cU},\widetilde{\cY},\kappa)$ without changing the  main operator and  the state space. Now the first identity in \eqref{transemb} shows that the transfer function $\theta_{\widetilde{\Sigma}}$ of $\widetilde{\Sigma}$ has the same amount of poles (counting multiplicities) as  the transfer function of the original system. Hence it follows from 
  Lemma \ref{kar-gen} that the index of
   $\theta_{\widetilde{\Sigma}}$ is $\kappa.$ The representations in  Theorem \ref{reps1} {\rm(i)} combined with the decomposition of the main operator $A$ in \eqref{transemb} give  the  claimed fundamental decompositions. 
    The decomposition $\cX_1^+ \oplus \cX_1^- $ corresponds to the one induced by the product representation $\widetilde{\Sigma}=\Sigma_{\widetilde{\theta}_r} \circ \Sigma_{\widetilde{B}_r^{-1}},$ where $\widetilde{\theta} =\widetilde{\theta}_r \widetilde{B}_r^{-1}  $ is the right Kre\u{\i}n-Langer factorization of $\widetilde{\theta}.$ Similarly,  the decomposition $\cX_2^+ \oplus \cX_2^- $ corresponds to the one induced by the product representation $\widetilde{\Sigma}= \Sigma_{\widetilde{B}_l^{-1}}    \circ   \Sigma_{\widetilde{\theta}_l},$ where $\widetilde{\theta} =\widetilde{B}_l^{-1}\widetilde{\theta}_l   $ is the left Kre\u{\i}n-Langer factorization of $\widetilde{\theta}.$

   To prove the uniqueness, the fact that $A$ has  no negative eigenvector with corresponding eigenvalue modulus one is needed. To this end, assume that $Ax=\lambda x$ for some $x \in \cX$ and $\lambda \in \dT.$ Consider again a conservative embedding $\widetilde{\Sigma}$ of $\Sigma,$ and represent $\widetilde{\Sigma}$ as in \eqref{embed} . Then,
   $$\begin{pmatrix}
       A&\widetilde{B} \\
       \widetilde{C} & \widetilde{D}
     \end{pmatrix} \begin{pmatrix}
       x \\
       0
     \end{pmatrix} =\begin{pmatrix}
       \lambda x \\
       \widetilde{C}x
     \end{pmatrix}.  $$ Since $\widetilde{\Sigma}$ is conservative, the system operator $T_{\widetilde{\Sigma}}$ of $\widetilde{\Sigma}$ is unitary. Therefore  $\langle x, x\rangle_{\cX}=  \langle \lambda x, \lambda x\rangle_{\cX} +  \langle \widetilde{C} x, \widetilde{C} x\rangle_{\widetilde{\cY}},$ and since $\widetilde{\cY}$ is a Hilbert space and $|\lambda|=1,$ it must be $\widetilde{C}x=0.$ Then, $\widetilde{C}A^nx=\lambda^n\widetilde{C}x=0$ for any $n \in \dN_0.$ That is, $x\in ( \widetilde{\cX}^o)^\perp,$ where $ \widetilde{\cX}^o$ is the observable subspace of the system $\widetilde{\Sigma}.$ Since the index of $\widetilde{\theta}$ is $\kappa, $ the subspace $ ( \widetilde{\cX}^o)^\perp$ is a Hilbert space by Lemma \ref{simp-kar}, and $x$ must be non-negative.

     Suppose now that $\cX^+ \oplus \cX^-$ is some other fundamental decomposition of $\cX$ such that $A \cX^- \subset  \cX^-.$ It will be shown that  $\cX^- \subset \cX_2^-,$ since then $\cX^- = \cX_2^-$ because these subspaces have the same finite dimension, and thus   $\cX^+ \oplus \cX^-$ is equal to $\cX_2^+ \oplus \cX_2^-.$ It suffices to show that $\cX_2^-$ contains all  generalized eigenvectors of $A\uphar_{\cX^-}.$ Let $x$ be a non-zero vector in  $\cX^-$ such that $(A-\lambda I )^nx=0$ for some  $\lambda \in \dC$ and $ n \in \dN.$ Since $\cX_2^- $ is an anti-Hilbert space and $A\uphar_{\cX^-}$ is a contraction, $|\lambda| \geq1.$   The fact proved above gives now $|\lambda| > 1.$  Represent the vector $x$ in the form $x=x^+ +x^-,$ where $  x^{\pm} \in \cX_2^{\pm}.$ Since $A\cX_2^-\subset \cX_2^-,$ the operator $A$ has a block representation
     $$A =\begin{pmatrix}
            A_{11} & 0 \\
            A_{12} & A_{22}
          \end{pmatrix} : \begin{pmatrix}
                            \cX_2^+ \\
                            \cX_2^-
                          \end{pmatrix} \to \begin{pmatrix}
                            \cX_2^+ \\
                            \cX_2^-
                          \end{pmatrix} .       $$
                          Since $A^*$ is also a contraction, $A_{11}^*$ is a Hilbert space contraction, and therefore $A_{11}$ must be a contraction.
                          Now
                          $$(A-\lambda I )^nx =\begin{pmatrix}
           (A_{11} -\lambda I_{\cX_2^+} )^n & 0 \\
           f(n) & ( A_{22}-\lambda I_{\cX_2^-})^n
          \end{pmatrix}\begin{pmatrix}
                            x^+ \\
                            x^-
                          \end{pmatrix} =\begin{pmatrix}
                          0 \\
                            0
                          \end{pmatrix}, $$ where $f(n)$ is an operator depending on $n.$ This implies $  (A_{11} -\lambda  I_{\cX_2^+} )^n x^+=0,$ but since $A_{11}$ is a Hilbert space contraction and $|\lambda| >1,$ it must be $x^+=0.$ Hence $x=x^-\in \cX_2^-,$ an the uniqueness of the decomposition $\cX= \cX_2^+ \oplus  \cX_2^-$  is proved. The uniqueness of   the decomposition $\cX= \cX_1^+ \oplus  \cX_1^-$ can be proved by using the fact $A^* \cX_1^- \subset  \cX_1^-, $ and then proceeding as above.
   \end{proof}
Proposition \ref{invariants}  is a generalization of \cite[Theorem 7.7]{Saprikinetal} in a sense that the condition that the system is simple can be  relaxed. As proved, it suffices that the orthocomplement $(\cX^s)^\perp$ of the simple subspace is a Hilbert space, see Lemma \ref{simp-kar}.
The proof of Proposition \ref{invariants} follows the lines of the proof of  \cite[Theorem 7.7]{Saprikinetal}.

 The results of  Theorem \ref{reps1} (i) cannot be extended to isometric or co-isometric systems as the next example shows.
  \begin{example}\label{norep}
    Let $S$ be as in Example \ref{counter} and let $\Sigma$ be any co-isometric observable realization of $S.$
    Suppose  that   $\Sigma=\Sigma'_{b^{-1}} \circ \Sigma'_{S_l}$ for some  co-isometric observable realizations of $b^{-1}$ and $S_l.$ 
    Then the realizations $\Sigma'_{b^{-1}}$ and $\Sigma'_{S_l}$ are unitarily similar, respectively, with the  canonical co-isometric observable realizations $\Sigma_{b^{-1}}$ of $b^{-1}$ and $ \Sigma_{S_l}$ of $S_l$. 
     An easy calculation shows that $\Sigma'_{b^{-1}} \circ \Sigma'_{S_l}$ is unitarily similar with
    $\Sigma_{b^{-1}}\circ \Sigma_{S_l},$ which is a contradiction since  $\Sigma_{b^{-1}}\circ \Sigma_{S_l}$ is not observable by Example \ref{counter}. Thus $\Sigma$ cannot be represented as a product of the form $\Sigma'_{b^{-1}} \circ \Sigma'_{S_l}.$
   \end{example}

\section{Stable systems and zero defect functions}\label{sec-defect}
A contraction $A \in \cL(\cX),$ where $\cX$ is a Hilbert space, belongs to the classes $C_{0\,\cdot}$ or $C_{\cdot\,0}$   if, respectively, $\lim_{n \to \infty} A^nx=0$ or $ \lim_{n \to \infty} {A^*}^nx=0 $  for every $x \in \cX.$ The class $C_{00}$ is defined to be  $ C_{0\,\cdot}  \cap C_{\cdot\,0}.$  A system with a Hilbert state space   is said to be \textbf{strongly stable} (\textbf{strongly co-stable}, \textbf{strongly bi-stable}) if the main operator of the system belongs to  $C_{0\,\cdot}$  ($C_{\cdot\,0}$ , $C_{00} $). When the state space $\cX$ is a Pontryagin space, stability cannot be defined verbatim, because for any contractive $A \in \cL(\cX), $ the equality  $\lim_{n \to \infty} A^nx=0 $ does not hold for any negative vector $x.$ The stability property can therefore hold only in certain Hilbert subspaces. The following definition of stability generalizes and expands \cite[Definition 9.1]{Saprikinetal}.

\begin{definition}
  Let $\Sigma=(T_{\Sigma};\cX,\cU,\cY;\kappa)$ be a passive system with the main operator $A$  such that $\theta_{\Sigma}\in \SK.$ Let $\cX= \cX_1^+ \oplus \cX_1^-= \cX_2^{+} \oplus \cX_2^{-}$ be the unique fundamental decompositions of $\cX$ introduced in Proposition \ref{invariants} such that $A\cX_1^+ \subset \cX_1^+$ and $A\cX_2^{-}\subset \cX_2^{-}.$ Then:
  \begin{itemize}
    \item[{\rm (i)}] $\Sigma$ belongs to class $\mathbf{P}^{\kappa}_{0\,\cdot}$ if $A\uphar_{\cX_1^+} \in  C_{0\,\cdot}$;
    \item[{\rm (ii)}] $\Sigma$ belongs to class $\mathbf{P}^{\kappa}_{\cdot\,0}$ if $A^*\uphar_{ \cX_2^{+}} \in  C_{0\,\cdot}$;
    \item[{\rm (iii)}] $\Sigma$ belongs to class $\mathbf{P}^{\kappa}_{00}$ if $A\uphar_{\cX_1^+} \in  C_{00}$;
    \item[{\rm (iv)}]    $\Sigma$ belongs to class $\mathbf{C}^{\kappa}_{0\,\cdot}$ if $\Sigma$ is  simple conservative and $\Sigma \in \mathbf{P}^{\kappa}_{0\,\cdot}$;
    \item[{\rm (v)}]   $ \Sigma$ belongs to class $\mathbf{C}^{\kappa}_{\cdot\,0}$ if $\Sigma$ is  simple conservative and $\Sigma \in \mathbf{P}^{\kappa}_{\cdot\,0}$;
    \item[{\rm (vi)}]  $\Sigma$ belongs to class $\mathbf{C}^{\kappa}_{00}$ if $\Sigma$ is  simple conservative and $\Sigma \in \mathbf{P}^{\kappa}_{00}$;
       \item[{\rm (vi)}]  $\Sigma$ belongs to class $\mathbf{I}^{\kappa}_{0\,\cdot}$ if $\Sigma$ is  controllable isometric and $\Sigma \in \mathbf{P}^{\kappa}_{0\,\cdot}$;
    \item[{\rm (vii)}]    $\Sigma$ belongs to class ${\mathbf{I}^{*}}^{\kappa}_{\cdot\,0}$ if $\Sigma$ is  observable co-isometric and $\Sigma \in \mathbf{P}^{\kappa}_{\cdot\,0}$;
  \end{itemize}
\end{definition} The classes $\mathbf{P}^{\kappa}_{00}$ and $\mathbf{C}^{\kappa}_{00}$ are defined in \cite[Definition 9.1]{Saprikinetal}, as well as the class $\mathbf{P}^{\kappa}_{00}$ with the additional condition that $\Sigma$ must be simple. It will be shown later that the realizations in the classes  $\mathbf{C}^{\kappa}_{00}$,  $\mathbf{I}^{\kappa}_{0\,\cdot}$ and ${\mathbf{I}^{*}}^{\kappa}_{\cdot\,0}$ are minimal, the realizations in $\mathbf{C}^{\kappa}_{0\,\cdot}$ are observable and the realizations in $\mathbf{C}^{\kappa}_{\cdot\,0}$ are controllable.
\begin{theorem}\label{consinners}
  A simple conservative system $\Sigma=(A,B,C,D;\cX,\cU,\cY;\kappa)$ belongs to
  \begin{itemize}
    \item[{\rm(i)}]  $\mathbf{C}^{\kappa}_{0\,\cdot}$ if and only if $\theta_{\Sigma}$ has isometric boundary values a.e.;
    \item[{\rm(ii)}]  $\mathbf{C}^{\kappa}_{\cdot\,0}$ if and only if $\theta_{\Sigma}$ has co-isometric boundary values a.e.;
    \item[{\rm(iii)}]   $\mathbf{C}^{\kappa}_{00}$ if and only if $\theta_{\Sigma}$ has unitary boundary values a.e.
  \end{itemize}
\end{theorem}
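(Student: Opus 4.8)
The plan is to reduce each of the three equivalences to the corresponding Hilbert-space statement for the right (resp. left) Kre\u{\i}n--Langer factor, and then to translate the stability of that factor's main operator into a boundary-value property of $\theta_\Sigma$. First I would use the converse part of Theorem \ref{realz} to note that $\theta_\Sigma\in\SK$ with $\kappa$ equal to the negative index of $\cX$, so Theorem \ref{reps1} {\rm(i)} applies: since $\Sigma$ is simple conservative it has the product representations $\Sigma=\Sigma_{\theta_r}\circ\Sigma_{B_r^{-1}}=\Sigma_{B_l^{-1}}\circ\Sigma_{\theta_l}$, where $\Sigma_{\theta_r},\Sigma_{\theta_l}$ are simple conservative realizations of the ordinary Schur factors $\theta_r,\theta_l$ on Hilbert state spaces $\cX_1^+,\cX_2^+$, while $\Sigma_{B_r^{-1}},\Sigma_{B_l^{-1}}$ are conservative and minimal. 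By \eqref{prod-oper} (and the identity $A_2=A\uphar_{\cX_2}$ noted after \eqref{product2}) these representations induce precisely the fundamental decompositions $\cX=\cX_1^+\oplus\cX_1^-=\cX_2^+\oplus\cX_2^-$ of Proposition \ref{invariants}: one reads off $A\uphar_{\cX_1^+}=A_{\theta_r}$, the main operator of $\Sigma_{\theta_r}$, and, since $A\cX_2^-\subset\cX_2^-$ with $\cX_2^+$ a Hilbert subspace, $A^*\uphar_{\cX_2^+}=(A_{\theta_l})^*$. Hence the definitions unwind to: $\Sigma\in\mathbf{C}^{\kappa}_{0\,\cdot}$ iff $A_{\theta_r}\in C_{0\,\cdot}$; $\Sigma\in\mathbf{C}^{\kappa}_{\cdot\,0}$ iff $A_{\theta_l}\in C_{\cdot\,0}$; and $\Sigma\in\mathbf{C}^{\kappa}_{00}$ iff $A_{\theta_r}\in C_{00}$.

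The core is then the Hilbert-space ($\kappa=0$) lemma: for $T\in\mathbf{S}_0$ with simple conservative realization $(A_T,B_T,C_T,D_T)$ one has $A_T\in C_{0\,\cdot}$ iff $T$ is inner, $A_T\in C_{\cdot\,0}$ iff $T$ is co-inner, and $A_T\in C_{00}$ iff $T$ is bi-inner. I would prove the first equivalence as follows. Since the colligation is isometric, $\|A_T^n x\|^2=\|x\|^2-\sum_{k=0}^{n-1}\|C_TA_T^k x\|^2$ for all $x$, so $A_T\in C_{0\,\cdot}$ is equivalent to the observability map $x\mapsto C_T(I-zA_T)^{-1}x$ being isometric from $\cX$ into $H^2(\cY)$. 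For a simple conservative realization this map identifies $\cX$ with the de Branges--Rovnyak space $\cH(T)$ in its intrinsic norm, and the inclusion $\cH(T)\hookrightarrow H^2(\cY)$ is isometric exactly when $T$ is inner; this is the classical de Branges--Rovnyak / Sz.-Nagy--Foias characterization, and since all simple conservative realizations are unitarily similar the property is well defined. The co-inner case follows by applying the same argument to the dual realization (equivalently $A_T^*\in C_{0\,\cdot}$ iff $T^\#$ is inner iff $T$ is co-inner), and the bi-inner case is the intersection $C_{00}=C_{0\,\cdot}\cap C_{\cdot\,0}$.

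Finally I would transfer the conclusion from the factor to $\theta_\Sigma$: by the remark following Theorem \ref{krein-langer-fact}, the functions $\theta_\Sigma,\theta_r,\theta_l$ have simultaneously isometric (co-isometric, unitary) boundary values a.e.\ on $\dT$, because $B_r^{-1}$ and $B_l^{-1}$ are unitary everywhere on $\dT$. Combining with the two previous paragraphs gives {\rm(i)}: $\Sigma\in\mathbf{C}^{\kappa}_{0\,\cdot}$ iff $A_{\theta_r}\in C_{0\,\cdot}$ iff $\theta_r$ is inner iff $\theta_\Sigma$ has isometric boundary values a.e.; the arguments for {\rm(ii)} (via $A_{\theta_l}\in C_{\cdot\,0}$) and {\rm(iii)} (via $A_{\theta_r}\in C_{00}$) are identical.

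The main obstacle is the Hilbert-space lemma with the correct orientation: one must verify that it is $A_{\theta_r}$ itself, and not its adjoint, that governs inner-ness. This amounts to the fact that the transfer function of a simple conservative system equals $\Theta_{A}^{\#}$, the reflection of the characteristic function of its main operator $A$; getting this reflection right is exactly what pins down $C_{0\,\cdot}\leftrightarrow$ inner (rather than $C_{0\,\cdot}\leftrightarrow$ co-inner). A secondary technical point to check carefully is the identification $A^*\uphar_{\cX_2^+}=(A_{\theta_l})^*$ with respect to the indefinite inner product, which uses that $\cX_2^+$ is a regular Hilbert subspace orthogonal to the $A$-invariant regular subspace $\cX_2^-$.
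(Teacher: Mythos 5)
Your proposal is correct and follows essentially the same route as the paper: reduce to the Hilbert-space case via the product representations $\Sigma=\Sigma_{\theta_r}\circ\Sigma_{B_r^{-1}}=\Sigma_{B_l^{-1}}\circ\Sigma_{\theta_l}$ of Theorem \ref{reps1}{\rm(i)}, identify $A\uphar_{\cX_1^+}$ and $A^*\uphar_{\cX_2^+}$ with the main operators of $\Sigma_{\theta_r}$ and $\Sigma_{\theta_l}^*$, and use that $B_r^{-1},B_l^{-1}$ are unitary on $\dT$ so that $\theta_\Sigma$ and $\theta_r,\theta_l$ have simultaneously isometric (co-isometric, unitary) boundary values. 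The only difference is that you supply a proof of the $\kappa=0$ base case, which the paper simply cites as classical (Sz.-Nagy--Foias); your sketch of that lemma, including the orientation check via the observability map into $H^2(\cY)$, is sound.
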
  In the Hilbert state space case, i.e. $\kappa=0,$ the result is known and goes back essentially to \cite{SF}.     For $\kappa>0,$ part {\rm(iii)} is first proved in \cite[Theorem 9.2]{Saprikinetal}.
\begin{proof}
    Since the results hold for $\kappa=0, $ it suffices to prove the them in case $\kappa>0.$  Consider the representations $\Sigma = \Sigma_{\theta_r} \circ \Sigma_{B^{-1}_r} =  \Sigma_{B^{-1}_l}\circ \Sigma_{\theta_l}  $  as in Theorem \ref{reps1}. Now the results  follow by observing that the main operator of $\Sigma_{\theta_r}$ is $A\uphar_{\cX_1^+}$ and the main operator of  $\Sigma_{\theta_l}^*$ is $A^*\uphar_{ \cX_2^{+}},$ and then using the case $\kappa=0.$
\end{proof}

In Section \ref{sec-pont}, the notions of defect functions were introduced.
If the right or the left defect function  of $\theta \in \SK$ is identically equal to zero,  the realizations of $\theta$ have some strong structural properties.
\begin{lemma} \label{conservs}
   For a simple conservative system $\Sigma=(A,B,C,D;\cX,\cU,\cY;\kappa)$ with the transfer function $\theta \in \SK$, the following statements hold:
   \begin{itemize}
    \item[{\rm(i)}]  $\Sigma$ is controllable if and only if $\psi_{\theta}\equiv0$;
     \item[{\rm(ii)}]  $\Sigma$ is observable if and only if $\varphi_{\theta}\equiv0$;
    \item[{\rm(iii)}]  $\Sigma$ is minimal if and only if $\psi_{\theta}\equiv0$ and $\varphi_{\theta}\equiv0$.
   \end{itemize}
\end{lemma}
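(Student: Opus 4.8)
The plan is to reduce all three equivalences to the already understood Hilbert state space case $\kappa=0$ by means of the product representations of Theorem \ref{reps1}. Since $\Sigma$ is simple conservative with transfer function $\theta\in\SK$, its state space has negative index $\kappa$, so Theorem \ref{reps1}(i) yields
$\Sigma=\Sigma_{\theta_r}\circ\Sigma_{B_r^{-1}}=\Sigma_{B_l^{-1}}\circ\Sigma_{\theta_l}$,
where $\Sigma_{\theta_r},\Sigma_{\theta_l}$ are simple conservative realizations of the ordinary Schur functions $\theta_r,\theta_l\in\So$ and $\Sigma_{B_r^{-1}},\Sigma_{B_l^{-1}}$ are conservative minimal realizations of the inverse Blaschke products. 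For $\kappa=0$ the relevant facts are exactly those of \cite{AHS2}: a simple conservative realization of an ordinary Schur function $S$ is observable (controllable) precisely when $\varphi_S\equiv0$ ($\psi_S\equiv0$). Statement (iii) is then immediate from (i) and (ii), since a system is minimal iff it is both controllable and observable, so I treat only (i) and (ii).

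The two ``only if'' directions are direct. If $\Sigma$ is controllable, then so is the factor $\Sigma_{\theta_r}$ of the right product, because a product can be controllable only if both factors are (c.f. \cite[Theorem 1.2.1]{ADRS}); the case $\kappa=0$ gives $\psi_{\theta_r}\equiv0$, and by \eqref{rightdefect} $\psi_\theta=\psi_{\theta_r}\equiv0$. Dually, if $\Sigma$ is observable then the factor $\Sigma_{\theta_l}$ of the left product is observable, whence $\varphi_{\theta_l}\equiv0$ and $\varphi_\theta=\varphi_{\theta_l}\equiv0$ by \eqref{rightdefect}.

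For the converse implications one is forced to use the \emph{opposite} product, where a preservation result is available: controllability is preserved by left products (Theorem \ref{preserv}(ii)) and observability by right products (Theorem \ref{preserv}(i)). This produces a bookkeeping mismatch, which is the real content of the proof: controllability of $\Sigma$ is governed by $\psi_\theta=\psi_{\theta_r}$, but the usable preservation statement lands on $\theta_l$, and symmetrically for observability. I therefore need the invariance of the \emph{vanishing} of the defect functions under passage between the two Kre\u{\i}n--Langer factors, namely $\psi_{\theta_r}\equiv0\Rightarrow\psi_{\theta_l}\equiv0$ and $\varphi_{\theta_l}\equiv0\Rightarrow\varphi_{\theta_r}\equiv0$. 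Granting this, (i) closes as follows: $\psi_\theta\equiv0$ gives $\psi_{\theta_r}\equiv0$, hence $\psi_{\theta_l}\equiv0$, hence $\Sigma_{\theta_l}$ is controllable by the case $\kappa=0$, hence $\Sigma=\Sigma_{B_l^{-1}}\circ\Sigma_{\theta_l}$ is controllable by Theorem \ref{preserv}(ii) (here $\Sigma_{B_l^{-1}}$ is minimal conservative). Symmetrically $\varphi_\theta\equiv0$ gives $\varphi_{\theta_r}\equiv0$, so $\Sigma_{\theta_r}$ is observable and $\Sigma=\Sigma_{\theta_r}\circ\Sigma_{B_r^{-1}}$ is observable by Theorem \ref{preserv}(i), proving (ii).

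The invariance is where I expect the main difficulty to lie. Using \eqref{leftde}--\eqref{rightde} and the unitarity of the Blaschke products on $\dT$, a short computation gives $N_\theta^2=B_rN_{\theta_r}^2B_r^*$ and $M_{\theta_l}^2=B_lM_{\theta_r}^2B_l^*$ a.e. on $\dT$. Consider $\varphi_{\theta_l}\equiv0\Rightarrow\varphi_{\theta_r}\equiv0$ by contraposition: if $g\not\equiv0$ is an ordinary Schur function with $g^*g\le N_{\theta_r}^2$, set $F:=gB_r^{-1}$, so that $F^*F=B_r\,g^*g\,B_r^*\le N_\theta^2\le I_\cU$ a.e.; as $F$ is meromorphic in $\dD$ with at most $\kappa$ poles and boundary values of norm at most one, Lemma \ref{kar-gen} shows $F\in\mathbf{S}_{\kappa'}$ for some $\kappa'\le\kappa$. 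Its left Kre\u{\i}n--Langer factor $F_l$ is a nonzero ordinary Schur function (since $F\not\equiv0$) with $F_l^*F_l=F^*F\le N_\theta^2$, so passing to its outer part and using maximality of $\varphi_\theta=\varphi_{\theta_l}$ gives $\varphi_{\theta_l}\not\equiv0$. The implication $\psi_{\theta_r}\equiv0\Rightarrow\psi_{\theta_l}\equiv0$ follows by the same device applied to $M^2$ and right factors (or by passing to the adjoint systems and the functions ${}^{\#}$). The delicate step is precisely the recognition, via Lemma \ref{kar-gen}, of $gB_r^{-1}$ as a generalized Schur function, which is what allows an ordinary Schur minorant of $N_\theta^2$ to be manufactured; once this is in hand the whole argument closes.
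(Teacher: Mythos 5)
Your argument is correct and follows the same skeleton as the paper's proof: reduce to the case $\kappa=0$ (where the statement is \cite[Corollary 6.4]{AHS2}) via the product representations $\Sigma=\Sigma_{\theta_r}\circ\Sigma_{B_r^{-1}}=\Sigma_{B_l^{-1}}\circ\Sigma_{\theta_l}$ of Theorem \ref{reps1}, and close the converse directions with the preservation results of Theorem \ref{preserv}. The genuine difference lies in the ``invariance'' step, and there your version is more complete than the paper's. The paper's proof of (i) asserts that $\psi_{\theta}\equiv0$ together with ``the identity \eqref{rightdefect}'' yields $\psi_{\theta_l}\equiv0$; but \eqref{rightdefect} literally states $\psi_{\theta}=\psi_{\theta_r}$ and $\varphi_{\theta}=\varphi_{\theta_l}$, i.e.\ it pairs each defect function with the \emph{opposite} Kre\u{\i}n--Langer factor from the one to which the applicable part of Theorem \ref{preserv} refers. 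You correctly diagnose this mismatch and prove the missing transfer $\psi_{\theta_r}\equiv0\Leftrightarrow\psi_{\theta_l}\equiv0$ (and its $\varphi$-analogue): the boundary identities $N_{\theta}^2=B_rN_{\theta_r}^2B_r^*$ and $M_{\theta_l}^2=B_lM_{\theta_r}^2B_l^*$ check out, multiplying a nonzero Schur minorant by the inverse Blaschke product produces a generalized Schur function by Lemma \ref{kar-gen}, and its Kre\u{\i}n--Langer factor is then a nonzero ordinary Schur minorant of the other kernel, which contradicts the vanishing of the other defect function through the maximality clause of Theorem \ref{majorant}. This is precisely the fact that the paper's one-line appeal to \eqref{rightdefect} silently presupposes, so there is no gap on your side; if anything, you have repaired a small one in the source.
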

\begin{proof}
   For the case $\kappa=0,$ see \cite[Corollary 6.4]{AHS2} or  \cite[Theorem 1]{ArNu2}.  For $\kappa >0,$ consider the representations $\Sigma = \Sigma_{\theta_r} \circ \Sigma_{B^{-1}_r} =  \Sigma_{B^{-1}_l}\circ \Sigma_{\theta_l}  $  as in Theorem \ref{reps1}. If $\Sigma$ is controllable, then so is $ \Sigma_{\theta_l}$ and from case $\kappa=0$ it follows that  $\psi_{\theta_l}\equiv0.$ Now the identity \eqref{rightdefect} implies that $\psi_{\theta}\equiv 0.$
   Conversely, if $\psi_{\theta}\equiv 0,$ the identity \eqref{rightdefect} shows that also $\psi_{\theta_l}\equiv0,$ and  from the case $\kappa=0$ it follows that $\Sigma_{\theta_l}$ is controllable. By Theorem \ref{reps1} (i), $ \Sigma_{B^{-1}_l}$ is minimal. Then it follows from Theorem \ref{preserv} that $\Sigma =  \Sigma_{B^{-1}_l}\circ \Sigma_{\theta_l}  $ is controllable, and part {\rm(i)} is proven. Proof of  part {\rm(ii)} is similar, and  part {\rm(iii)} follows by combining {\rm(i)} and  {\rm(ii)}.
\end{proof}

 The following theorem in the Hilbert state space case was obtained in
\cite[Theorem 1.1]{AHS2}. The proof therein was based on the block parametrization of the system operator. The proof given here for the general case  is based on the existence of minimal passive realizations. It also uses some techniques appearing in the proof of \cite[Theorem 1]{Arov} and, in addition,   implements  the product representations provided in Theorem \ref{reps1}.
\begin{theorem} \label{sepontulos}
  Let $\Sigma=(A,B,C,D,\cX,\cU,\cY,\kappa)$ be a passive system with the transfer function  $\theta.$  
   Then:
  \begin{itemize}
    \item[{\rm (i)}] If $\Sigma$ is controllable and $\varphi_{\theta}\equiv0,$ then $\Sigma$ is isometric and minimal.
     Moreover, if $\theta$ has isometric boundary values a.e., then  $\Sigma \in \mathbf{I}^{\kappa}_{0\,\cdot}.$
    \item[{\rm (ii)}] If $\Sigma$ is observable and $\psi_{\theta}\equiv0,$ then $\Sigma$ is co-isometric and minimal.
     Moreover, if $\theta$ has co-isometric boundary values a.e., then  $\Sigma \in {\mathbf{I}^*}^{\kappa}_{\cdot\,0}.$
    \item[{\rm (iii)}] If $\Sigma$ is simple and $\varphi_{\theta}\equiv0$ and $\psi_{\theta}\equiv0,$ then $\Sigma$ is conservative and minimal.
     Moreover, if $\theta$ has unitary boundary values a.e., then  $\Sigma \in  \mathbf{C}^{\kappa}_{00}.$
  \end{itemize}
\end{theorem}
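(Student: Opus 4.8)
The plan is to reduce every case to the Hilbert state space case $\kappa=0$, which is \cite[Theorem 1.1]{AHS2}, by splitting off the inverse Blaschke factor through the product representations of Theorem \ref{reps1}. First observe that {\rm(i)} and {\rm(ii)} are dual: passing from $\Sigma$ to $\Sigma^*$ turns controllability into observability and, since $\theta_{\Sigma^*}=\theta^\#$ and $N_{\theta^\#}(\zeta)=M_\theta(\bar\zeta)$, interchanges $\varphi_\theta$ and $\psi_\theta$; so it suffices to treat {\rm(i)} and read off {\rm(ii)}, while {\rm(iii)} will combine the two mechanisms. In each case controllability, observability or simplicity makes $(\cX^c)^\perp$, $(\cX^o)^\perp$ or $(\cX^s)^\perp$ trivial, hence Hilbert, so Lemma \ref{simp-kar} gives $\theta\in\SK$ with $\kappa=\mathrm{ind}_-\cX$ and both Proposition \ref{invariants} and Theorem \ref{reps1} become applicable.

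For {\rm(i)}, write the left Kre\u{\i}n--Langer factorization $\theta=B_l^{-1}\theta_l$; by \eqref{rightdefect} the hypothesis $\varphi_\theta\equiv0$ is the same as $\varphi_{\theta_l}\equiv0$. Realize $\theta_l$ by an isometric controllable system $\Sigma_{\theta_l}$ on a Hilbert state space---which the $\kappa=0$ theorem makes isometric and minimal---and $B_l^{-1}$ by its conservative minimal (finite-dimensional anti-Hilbert) realization $\Sigma_{B_l^{-1}}$. The product $\Sigma':=\Sigma_{B_l^{-1}}\circ\Sigma_{\theta_l}$ is then isometric, realizes $\theta$, carries negative index $\kappa$, and is controllable by Theorem \ref{preserv}(ii); granting its observability (the main point, below) it is an isometric minimal realization of $\theta$. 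To handle the given $\Sigma$ I would adapt Arov's technique: embed it, as in \eqref{embed}, in a conservative system $\widetilde\Sigma$ with the same state space and main operator; a direct computation using that $T_{\widetilde\Sigma}$ is isometric gives $\|T_\Sigma(x,u)\|^2=\|x\|^2+\|u\|^2-\|C_1x+D_{21}u\|^2$, so $\Sigma$ is isometric exactly when $C_1=0$ and $D_{21}=0$, equivalently $\theta_{21}\equiv0$ once controllability is used. Since $\theta_{21}^*\theta_{21}\le N_\theta^2=I-\theta^*\theta$ on $\dT$, Theorem \ref{majorant} together with controllability identifies the essential part of $\theta_{21}$ with the defect function $\varphi_\theta$, so $\varphi_\theta\equiv0$ forces $\theta_{21}\equiv0$ and hence $\Sigma$ isometric. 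Being isometric and controllable, $\Sigma$ is unitarily similar to $\Sigma'$ by \cite[Theorem 2.1.3]{ADRS}, and therefore minimal.

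The main obstacle is the observability of the left product $\Sigma'=\Sigma_{B_l^{-1}}\circ\Sigma_{\theta_l}$. Example \ref{counter} shows that left products of precisely this shape generically destroy observability, so no general preservation theorem applies; the argument must exploit that $\Sigma_{\theta_l}$ is \emph{minimal}, not merely controllable, which is exactly what $\varphi_\theta\equiv0$ secures through the $\kappa=0$ result. Concretely I would feed this into the criterion \eqref{pro-ops} of Lemma \ref{product-conds}: a nontrivial solution would give $B_l^{-1}(z)h_1(z)=-h_2(z)$ with $h_1$ produced by $\Sigma_{\theta_l}$ and $h_2$ by $\Sigma_{B_l^{-1}}$; as $h_1$ is holomorphic on $\dD$ while the poles of $B_l^{-1}$ sit exactly at the zeros of $B_l$, the Kre\u{\i}n--Langer no-common-zero property of Theorem \ref{krein-langer-fact}, combined with the minimality of both factors, forces $h_1\equiv0$ and then $h_2\equiv0$, in the spirit of the reproducing-kernel computation of Theorem \ref{obspre}. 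The same minimality is what, in the embedding step above, removes a possible inner residue from $\theta_{21}$ and legitimizes identifying its essential part with $\varphi_\theta$.

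Part {\rm(ii)} now follows by duality. For {\rm(iii)}, simplicity again matches the indices, and one realizes $\theta$ by a product whose Hilbert-space factor is a simple passive realization of $\theta_l$ (resp.\ $\theta_r$); by the $\kappa=0$ case of \cite[Theorem 1.1]{AHS2} simplicity together with $\varphi_\theta\equiv0$ and $\psi_\theta\equiv0$ makes that factor conservative, and since the Blaschke factor is conservative the product---hence $\Sigma$---is conservative; being simple conservative with both defect functions zero, $\Sigma$ is minimal by Lemma \ref{conservs}(iii). Finally, for the ``Moreover'' clauses I would use that $\theta$ and its Schur factors $\theta_l,\theta_r$ have simultaneously isometric, co-isometric or unitary boundary values (as recorded after Lemma \ref{kar-gen}); by the classical $\kappa=0$ theory, which is Theorem \ref{consinners} at $\kappa=0$, the minimal realization of such an inner factor has main operator in $C_{0\,\cdot}$ ($C_{\cdot\,0}$, $C_{00}$), and since in the product representation that operator is exactly $A\uphar_{\cX_1^+}$ (respectively $A^*\uphar_{\cX_2^{+}}$), the membership in $\mathbf{I}^{\kappa}_{0\,\cdot}$, ${\mathbf{I}^{*}}^{\kappa}_{\cdot\,0}$ and $\mathbf{C}^{\kappa}_{00}$ follows.
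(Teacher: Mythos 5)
Your central mechanism for isometricity is exactly the paper's: pass to the Julia embedding of $\Sigma$, use the a.e.\ boundary inequality $\theta_{21}^*(\zeta)\theta_{21}(\zeta)\leq I-\theta^*(\zeta)\theta(\zeta)$ together with the maximality property in Theorem \ref{majorant} to get $\theta_{21}\equiv0$ from $\varphi_\theta\equiv0$, and then use controllability via \eqref{cont2} to conclude $D_T=0$. That part is sound. The gap is in how you get \emph{minimality}. You route it through the observability of the reference product $\Sigma'=\Sigma_{B_l^{-1}}\circ\Sigma_{\theta_l}$, and the argument you sketch for that does not work: in the left orientation the observability equation of Lemma \ref{product-conds} reads $B_l^{-1}(z)h_1(z)=-h_2(z)$ with $h_1$ holomorphic on $\dD$ and $h_2$ carrying the poles of $B_l^{-1}$, so \emph{both} sides are permitted to have matching singular parts at the zeros of $B_l$ and no contradiction with holomorphy arises. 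The no-common-zeros property of Theorem \ref{krein-langer-fact} only bites in the right orientation of Theorem \ref{preserv}(i), where the poles occur on one side of the equation only; Example \ref{counter} is precisely a left product of observable factors that fails to be observable. The statement you want is true, but only a posteriori: one knows the left product with a \emph{minimal} isometric $\Sigma_{\theta_l}$ is minimal because a minimal isometric realization of $\theta$ exists and Theorem \ref{reps1}(iii) decomposes it — and the existence of that minimal isometric realization is exactly what is being proved, so your route is circular. The paper's fix is short and you should adopt it: take a minimal passive realization of $\theta$ (Theorem \ref{realz}(iv)); it is controllable with $\varphi_\theta\equiv0$, so the same embedding argument makes it isometric, hence there exists a controllable isometric realization that is observable; all controllable isometric realizations of $\theta$ with state space of negative index $\kappa$ are unitarily similar, so the given $\Sigma$ is observable as well.

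Part {\rm(iii)} has the same structural problem in a sharper form. The given $\Sigma$ is only simple and \emph{passive}, so Theorem \ref{reps1} gives no product decomposition of $\Sigma$ itself, and simple passive realizations of the same function are not unitarily similar; consequently, showing that some \emph{other} product realization of $\theta$ is conservative says nothing about $\Sigma$. Your phrase ``the product --- hence $\Sigma$ --- is conservative'' is exactly the unjustified step. Here too one must work with $\Sigma$ directly through its Julia embedding: $\varphi_\theta\equiv0$ and $\psi_\theta\equiv0$ kill both $\theta_{21}$ and $\theta_{12}$, and then simplicity (via \eqref{simp2}) forces both defect operators $D_T$ and $D_{T^*}$ to vanish, as in the paper's appeal to the argument of \cite[Theorem 9.4]{Saprikinetal}. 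Your treatment of the ``moreover'' clauses — transferring stability from the Schur factors through the product representations and Theorem \ref{consinners} — is legitimate, because at that stage $\Sigma$ is already known to be isometric (co-isometric, conservative) and Theorem \ref{reps1} genuinely applies.
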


\begin{proof} (i)
  Denote the system operator of $\Sigma$ by  $T,$ and consider the Julia embedding $\widetilde{\Sigma}$ of the system $\Sigma.$ This means that the corresponding system operator is  a unitary operator of the form
   \begin{equation} T_{\widetilde{\Sigma}} = \begin{pmatrix}
    A & \begin{pmatrix} B & D_{T_{,1}^*} \end{pmatrix} \\
                               \begin{pmatrix}  C \\   D_{T_{,1}}^*  \end{pmatrix} &  \begin{pmatrix}
                                      D &  D_{T_{,2}^*} \\
                                       D_{T_{,2}}^* & -L^*
                                    \end{pmatrix}
                             \end{pmatrix}  : \begin{pmatrix}
                                            \cX \\
                                          \begin{pmatrix}  \cU \\
                                            \sD_{T^*}\end{pmatrix}
                                          \end{pmatrix} \to \begin{pmatrix}
                                            \cX \\\begin{pmatrix}
                                            \cY \\
                                            \sD_{T}\end{pmatrix}
                                          \end{pmatrix},
  \end{equation} where  $$ D_{T^*} = \begin{pmatrix}
                                      D_{T_{,1}^*}\\
                                      D_{T_{,2}^*}
                                    \end{pmatrix},\quad D_{T} = \begin{pmatrix}
                                      D_{T_{,1}}\\
                                      D_{T_{,2}}
                                    \end{pmatrix},\quad   D_{T^*}D_{T^*}^*=I_{\cX}-TT^*,\quad   D_{T}D_{T}^*=I_{\cX}-T^*T,  $$  such that $D_{T}$ and $D_{T^*}$ have zero kernels. The transfer function of the embedded system is
  \begin{equation*}\label{embtrans} {\theta_{\widetilde{\Sigma}}}(z)=
   \begin{pmatrix}
                                                       D + zC(I-zA)^{-1}B & D_{T_{,2}^*}+zC(I-zA)^{-1} D_{T^*_{,1}} \\
                                                          D_{T_{,2}}^* +zD_{T_{,1}}^*(I-zA)^{-1}B &-L^* + zD_{T_{,1}}^*(I-zA)^{-1} D_{T^*_{,1}}
                                                     \end{pmatrix}= \begin{pmatrix}
                                                                                           \theta (z) & \theta_{12}(z)\\
                                                                                            \theta_{21}(z) &  \theta_{22}(z)
                                                                                         \end{pmatrix}.
  \end{equation*} Notice that $\theta,\theta_{12}, \theta_{21}$ and $\theta_{22}$ all are generalized Schur functions. Because
$
    I-{\theta}_{\widetilde{\Sigma}}(\zeta){\theta}_{\widetilde{\Sigma}}^*(\zeta)\geq0$ and $I-{\theta}^*_{\widetilde{\Sigma}}(\zeta){\theta}_{\widetilde{\Sigma}}(\zeta)\geq0
  $ a.e. on $\zeta \in \dT,$ one concludes that
  \begin{align}
    I-\theta^*(\zeta)\theta(\zeta) &\geq \theta_{21}^*(\zeta)\theta_{21}(\zeta) \label{boundary1}; \\
     I-\theta(\zeta)\theta^*(\zeta) &\geq  \theta_{12}(\zeta)\theta_{12}^*(\zeta) \label{boundary2}.
  \end{align}
Since  $\varphi_{\theta}\equiv0,$ it follows from the identity \eqref{boundary1} and Theorem \ref{majorant}  that $\theta_{21} \equiv 0.$      Then   $D_{T_{,2}}^*=0$ and  $D_{T_{,1}}^*(I-zA)^{-1}B =0$ for every $z$ in some neighbourhood of the origin. 
 Since $\Sigma$ is controllable, it follows from \eqref{cont2}   that $D_{T_{,1}}^*=0$ and then $D_T=0,$ which means that    $\Sigma$ is isometric.

 If $\Sigma$ is chosen to be minimal passive, the previous argument shows that $\Sigma$ is an  isometric and minimal realization of $\theta.$
Since the controllable isometric realizations of $\theta$ are unitarily similar, they all are now also minimal. This proves the first statement in (i).

If  $\theta$ has isometric boundary values a.e., then $\theta_l$ in the  left Kre\u{\i}n-Langer factorization of $\theta$ is inner. Consider  the product $\Sigma=\Sigma_{B^{-1}}\circ  \Sigma_{\theta_l} $ as in the Theorem \ref{reps1}. Let  $\cX_1^{+} \oplus \cX_1^{-}$ and $\cX_2^{+} \oplus \cX_2^{-}$ be the unique fundamental decompositions of $\cX$ of, given by Proposition \ref{invariants}, such that $A\cX_1^{+} \subset \cX_1^{+} $ and  $A^*\cX_2^{+} \subset \cX_2^{+}. $   The case $\kappa=0$ from \cite[Theorem 1.1]{AHS2} shows that the main operator of $\Sigma_{\theta_l}$ belongs to $C_{0\,\cdot},$ and then the   main operator of $\Sigma_{\theta_l}^*$, which is $A^*\uphar_{\cX_2^{+}},$ belongs to $C_{\cdot\,0}.$  It suffices to show that this is equivalent to $A\uphar_{\cX_1^{+}} \in C_{0\,\cdot}. $ Consider   a simple conservative embedding $\widetilde{\Sigma}$ of $\Sigma.$  Represent $\widetilde{\Sigma}$ as in the products 
$\widetilde{\Sigma}=  \Sigma_{\theta_r'} \circ \Sigma_{B^{-1'}_r}=\Sigma_{B_l^{-1'}}\circ  \Sigma_{\theta_l'},$ see Theorem \ref{reps1}. In views of \eqref{product2},  the main operator $A^*\uphar_{\cX_2^{+}}$ of $ \Sigma_{\theta_l'}^*$ belongs to $C_{\cdot\,0},$ and therefore the main operator of $ \Sigma_{\theta_l'}$ belongs to $C_{0\,\cdot},$ see \eqref{product1}.  It follows from Theorem \ref{consinners} that $\theta_l'$ is inner. Then so is $\theta_r',$ and again from the Theorem \ref{consinners} it follows that the main operator $ A\uphar_{\cX_1^{+}}  $ of the system $\Sigma_{\theta_r'}$ is in $C_{0\,\cdot}.$ Then $\Sigma \in  \mathbf{I}^{\kappa}_{0\,\cdot}$, and the second statement in {\rm(i)} is proved.

(ii)
If  $\psi_{\theta}\equiv0,$  the identity \eqref{boundary2} and Theorem \ref{majorant} show that $\theta_{12} \equiv 0,$  which means $D_{T^*_{,2}}=0$ and $C(I-zA)^{-1} D_{T^*_{,1}}\equiv0.$ Since $\Sigma$ is observable, one concludes  as above that  $D_{T^*}=0,$ which means that $\Sigma$ is co-isometric. Similar arguments as above show that $\Sigma$ is also minimal. Moreover, co-isometric boundary values of $\theta$ implies that $\Sigma \in  {\mathbf{I}^{*}}^{\kappa}_{\cdot\,0} .$

 (iii) If $\Sigma$ is simple and $\varphi_{\theta}\equiv0$ and $\psi_{\theta}\equiv0,$ arguments used    in the proof of \cite[Theorem 9.4]{Saprikinetal} show that $\Sigma$ is conservative. Minimality of $\Sigma$ is obtained analogously as above. The last assertion is contained in Theorem \ref{consinners}.
\end{proof}

For the classes   $\mathbf{I}^{\kappa}_{0\,\cdot}$ and  ${\mathbf{I}^{*}}^{\kappa}_{\cdot\,0},$ conditions of Theorem \ref{sepontulos} are also necessary.
\begin{proposition}
An isometric controllable (co-isometric observable) system  $\Sigma$ belongs to  $ \mathbf{I}^{\kappa}_{0\,\cdot}$ (${\mathbf{I}^{*}}^{\kappa}_{\cdot\,0}$)  if and only if $\theta_{\Sigma}$ has isometric (co-isometric) boundary values a.e. on $\dT.$
\end{proposition}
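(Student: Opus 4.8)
The plan is to prove the isometric--controllable equivalence; the co-isometric--observable case then follows by applying it to the adjoint system $\Sigma^*$. Indeed, if $\Sigma$ is co-isometric and observable, then $\Sigma^*$ is isometric and controllable with transfer function $\theta_{\Sigma^*}=\theta^{\#}$. Since $\cX_2^{-}$ is $A$-invariant, its orthogonal companion $\cX_2^{+}$ is $A^*$-invariant and thus plays for $\Sigma^*$ the role that $\cX_1^{+}$ plays for $\Sigma$; hence $\Sigma\in{\mathbf{I}^{*}}^{\kappa}_{\cdot\,0}$ exactly when $\Sigma^*\in\mathbf{I}^{\kappa}_{0\,\cdot}$. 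Moreover $\theta$ has co-isometric boundary values a.e.\ precisely when $\theta^{\#}$ has isometric boundary values a.e., so the two statements are genuinely dual.

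Sufficiency is immediate from Theorem \ref{sepontulos}. If $\theta_{\Sigma}$ has isometric boundary values a.e., then $N_{\theta}^2(\zeta)=I_{\cU}-\theta^*(\zeta)\theta(\zeta)=0$ a.e., whence $\varphi_{\theta}\equiv0$ because $\varphi_{\theta}^*\varphi_{\theta}\le N_{\theta}^2$. As $\Sigma$ is controllable with $\varphi_{\theta}\equiv0$, the second assertion of Theorem \ref{sepontulos}(i) gives $\Sigma\in\mathbf{I}^{\kappa}_{0\,\cdot}$ directly, so this direction carries no real content.

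For necessity, suppose $\Sigma\in\mathbf{I}^{\kappa}_{0\,\cdot}$, that is, $\Sigma$ is controllable and isometric with $A\uphar_{\cX_1^+}\in C_{0\,\cdot}$. I would pass to the Julia embedding $\widetilde{\Sigma}$ of $\Sigma$. A conservative embedding preserves controllability, hence simplicity, so $\widetilde{\Sigma}$ is simple and conservative; it leaves the state space and the main operator $A$ unchanged and, by the first identity in \eqref{transemb}, does not alter the pole multiplicity of the transfer function. Therefore the invariant decomposition $\cX=\cX_1^+\oplus\cX_1^-$ of Proposition \ref{invariants} and the operator $A\uphar_{\cX_1^+}$ are the same for $\widetilde{\Sigma}$, whence $\widetilde{\Sigma}\in\mathbf{C}^{\kappa}_{0\,\cdot}$ and Theorem \ref{consinners}(i) yields that $\theta_{\widetilde{\Sigma}}$ has isometric boundary values a.e. Since $T_{\Sigma}$ is isometric, its defect $D_{T_{\Sigma}}$ vanishes and $\sD_{T_{\Sigma}}=\{0\}$, so the Julia embedding enlarges only the incoming space while keeping the outgoing space equal to $\cY$; hence $\theta_{\widetilde{\Sigma}}=\begin{pmatrix}\theta & \theta_{12}\end{pmatrix}$ is a single block row. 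Reading off the $(1,1)$ block of $\theta_{\widetilde{\Sigma}}^*(\zeta)\theta_{\widetilde{\Sigma}}(\zeta)=I$ gives $\theta^*(\zeta)\theta(\zeta)=I_{\cU}$ a.e., so $\theta$ has isometric boundary values.

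The main obstacle, and the sole point at which the isometry of $\Sigma$ rather than mere passivity is used, is this final reduction. For an arbitrary conservative embedding $\theta_{\widetilde{\Sigma}}$ is a genuine $2\times2$ block function, and its isometry only yields $N_{\theta}^2=\theta_{21}^*\theta_{21}$, which need not vanish. The isometry of $T_{\Sigma}$ collapses $\sD_{T_{\Sigma}}$ to $\{0\}$, removing the extra output row and hence the term $\theta_{21}$, which is exactly what turns this inequality into the desired equality. I would therefore foreground the vanishing of $\sD_{T_{\Sigma}}$ (from $D_{T_{\Sigma}}D_{T_{\Sigma}}^*=I-T_{\Sigma}^*T_{\Sigma}=0$ together with the zero-kernel property of $D_{T_{\Sigma}}$ in Theorem \ref{Julia}) as the structural fact driving the whole argument.
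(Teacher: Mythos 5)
Your proof is correct and follows essentially the same route as the paper: sufficiency via Theorem \ref{sepontulos}, and necessity by passing to a conservative (Julia) embedding, invoking the uniqueness in Proposition \ref{invariants} to transfer the class membership, and applying Theorem \ref{consinners}. You also make explicit a step the paper's one-line proof leaves implicit, namely that the isometry of $T_{\Sigma}$ forces $\sD_{T_{\Sigma}}=\{0\}$ so that $\theta_{\widetilde{\Sigma}}$ is a block row and the isometric boundary values actually descend to $\theta$; this is a genuinely needed observation (for a general conservative embedding one only gets $\theta^*\theta+\theta_{21}^*\theta_{21}=I$), and it is good that you foregrounded it.
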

\begin{proof}
  Only the proof of necessity needs to be given. For this, embed $\Sigma$ to  a conservative system $\widetilde{\Sigma}$ with the representation  as in Theorem \ref{reps1} and then apply Theorem \ref{consinners}.
\end{proof}

The existence of a co-isometric observable realization is guaranteed by Theorem \ref{realz}. It is also possible that $\theta \in \SK$ has a co-isometric controllable realization that is neither observable nor conservative.
\begin{example}
  Consider the function in Example \ref{counter} and choose $a$ to be a scalar inner function. Easy calculations show that then $S_l$ is co-inner and the right defect function $\varphi_{S_l} $ of $S_L$ is  not identically zero. Theorem \ref{sepontulos} shows that an
  observable passive realization   $\Sigma_{S_l}$ of $S_l$ is co-isometric and minimal. The property $\varphi_{S_l} \neq 0$ and Lemma \ref{conservs} show that $\Sigma_{S_l}$ cannot be conservative. If $\Sigma_{b^{-1}}$ is a minimal conservative realization of ${b^{-1}},$  Theorem \ref{preserv} shows that  $  \Sigma_{b^{-1}} \circ \Sigma_{S_l}$ is controllable while Example \ref{counter} shows that it is not observable. The product cannot be conservative either,  and thus $S$ has a co-isometric controllable realization.
\end{example}

If the defect functions of $\theta \in \SK$ are zero functions, the results of Theorem \ref{reps1} can be extended.
\begin{proposition} $\Sigma=(A,B,C,D,\cX,\cU,\cY,\kappa)$ be a passive system such that the transfer function $\theta$ of $\Sigma$ belongs to  $\SK.$ Let $\theta=B_l^{-1}\theta_l = \theta_r B_r^{-1}$ be the Kre\u{\i}n--Langer factorizations of $\theta.$ Then the following statements hold:
\begin{itemize}
  \item[{\rm(i)}] If $\varphi_{\theta}\equiv 0, $ then $\Sigma$ can be represented as in the product of the form $$\Sigma=\Sigma_{B_l^{-1}}\circ \Sigma_{\theta_l}, $$ where $\Sigma_{B_l^{-1}}$ and $ \Sigma_{\theta_l}$ and are minimal conservative realization  of $B_l^{-1}$    and passive realization of $\theta_l,$ respectively;
  \item[{\rm(ii)}] If $\psi_{\theta}\equiv 0, $  then $\Sigma$ can be represented as in the product of the form $$\Sigma= \Sigma_{\theta_r}  \circ \Sigma_{B_r^{-1}} $$ where $\Sigma_{B_r^{-1}}$ and $ \Sigma_{\theta_r}$ are minimal conservative realization  of $B_r^{-1}$    and passive realization of $\theta_l,$ respectively;

  \item[{\rm(iii)}] If $\varphi_{\theta}\equiv 0 $  and $\psi_{\theta}\equiv 0,$ then $\Sigma$ can be represented as in the products of the form $$\Sigma=\Sigma_{B_l^{-1}}\circ \Sigma_{\theta_l} =\Sigma_{\theta_r}  \circ \Sigma_{B_r^{-1}}, $$ where $\Sigma_{B_l^{-1}}$ and $\Sigma_{B_r^{-1}}$  are minimal conservative realizations of   $B_l^{-1}$ and $ B_r^{-1},$ respectively, and $\Sigma_{\theta_l}$ and $\Sigma_{\theta_r}$
    are passive realizations of  $ \theta_l$ and $ \theta_r,$ respectively.
\end{itemize}
\end{proposition}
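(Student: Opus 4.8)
The plan is to establish (i) in full, to derive (ii) from it by passing to the adjoint system, and to read off (iii) as the conjunction of (i) and (ii), since the hypothesis of (iii) contains those of (i) and (ii) separately and the asserted factors coincide.

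For (i) I would first produce the factorisation on the controllable part of $\Sigma$, where the defect hypothesis upgrades passivity to isometry, and then extend it across the whole state space. Let $\cX^c$ be the controllable subspace \eqref{cont1}. Since $\theta\in\SK$, Lemma \ref{simp-kar} shows that $(\cX^c)^\perp$ is a Hilbert subspace, so $\cX^c$ is a regular subspace with $\mathrm{ind}_-\cX^c=\kappa$; as $A\cX^c\subset\cX^c$ and $B\cU\subset\cX^c$, the compression $\Sigma^c:=\res_{\cX\to\cX^c}\Sigma$ is a restriction of the contraction $T_\Sigma$ and hence a controllable passive realisation of $\theta$ whose state space has negative index $\kappa$. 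Because $\varphi_\theta\equiv0$, Theorem \ref{sepontulos}(i) forces $\Sigma^c$ to be isometric and minimal, so Theorem \ref{reps1}(iii) applies and yields $\Sigma^c=\Sigma_{B_l^{-1}}\circ\Sigma^{c}_{\theta_l}$ with $\Sigma_{B_l^{-1}}$ a conservative minimal realisation of $B_l^{-1}$ on the $\kappa$-dimensional anti-Hilbert factor and $\Sigma^{c}_{\theta_l}$ an isometric realisation of $\theta_l$. By Proposition \ref{invariants} this anti-Hilbert factor is the unique $A$-invariant maximal negative subspace $\cX_2^-$ of $\cX$, and it lies inside $\cX^c$; correspondingly $\cX=\cX_2^+\oplus\cX_2^-$ with $\cX_2^+=(\cX^c\cap\cX_2^+)\oplus(\cX^c)^\perp$ and $A$ lower-triangular in this splitting.

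The heart of the argument is to extend the first factor from $\cX^c\cap\cX_2^+$ to all of $\cX_2^+$, i.e. to define the observation operator of $\Sigma_{\theta_l}$ on the Hilbert piece $(\cX^c)^\perp$ so that the off-diagonal and output blocks of $T_\Sigma$ still factor through the control column $\bigl(\begin{smallmatrix}B_{B_l^{-1}}\\ D_{B_l^{-1}}\end{smallmatrix}\bigr)$ of $\Sigma_{B_l^{-1}}$, as required by \eqref{prod-oper}. Here I would use that $\bigl(\begin{smallmatrix}A\\ C\end{smallmatrix}\bigr)$ is a bi-contraction that is isometric on the maximal negative subspace $\cX_2^-$. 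Applying the Cauchy--Schwarz inequality to the positive Hermitian form $x\mapsto\langle(I-(\begin{smallmatrix}A\\ C\end{smallmatrix})^*(\begin{smallmatrix}A\\ C\end{smallmatrix}))x,x\rangle$, which vanishes on $\cX_2^-$, gives $\langle(\begin{smallmatrix}A\\ C\end{smallmatrix})\xi,(\begin{smallmatrix}A\\ C\end{smallmatrix})d\rangle=\langle\xi,d\rangle=0$ for $\xi\in\cX_2^-$ and $d\in(\cX^c)^\perp\subset\cX_2^+$. Writing this out with the block forms of $A$ and $C$ shows that the $(\cX_2^-\oplus\cY)$-valued block of $T_\Sigma$ on $(\cX^c)^\perp$ is orthogonal to the range of the main/observation column $\bigl(\begin{smallmatrix}A_{B_l^{-1}}\\ C_{B_l^{-1}}\end{smallmatrix}\bigr)$ of the conservative $\Sigma_{B_l^{-1}}$, hence lies in the range of its isometric control column and so factors through it. This defines the missing part of the observation operator and establishes $T_\Sigma=T_{\Sigma_{B_l^{-1}}\circ\Sigma_{\theta_l}}$ with $\Sigma_{\theta_l}=(A\uphar_{\cX_2^+},P_{\cX_2^+}B,C_{\theta_l},D_{\theta_l};\cX_2^+,\cU,\cY;0)$. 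That $\Sigma_{\theta_l}$ is a passive realisation of $\theta_l$ then follows: its transfer function satisfies $B_l^{-1}\theta_{\Sigma_{\theta_l}}=\theta=B_l^{-1}\theta_l$, and the factorisation \eqref{product2} of $T_\Sigma$ into a unitary block times the block carrying $T_{\Sigma_{\theta_l}}$ shows that $T_{\Sigma_{\theta_l}}$ is a contraction. This range-inclusion step is the one place where real work is needed and is the main obstacle; everything else is bookkeeping.

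Part (ii) follows by duality. The adjoint $\Sigma^*$ is a passive realisation of $\theta^\#$, and since $N_{\theta^\#}^2(\zeta)=M_\theta^2(\bar\zeta)$ the hypothesis $\psi_\theta\equiv0$ is equivalent to $\varphi_{\theta^\#}\equiv0$; moreover $\theta^\#=(B_r^\#)^{-1}\theta_r^\#$ is the left Kre\u{\i}n--Langer factorisation of $\theta^\#$. Applying (i) to $\Sigma^*$ gives $\Sigma^*=\Sigma_{(B_r^\#)^{-1}}\circ\Sigma_{\theta_r^\#}$, and taking adjoints with $(\Sigma_2\circ\Sigma_1)^*=\Sigma_1^*\circ\Sigma_2^*$, together with the facts that the adjoint of a conservative minimal system is conservative minimal and that of a passive system is passive, yields $\Sigma=\Sigma_{\theta_r}\circ\Sigma_{B_r^{-1}}$ with the stated properties. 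Finally, (iii) is immediate: if both $\varphi_\theta\equiv0$ and $\psi_\theta\equiv0$ hold, then (i) and (ii) apply simultaneously and provide the two product representations, with $\Sigma_{B_l^{-1}},\Sigma_{B_r^{-1}}$ conservative minimal and $\Sigma_{\theta_l},\Sigma_{\theta_r}$ passive.
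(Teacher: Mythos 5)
Your proposal is correct and follows essentially the same route as the paper: restrict to the controllable subspace (a passive controllable realization of $\theta$ with state space of negative index $\kappa$, by Lemma \ref{simp-kar}), upgrade it to an isometric minimal system via Theorem \ref{sepontulos}, factor it by Theorem \ref{reps1}(iii), and then pass the product structure through the dilation back to $\cX$. The only differences are cosmetic: your Cauchy--Schwarz/range-orthogonality argument explicitly carries out the extension step that the paper dismisses as ``similar calculations as in Step 1 (ii) of the proof of Theorem \ref{reps1}'', and you obtain (ii) from (i) by duality where the paper proves one case directly and declares the rest analogous.
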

\begin{proof}
Only the proof of (ii) is provided, since the other assertions are obtained analogously.  Suppose that $\psi_{\theta}\equiv 0.$ Lemma \ref{simp-kar} shows that the space $(\cX^c)^\perp$ is a Hilbert space. It follows easily from the identity \eqref{cont1} that $A^* (\cX^c)^\perp\subset (\cX^c)^\perp$ and $B^*(\cX^c)^\perp =\{ 0 \}.$ This implies that the  system operator   can be represented as \begin{equation}\label{projection3}
   T_{\Sigma} =\begin{pmatrix}
                   A_1 & 0 & 0 \\
                  A_2 & A_0 & B_0 \\
                  C_1 & C_0 & D
                \end{pmatrix}: \begin{pmatrix}
                    (\cX^c)^\perp \\ { \cX}^c\\
                    \cU
                  \end{pmatrix}  \to \begin{pmatrix}
                    (\cX^c)^\perp \\ { \cX}^c\\
                    \cY
                  \end{pmatrix}. \end{equation}
 Now easy calculations show that a restriction $\Sigma_0=(A_0,B_0,C_0,D,\cX^c,\cU,\cY,\kappa)$ of $\Sigma$ is controllable and passive, and then according to Theorem \ref{sepontulos}, $\Sigma_0$ is isometric and minimal.  From Theorem \ref{reps1} it follows that $\Sigma_0=\Sigma_{ B_l^{-1}}\circ \Sigma_{ \theta_l} $ and the components have properties introduced in  Theorem \ref{reps1} {\rm(iii)}. The state space $\cX^{c-}$ of $\Sigma_{B_l^{-1}}$ is invariant respect to $A_0.$ Denote the state space of $\Sigma_{ \theta_l}$  by $\cX^{c+}.$     Then $\left(  (\cX^c)^\perp   \oplus \cX^{c+}  \right) \oplus \cX^{c-}$ is a fundamental decomposition of $\cX,$ and
$A \cX^{c-} \subset\cX^{c-}.$  Similar calculations as in the Step 1 (ii) of the  proof of Theorem \ref{reps1} show that
$$  \Sigma= \dil\, \Sigma_0=\dil\left( \Sigma_{ B_l^{-1}}\circ \Sigma_{ \theta_l}\right)  =  \Sigma_{ B_l^{-1}}\circ  \dil\, \Sigma_{ \theta_l},  $$
and this is the desired representation.
\end{proof}

\noindent \textbf{Acknowledgements}\quad This paper will be a part of my forthcoming doctoral thesis. I wish to thank my supervisor Seppo Hassi for helpfull discussion  while preparing this paper.


\begin{thebibliography}{AHS} 
\bibitem{ADRS} 
D.~Alpay, A.~Dijksma, J.~Rovnyak, and H. S. V.~de~Snoo, \emph{Schur
functions, operator colligations, and Pontryagin spaces}, Oper.
Theory Adv. Appl., {96}, Birkh\"auser Verlag, Basel-Boston, 1997.

\bibitem{Ando}
T.~Ando, \emph{De Branges spaces and analytic operator functions}, Division of Applied Mathematics, Research Institute of
Applied Electricity, Hokkaido University, Sapporo, Japan, 1990.












\bibitem{AHS2}
 Yu. M.~Arlinski\u{\i}, S.~Hassi and  H.S.V~de Snoo,
Parametrization of contractive block operator matrices and passive discrete-time systems,
Complex Anal. Oper. Theory 1 (2007), no. 2, 211--233.

\bibitem{Aron}
N.~Aronszajn, Theory of reproducing kernels, Trans. Amer. Math. Soc.
(1950) 68, 337--404.


\bibitem{A}
 D. Z.~Arov, Passive linear steady-state dynamical systems, Sibirsk. Mat. Zh. 20 (1979), no. 2, 211--228  (Russian); English transl. in  Siberian Math. J. 20 (1979), no. 2, 149--162.

\bibitem{Arov}
 D. Z.~Arov, Stable dissipative linear stationary dynamical scattering systems,  J. Operator Theory 2 (1979), no. 1, 95--126 (Russian);  English transl. in  Oper. Theory Adv. Appl., 134, \emph{Interpolation theory, systems theory and related topics (Tel Aviv/Rehovot, 1999)}, 99--136, Birkh{\"a}user, Basel, 2002.

\bibitem{ArKaaP}
D. Z.~Arov, M. A.~Kaashoek, and D. P.~Pik, Minimal and optimal linear
discrete time-invariant dissipative scattering systems. Integr.
Equat. Oper. Theory, 29 (1997), 127--154.



\bibitem{ArKaaP3}
D. Z.~Arov, M. A.~Kaashoek, and D. P.~Pik, The Kalman-Yakubovich-Popov inequality for discrete time systems of infinite dimension, J. Operator Theory 55 (2006), no. 2, 393--438.


\bibitem{ArNu1}
 D. Z.~Arov and M. A.~Nudel'man, A criterion for the unitary similarity of minimal passive systems of scattering with a given transfer function,  Ukra{\"i}n. Mat. Zh. 52 (2000), no. 2, 147--156 (Russian); English
transl. in Ukrainian Math. J. 52 (2000), no. 2, 161--172.

\bibitem{ArNu2}
 D.Z.~Arov and M. A.~Nudel'man, Conditions for the similarity of all minimal passive realizations of a given transfer function (scattering and resistance matrices),   Mat. Sb. 193 (2002), no. 6, 3--24 (Russian); English
transl. in Sb. Math. 193 (2002), no. 5-6, 791--810.

\bibitem{Saprikinetal}
 D. Z.~Arov, J.~Rovnyak and S. M.~Saprikin,
   Linear passive stationary scattering systems with Pontryagin state spaces, Math. Nachr. 279 (2006), no. 13--14, 1396--1424.

\bibitem{SapAr}
D. Z.~Arov and S. M.~Saprikin,  Maximal solutions for embedding problem for a generalized Shur function and optimal dissipative scattering systems with Pontryagin state spaces,
  Methods Funct. Anal. Topology 7 (2001), no. 4, 69--80.

\bibitem{ArSt}
D. Z.~Arov and O. J.~Staffans,
Bi-inner dilations and bi-stable passive scattering realizations of Schur class operator-valued functions,
Integral Equations Operator Theory 62 (2008), no. 1, 29--42.


\bibitem{Azizov}
T. Ya.~Azizov and I. S.~Iokhvidov,
\emph{Foundations of the theory of linear operators in spaces with indefinite metric}, Nauka, Moscow, 1986; English
transl., John Wiley \& Sons Ltd., Chichester, 1989.

\bibitem{BallCohen}
 J. A.~Ball and N.~Cohen,
de Branges-Rovnyak operator models and systems theory: a survey,
\emph{Topics in matrix and operator theory (Rotterdam, 1989)}, 93--136,
Oper. Theory Adv. Appl., 50, Birkh{\"a}user, Basel, 1991.

\bibitem{BGKR}
H.~Bart, , I. Z.~Gohberg, M. A.~Kaashoek and A. C. M.~Ran,
\emph{Factorization of matrix and operator functions: the state space method},
Oper. Theory Adv. Appl., 178, Linear Operators and Linear Systems, Birkh{\"a}user Verlag, Basel, 2008.

\bibitem{Bognar}
 J.~Bogn{\'a}r,
\emph{Indefinite inner product spaces},
Ergebnisse der Mathematik und ihrer Grenzgebiete, Band 78. Springer-Verlag, New York-Heidelberg, 1974.

\bibitem{BrR1}
L.~de Branges and J.~Rovnyak, \emph{Square Summable Power Series}, Holt,
Rinehart and Winston, New-York, 1966.

\bibitem{BrR2}
L.~de Branges and J.~Rovnyak, Appendix on square summable power
series, Canonical models in quantum scattering theory, \emph{Perturbation Theory and its Applications in Quantum Mechanics (Proc. Adv. Sem. Math. Res. Center, U.S. Army, Theoret. Chem. Inst., Univ. of Wisconsin, Madison, Wis., 1965)}, pp. 295--392, Wiley, New York, 1966.


\bibitem{Br1}
M.S.~Brodski\u{\i}, Unitary operator colligations and their characteristic functions, Uspekhi Mat. Nauk 33 (1978), no. 4(202), 141--168, 256  (Russian); English transl. in  Russian Math. Surveys 33 (1978), no. 4,
159--191.


\bibitem{DLS1} 
 A.~Dijksma,  H.~Langer and H. S. V.~de Snoo,
Characteristic functions of unitary operator colligations in $\pi_{\kappa}$-spaces,
\emph{ Operator theory and systems (Amsterdam, 1985)}, 125--194,
  Oper. Theory Adv. Appl., 19, Birkh{\"a}user, Basel, 1986.

\bibitem{DLS2} 
A.~Dijksma,  H.~Langer and H. S. V.~de Snoo,  Unitary colligations in $\Pi_{\kappa}$-spaces,
  characteristic functions and  {\^S}traus extensions,
  Pacific J. Math. 125 (1986), no. 2, 347--362.


\bibitem{rovdrit}
M. A.~Dritschel and J.~Rovnyak,
Operators on indefinite inner product spaces, \emph{Lectures on operator theory and its applications (Waterloo, ON, 1994)}, 141--232,
Fields Inst. Monogr., 3, Amer. Math. Soc., Providence, RI, 1996.








\bibitem{Helton}
J. W.~Helton,
Discrete time systems, operator models, and scattering theory,
J. Functional Analysis 16 (1974), 15--38.





\bibitem{Khanh}
D. C.~Khanh,
($\pm$)-regular factorization of transfer functions and passive scattering systems for cascade coupling,
J. Operator Theory 32 (1994), no. 1, 1--16.


\bibitem{Krein-Langer} 
 M. G.~Kre\u{\i}n and H.~Langer, {\"U}ber die verallgemeinerten Resolventen und die charakteristische Funktion eines isometrischen Operators im Raume $\Pi_\kappa$ (German),\emph{ Hilbert space operators and operator algebras (Proc. Internat. Conf., Tihany, 1970)}, pp. 353--399, Colloq. Math. Soc. J{\'a}nos Bolyai, 5. North-Holland, Amsterdam, 1972.



\bibitem{LangSor}
H.~Langer and P.~Sorjonen, Verallgemeinerte Resolventen hermitescher und isometrischer Operatoren im Pontrjaginraum,
Ann. Acad. Sci. Fenn. Ser. A I No. 561, 1974 (German).



\bibitem{Saprikin1}
S. M.~Saprikin,
 The theory of linear discrete time-invariant dissipative scattering systems with state $\pi_\kappa$-spaces,
 Zap. Nauchn. Sem. S.-Peterburg. Otdel. Mat. Inst. Steklov. (POMI) 282 (2001),
 Issled. po Line\u{\i}n. Oper. i Teor. Funkts. 29, 192--215, 281 (Russian);
 English transl. in J. Math. Sci. (N. Y.) 120 (2004), no. 5, 1752--1765.

\bibitem{Schwartz}
L.~Schwartz, Sous-espaces hilbertiens d'espaces vectoriels topologiques et noyaux associ\'{e}s (noyaux reproduisants), J. Analyse Math. 13 (1964), 115–-256  (French).


\bibitem{Sorjonen} P. Sorjonen, Pontrjaginr{\"a}ume mit einem reproduzierenden Kern, Ann. Acad. Sci. Fenn. Ser. A I Math. No. 594, 1975.

\bibitem{Staffans}
O. J.~Staffans,
 \emph{Well-posed linear systems}, Encyclopedia of Mathematics and its Applications, 103, Cambridge University Press, Cambridge, 2005.

\bibitem{SF}
B.~Sz.-Nagy and C.~Foias, \emph{{Harmonic analysis of operators on
Hilbert space}}, North-Holland, New York, 1970.




\end{thebibliography}
\end{document}